\documentclass[12pt]{article}

\pdfoutput=1

\usepackage{amsfonts}
\usepackage{mathtools}
\usepackage{fullpage}
\usepackage{amsmath}
\usepackage{amsthm}
\usepackage{amssymb}
\usepackage{xcolor}   
\usepackage{enumerate}
\usepackage{euscript}
\usepackage[hyphens]{url}
\usepackage{graphicx}
\usepackage{listings}
\usepackage{esint}
\usepackage{dsfont}
\usepackage{pifont}
\usepackage[english]{babel}
\usepackage[space]{grffile}
\usepackage{hyperref}
\usepackage{tikz}
\usepackage[style=alphabetic, backend=biber]{biblatex}

\addbibresource{ref.bib}
\ExecuteBibliographyOptions{giveninits=true,isbn=false,url=false,doi=false,maxbibnames=99,minalphanames=3,maxalphanames=4}
\renewbibmacro{in:}{}
\AtEveryBibitem{\clearfield{month}}
\usetikzlibrary{matrix,calc}

\theoremstyle{definition}

\newtheorem{definition}{Definition}[section]
\newtheorem{example}[definition]{Example}

\theoremstyle{plain}

\newtheorem{corollary}[definition]{Corollary}
\newtheorem{lemma}[definition]{Lemma}
\newtheorem{theorem}[definition]{Theorem}
\newtheorem{conjecture}[definition]{Conjecture}

\theoremstyle{remark}

\newtheorem{remark}[definition]{Remark}

\begin{document}

\title{Cohomogeneity one 4-dimensional gradient Ricci solitons}
\author{Patrick Donovan}
\date{\today}
\maketitle

\begin{center}
    \textbf{Abstract:}
\end{center}
Simply-connected four-dimensional gradient Ricci solitons that are invariant under a compact cohomogeneity one group action have been studied extensively. However, the special case where the group is $\mathrm{SU}(2)$ (the smallest possible example) has received comparatively little attention. The purpose of this article is to give a comprehensive study of simply-connected $\mathrm{SU}(2)$-invariant expanding and shrinking cohomogeneity one gradient Ricci solitons. The first result is the construction of new $3$-parameter families of complete $\mathrm{SU}(2)$-invariant asymptotically conical expanding gradient Ricci solitons. New shrinking K\"ahler $\mathrm{U}(2)$-invariant gradient Ricci solitons in dimension 4 with orbifold singularities are also constructed, leading to a classification of such metrics when the base space of the orbifold is a simply-connected smooth manifold. Finally, we highlight numerical evidence that all the compact cohomogeneity one shrinking gradient Ricci solitons are known.

\section{Introduction}
A Riemannian metric $g$ on a smooth manifold $M$ is called a Ricci soliton if there exists a smooth vector field $X \in \mathfrak{X}(M)$ and a constant $\lambda \in \mathbb{R}$ such that
\begin{equation}
    \label{eq_soliton}
    \text{Ric}(g) + \frac{1}{2}L_{X}g = \lambda g,
\end{equation}
where $L$ is the Lie derivative. A Ricci soliton  with $\lambda = 0$ in (\refeq{eq_soliton}) is a called a steady soliton and is a solution to the Ricci flow that evolves via pullbacks of diffeomorphisms in the one-parameter family generated by $X$, i.e., $g_t = \phi_t^*(g)$ where $\phi_0 = \text{id}_M$ and $X = \frac{d\phi_t}{dt}$. If instead $\lambda < 0$ ($\lambda > 0$), then the soliton is said to be expanding (shrinking) as the corresponding solution to the Ricci flow with initial metric $g$, expands (shrinks) uniformly while also evolving via a diffeomorphism in the direction of $X$. Ricci solitons may be considered the \textit{geometric} fixed points of the Ricci flow as they form constant solutions to Ricci flow up to rescaling and diffeomorphism. 

Ricci solitons are also generalizations of Einstein manifolds; Einstein metrics trivially solve (\refeq{eq_soliton}) for any Killing field $X$. Einstein manifolds are named as such since a Lorenzian $4$-manifold with constant Ricci curvature is a solution to Einstein's field equations in the special case of no mass. Einstein metrics are also interesting as geometries with constant Ricci curvature, and may be considered candidates for a canonical metric on a smooth manifold. We produce a family of negatively curved Einstein manifolds in this work --- see Theorem \refeq{thm_expanders}.

The Ricci flow is a weakly parabolic heat type flow and the reaction terms in the Ricci flow equation allow curvature to concentrate until singularities arise (see \cite{Hamilton1986} for example). Shrinking gradient Ricci solitons are important in the study of Ricci flow as they commonly appear as singularity models: limits of parabolic dilations around a singularity of a solution to the Ricci flow. In Conjecture \refeq{con_compact_solitons}, we postulate that the shrinking compact gradient Ricci solitons --- solitons whose vector field is the gradient of a smooth function $u \in C^\infty (M)$ called the soliton potential --- are all known if they are invariant under a large symmetry group.

Ricci flow can be resolved through singularities in dimension 3, but it is a central goal in geometric analysis to be able to continue Ricci flow past any singularity in higher dimensions. In dimension 4, a new generic singularity model arises --- conical singularities. Expanding asymptotically conical gradient Ricci solitons can been used to flow past finite time conical singularities appearing in Ricci flow in dimension at least $4$ (see \cite{GianniotisSchulze2018,AngenentKnopf2022,FIK2003}). We show that in the presence of strong symmetries, expanding asymptotically conical gradient Ricci solitons in dimension 4 are abundant --- again see Theorem \refeq{thm_expanders}.

As with Einstein metrics, one of the most prolific methods used to find Ricci solitons is to apply the ansatz of invariance under some large symmetry group. As Ricci solitons are \textit{geometric} fixed points of the Ricci flow, which has strong smoothing properties, one should expect Ricci solitons to have an abundance of symmetry, particularly in low dimensions. For example, expanding asymptotically conical gradient Ricci solitons with positive sectional curvature were shown to have rotational symmetry in \cite{Chodosh2014}. When solving (\refeq{eq_soliton}) in the class of metrics which are invariant under a given compact group action, we may assume that the vector field $X$ is also invariant by the averaging procedure in \cite{DancerWang2011}. Therefore, with this additional assumption, all the terms in the Ricci soliton equation are invariant under the isometry group of $g$. In particular, finding Ricci solitons invariant under the action of a compact group reduces to solving (\refeq{eq_soliton}) over the orbit space of the action, with boundary conditions to ensure smoothness. Constructions without symmetry assumptions are rare in low dimensions. The most notable exceptions in dimension $4$ are found in \cite{Deruelle2016} and \cite{BamlerChen2025}, which prove existence of expanding gradient Ricci solitons asymptotic to a prescribed cone. The former uses a continuity method relying on positive curvature, while the latter formulates a new degree theory for $4$-dimensional asymptotically conical expanding gradient Ricci solitons.

The strongest symmetries one can enforce are homogenous: where $M$ is $G$-equivariantly diffeomorphic to $G/H$ for a Lie group $G$ acting on $M$ with a closed stabilizer subgroup $H = G_p$ for some $p \in M$. As the orbit space is a point, finding complete homogeneous gradient Ricci solitons is an algebraic problem. The complete list of homogeneous gradient Ricci solitons on $4$-dimensional simply-connected manifolds is as follows:
\begin{itemize}
    \item the shrinkers are the obvious metrics on $\mathbb{R}^4$, $\mathbb{S}^2 \times \mathbb{R}^2$, $\mathbb{S}^3 \times \mathbb{R}^1$, $\mathbb{S}^4$, $\mathbb{S}^2 \times \mathbb{S}^2$ and $\mathbb{CP}^2$; 
    \item the translator is the flat Euclidean metric; and
    \item the expanders are the obvious metrics on $\mathbb{R}^4$, $H^2 \times \mathbb{R}^2$, $H^3 \times \mathbb{R}^1$ $H^4$, $H^2 \times H^2$ and $\mathbb{C}H^2$,
\end{itemize}
where $H^n$ is hyperbolic space of dimension $n$ and $\mathbb{C}H^n$ is complex hyperbolic space of real dimension $2n$. This classification is completed via the work of \cite{PetersenWylie2007} and the classification of homogenous $4-$dimensional Einstein metrics in \cite{Jensen1969}. In fact, non-trivial homogenous Ricci solitons are necessarily expanding and not gradient. A classification of these metrics in dimension 4 was completed in \cite{ArroyoLafuente2014}. 

We are therefore interested in Ricci solitons that need not have such strong symmetries. A Riemannian manifold $(M,g)$ is cohomogeneity one if it admits an invariant proper action by a Lie group $G$ such that the orbit space $M/G$ is one dimensional. The maximal symmetry possible for a non-trivial gradient Ricci solitons is cohomogeneity-one. In dimension 2 compact gradient Ricci solitons have constant curvature. More generally, $2$-dimensional gradient Ricci solitons are necessarily rotationally symmetric, allowing the remaining gradient Ricci solitons to be classified in \cite{BernsteinMettler2015}. In dimension 3, the only shrinking gradient Ricci solitons are the Gaussian shrinker on $\mathbb{R}^3$, the round sphere $\mathbb{S}^3$, the round cylinder $\mathbb{S}^2 \times \mathbb{R}$ or a quotient of one of these. Bryant constructed a non-flat steady gradient Ricci soliton in dimension 3, which was later shown to be unique in the class of non-collapsed steady gradient Ricci solitons by Brendle \cite{Brendle2013}. Byrant also constructed a one-parameter family of expanding gradient Ricci solitons on $\mathbb{R}^3$. 
Hence, the $4$-dimensional setting is what we focus on. A cohomogeneity one ansatz reduces (\refeq{eq_soliton}) to a system of ODEs which enables the use of standard ODE techniques. We will only consider $4$-dimensional simply-connected smooth manifolds that admit a cohomogeneity one action by a smooth compact Lie group $G$.

Cohomogeneity one actions, alongside assumptions of further structure, such as K\"ahlerity, have been used to produce multiple families of expanding Ricci solitons. In particular, we revisit the construction of both some $\mathrm{U}(2)$-invariant Einstein manifolds with negative Einstein constant (see \cite{PetersenZhu1995}), and the one-parameter family of $\mathrm{SO}(4)$-invariant expanders found by Bamler (see \cite{CCG2007}). There is an extensive history of gradient K\"ahler Ricci solitons: one-parameter families of $\mathrm{U}(2)$-invariant expanding gradient Ricci solitons were found by Cao, Chave and Valent on $\mathbb{C}^2$ \cite{Cao1997,ChaveValent1996} and by Chave and Valent and by Feldman, Ilmanen and Knopf on $\mathcal{O}(-n)$ for $n \geq 3$ in \cite{FIK2003,ChaveValent1996}. The space $\mathcal{O}(-n)$ is a complex line bundle of $\mathbb{CP}^1$ with twisting number $-n$. Specifically, $\mathcal{O}(-1) = \text{Bl}_1(\mathbb{C}^2)$ is the blow up of $\mathbb{C}^2$ at a point and $\mathcal{O}(-2)$ is the tangent bundle of $\mathbb{CP}^1$. See \cite{Appleton2022} for more details on $\mathcal{O}(-n)$. These families are also included in the solitons listed in Theorem \refeq{thm_expanders} (see Table \refeq{tab_theorem}). 

\renewcommand{\arraystretch}{1.35}
\begin{table}[h!]
\begin{center}
    \begin{tabular}{||c||c|c|c|c||} 
        \hline
        Manifold & One parameter family of Solitons & Einstein & K\"ahler & Source \\ [0.5ex] 
        \hline\hline
        $\mathbb{R}^4$, $\mathcal{O}(-n)$, $n \geq 1$ & $\mathrm{U}(2)$-invariant & \ding{51} & \ding{55} & \cite{PagePope1987,PetersenZhu1995}\\
        \hline
        $\mathbb{R}^4$ & Bamler's $\mathrm{SO}(4)$-invariant & \ding{55} & \ding{55} & \cite{CCG2007} \\
        \hline
        $\mathbb{R}^4$ & $\mathrm{U}(2)$-invariant & \ding{55} & \ding{51} & \cite{Cao1997} \\
        \hline
        $\mathcal{O}(-n)$, $n \geq 3$ & $\mathrm{U}(2)$-invariant & \ding{55} & \ding{51} & \cite{FIK2003}\\
        \hline
    \end{tabular}
    \caption{Notable Einstein and non-Einstein expanding solitons included in the solitons of Theorem \refeq{thm_expanders}}\label{tab_theorem}
\end{center}
\end{table}
\renewcommand{\arraystretch}{1}

\pagebreak

In this work, we are primarily interested in $\mathrm{SU}(2)$-invariant metrics. $\mathrm{SU}(2)$, and its quotient $\mathrm{SO}(3)$, are the smallest compact Lie groups that can be found in cohomogeneity one group actions on $4$-manifolds. Consequently, $\mathrm{SU}(2)$ actions are less studied when searching for Ricci solitons as it is often more challenging than other cases (notably when compared to $\mathrm{U}(2)$, $\mathrm{SO}(4)$ and $\mathrm{SO}(3) \times \mathrm{SO}(2)$ actions). 

The study of $\mathrm{SU}(2)$ actions leads us to find the $3$-parameter families of cohomogeneity-one expanding asymptotically conical (see Definition 1.2 of \cite{Deruelle2016}) gradient Ricci solitons in dimension 4. The previous largest families of cohomogeneity-one gradient Ricci solitons were both $2$-parameter families \cite{GastelKronz2004,NienhausWink2024}. The families we construct lie on $\mathbb{R}^4$ and $\mathcal{O}(-n)$ for any $n \in \mathbb{Z}_{>0}$.

\begin{theorem}\label{thm_expanders}
    There exists a $2$-parameter family of complete $\mathrm{U}(2)$-invariant expanding gradient Ricci solitons containing a $1$-parameter family of Einstein metrics on $\mathcal{O}(-n)$ for all $n\in \mathbb{Z}_{>0}$. Moreover, there exists a $3$-parameter family of complete $\mathrm{SU}(2)$-invariant expanding gradient Ricci solitons containing a $2$-parameter family of Einstein metrics on $\mathbb{R}^4$ and $\mathcal{O}(-n)$ if $n=1,2$ or $4$. The Einstein metrics are categorized by asymptotically hyperbolic volume growth and all non-Einstein solitons are asymptotically conical.
\end{theorem}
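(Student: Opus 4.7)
The plan is to reduce the soliton equation (\refeq{eq_soliton}) to an ODE system via the cohomogeneity-one ansatz, parametrize smooth initial data at the singular orbit, and extend solutions globally using the Lyapunov structure of gradient expanders combined with an asymptotic analysis at infinity.

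First, on the principal part of $M$, write the $\mathrm{SU}(2)$-invariant metric as
$$g = dt^2 + a(t)^2 \sigma_1^2 + b(t)^2 \sigma_2^2 + c(t)^2 \sigma_3^2,$$
with $\sigma_1, \sigma_2, \sigma_3$ the standard left-invariant coframe on $\mathrm{SU}(2)$, and take $X = u'(t)\,\partial_t$ for a soliton potential $u$. Substituting into (\refeq{eq_soliton}) produces a second-order nonlinear ODE system in $(a, b, c, u)$, together with a first-order constraint coming from the contracted Bianchi identity. The $\mathrm{U}(2)$-invariant sub-case is the reduction $b = c$, yielding a system in $(a, b, u)$.

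Second, analyze smoothness at the singular orbit $t = 0$. On $\mathcal{O}(-n)$ the singular orbit is a $\mathbb{CP}^1$, forcing $a(0) = 0$, $a'(0) = n$, $b(0) = c(0) = R > 0$, and $u'(0) = 0$; on $\mathbb{R}^4$ the singular orbit collapses to a point, with $a, b, c$ all vanishing linearly with unit slope. The restriction $n \in \{1, 2, 4\}$ in the $\mathrm{SU}(2)$ case is forced by the compatibility of the $\mathrm{U}(1)$ singular stabilizer with the isotropy representation on the normal fiber of the $\mathbb{CP}^1$. A Frobenius-type expansion at the singular ODE point shows that the smooth local solutions are parametrized by two parameters in the $\mathrm{U}(2)$ case (namely $R$ and $u(0)$, after normalizing $\lambda < 0$) and by three parameters in the $\mathrm{SU}(2)$ case, the extra freedom arising from decoupling the next-order Taylor coefficients of $b$ and $c$ at the singular orbit.

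Third, extend each local solution globally and characterize its asymptotics. For expanding solitons the Hamilton-type identity
$$R + |\nabla u|^2 - 2\lambda u = \text{const}$$
combined with the sign of $\lambda$ provides a natural Lyapunov quantity, from which a priori bounds on $u$ and on the principal curvatures $a'/a, b'/b, c'/c$ follow; these rule out collapse or blow-up and yield long-time existence on $[0, \infty)$ with $a, b, c > 0$. Introducing the rescaled variables $\tilde a = a/t$, $\tilde b = b/t$, $\tilde c = c/t$ and reparameterizing time, the resulting autonomous dynamical system has hyperbolic fixed points whose unstable manifolds parametrize asymptotically conical ends; each family member flows into such a fixed point, proving asymptotic conicality. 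The Einstein subfamily is cut out by the additional condition that $u$ is constant, a codimension-one constraint that gives the stated parameter counts; asymptotically hyperbolic volume growth for these Einstein metrics follows from the classical theory of negatively curved Einstein cohomogeneity-one metrics as in \cite{PetersenZhu1995}.

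The main obstacle is the global extension: preventing collapse or blow-up of the coupled nonlinear system on all of $[0, \infty)$ is delicate because the equations strongly couple $a, b, c$, and the only a priori control comes from the Lyapunov identity above together with the sign of $\lambda$. Once global existence is secured, the parameter counts follow directly from the Frobenius analysis, and the stratification between Einstein and non-Einstein solitons is given by the codimension-one subset where the potential is constant.
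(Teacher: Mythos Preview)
Your setup through the Frobenius analysis is broadly on track, but the global extension step contains a genuine gap. You claim that Hamilton's identity $R + |\nabla u|^2 - 2\lambda u = \mathrm{const}$ serves as a Lyapunov quantity yielding a priori bounds on the shape operators $a'/a,\,b'/b,\,c'/c$ that rule out finite-time blow-up. This is not substantiated: for $\lambda<0$ the term $-2\lambda u$ is positive and unbounded, so the identity does not by itself control $|\nabla u|$ or the curvature, and in fact not every choice of initial parameters produces a complete metric (the paper is explicit that the constructed families are only \emph{open} subsets of the parameter space, not all of it). Your argument, as written, would prove too much.

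The paper's route is quite different and does not attempt uniform a priori estimates. It first constructs explicit one-parameter families of complete solutions (the degenerate $\beta=0$ solutions on $\mathcal{O}(-n)$, the $\mathrm{SO}(4)$-invariant solutions on $\mathbb{R}^4$, and the known K\"ahler and Einstein families) via direct ODE comparison arguments, and then shows that the set of parameters whose trajectories reach the relevant asymptotic fixed point is open by analyzing the stability of that fixed point. For the Einstein case the fixed point is hyperbolic (all eigenvalues negative) and openness is immediate; for the non-Einstein case the fixed point is \emph{not} hyperbolic---it has a four-dimensional center direction---and the paper carries out a genuine center manifold reduction, computing the dynamics on the center manifold to show asymptotic stability. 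Your description of the asymptotic fixed points as ``hyperbolic'' misses this, and it is exactly the delicate step. As a minor point, $u(0)$ is not a genuine parameter (it is an additive constant in the potential); the two $\mathrm{U}(2)$-invariant parameters are essentially $f_2(0)$ and $u''(0)$.
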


\begin{remark}
    The non-Einstein and Einstein families of Theorem \refeq{thm_expanders} are each open sets in their parameter space of (not necessarily complete) $\mathrm{SU}(2)$-invariant non-Einstein Ricci solitons or $\mathrm{SU}(2)$-invariant Einstein metrics on $\mathbb{R}^4$ and $\mathcal{O}(-n)$. 
\end{remark}

\begin{remark}
    The parameter space of $\mathrm{SU}(2)$-invariant (shrinking and expanding) solitons is three dimensional on $\mathbb{R}^4$ and $\mathcal{O}(-n)$ if $n \in {1,2,4}$. $\mathrm{SU}(2)$-invariant metrics on $\mathcal{O}(-n)$ for $n \not \in \{1,2,4\}$ are necessarily $\mathrm{U}(2)$-invariant and hence the parameter space of invariant solitons is $2$-dimensional. Furthermore, the parameter space of Einstein metrics within gradient Ricci solitons is always codimension 1.
\end{remark}

\begin{remark}
    As well as the metrics from Table \refeq{tab_theorem}, there are also notable individual metrics such as the hyperbolic metric and the Gaussian expander that are included in Theorem \refeq{thm_expanders}. On the other hand, the complex hyperbolic metric on $\mathbb{C}H^2$ and the $\mathrm{U}(2)$-invariant K\"ahler-Einstein metrics on $\mathcal{O}(-n)$ for $n\geq 3$ \cite{GibbonsPope1979,Pedersen1985} are found on the boundary of this family. Further perturbation results may be used to construct one-parameter families of complete Einstein metrics that also lie within the boundary of the families constructed here. Finally, we also note that the one-parameter family of $\mathrm{SU}(2)$-invariant K\"ahler Einstein metrics on $\mathcal{O}(-2)$ found in \cite{DancerStrachan1994} are numerically shown to be on the boundary our space of metrics too.
\end{remark}

There are many other examples of cohomogeneity one expanding solitons as in \cite{Thompson2024,Wink2019,Wink2024} and many others we do not list here. Expanding gradient Ricci solitons may also be found by considering multiply warped metrics --- reducing (\refeq{eq_soliton}) to an ODE without an overt use of symmetry groups. Multiply warped metrics over the product of a ray, a round sphere and a (product of) Einstein manifold(s) have yielded many examples as in \cite{GastelKronz2004}, which was generalized in \cite{BDGW2015} and further again in \cite{NienhausWink2024}. These multiply warped expanders are typically cohomogeneity one, particularly in low dimensions, and they include a comprehensive analysis of the $\mathrm{SO}(3)\times \mathrm{SO}(2)$-invariant expanders on $\mathbb{S}^2 \times \mathbb{R}^2$ and $\mathbb{S}^1 \times \mathbb{R}^3$. Theorem \refeq{thm_expanders} treats expanders for the remaining irreducible effective compact actions on simply-connected smooth manifolds with a non-principal orbit.

We then move into the exploration of non-expanding solitons. We note that the steady soliton case is well-studied, particularly as the system of ODEs in the cohomogeneity one case reduces a dimension as in \cite{Buttsworth2025}. A list of known cohomogeneity-one steady gradient Ricci solitons includes:
\begin{itemize}
    \item The $\mathrm{SO}(3) \times \mathrm{SO}(2)$-invariant metrics on $\mathbb{S}^2 \times \mathbb{R}^2$ \cite{Ivey1994};
    \item the Euclidean metric, the $\mathrm{SO}(4)$-invariant Bryant soliton, the $\mathrm{U}(2)$-invariant Taub-NUT metric and the $\mathrm{U}(2)$-invariant K\"ahler soliton on $\mathbb{R}^4$ \cite{Cao1996};
    \item The $\mathrm{U}(2)$-invariant Taub-bolt metric on $\mathcal{O}(-1)$;
    \item The $\mathrm{U}(2)$-invariant Hyper-K\"ahler Eguchi-Hanson metric and the $\mathrm{U}(2)$-invariant K\"ahler metric on $\mathcal{O}(-2)$ \cite{Cao1996,ChaveValent1996};
    \item Appleton's non-collapsed $\mathrm{U}(2)$-invariant metric on $\mathcal{O}(-n)$ for each $n\geq 3$ \cite{Appleton2022}; and 
    \item A one-parameter family of $\mathrm{SU}(2)$-invariant metrics found by perturbing the Appleton soliton on $\mathcal{O}(-4)$ \cite{Buttsworth2025}.
\end{itemize}
The perturbative result generating a one-parameter family of $\mathrm{SU}(2)$-invariant metrics on $\mathcal{O}(-4)$ inspires our study of expanders.

This leaves the shrinking case --- we exclusively study the compact case. It should be noted that, in the 4 dimensional K\"ahler setting, all shrinking gradient Ricci solitons are known, only recently classified in \cite{BCCD2024} where a Ricci soliton was constructed on $\text{Bl}_1(\mathbb{C} \times \mathbb{CP}^1)$ with the use of a cohomogeneity two toric symmetry. 

Compact 4-dimensional solitons are challenging to produce; cohomogeneity one assumptions have only yielded two new simply-connected constructions --- both of which had additional structure. First, Page was able to find a new $\mathrm{U}(2)$-invariant Einstein metric with positive scalar curvature on $\text{Bl}_1(\mathbb{CP}^2) = \mathbb{CP}^2 \# \overline{\mathbb{CP}^2}$ \cite{Page1978}. Koiso and later Cao were also able to find a gradient shrinking Ricci soliton on $\text{Bl}_1(\mathbb{CP}^2)$ in the K\"ahler setting, also with $\mathrm{U}(2)$-invariance \cite{Koiso1990,Cao1996}.

All cohomogeneity one proper actions on a compact simply-connected $4$-manifolds are known, described in \cite{Hoelscher2008} for example, and often reduce to an $\mathrm{SU}(2)$-action (see Table \refeq{tab_compact}). One may hope that, as in the K\"ahler setting, a classification of compact cohomogeneity one shrinkers would be attainable, however the Ricci solitons invariant under such actions are rare and few results have alluded to their uniqueness. The cohomogeneity one actions on simply-connected compact $4$-manifolds that do not reduce to an $\mathrm{SU}(2)$-action are an extension of a cohomogeneity two toric action, on which the most recent relevant uniqueness result applies: simply-connected cohomogeneity two toric Einstein manifolds with non-negative sectional curvature are exactly the Fubini--Study metric, the round metric or the direct product of round metrics (of the same radii) proven in \cite{Liu2023}. We conjecture that the simply-connected $4$-dimensional compact solitons are all known.

\begin{conjecture}\label{con_compact_solitons}
    If a compact simply-connected $4$-dimensional Ricci soliton admits an invariant cohomogeneity one action, then it is isometric to a rescaling of one of the following:
    \begin{itemize}
        \item the Koiso--Cao metric;
        \item the Page metric;
        \item the Fubini--Study metric on $\mathbb{CP}^2$;
        \item the round metric on $\mathbb{S}^4$; or 
        \item the direct product of round metrics on $\mathbb{S}^2 \times \mathbb{S}^2$ with the same radii.
    \end{itemize}
\end{conjecture}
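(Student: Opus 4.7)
The plan is to combine the Hoelscher classification of cohomogeneity one actions on compact simply-connected $4$-manifolds with a case-by-case reduction of the shrinking soliton equation to a boundary value problem on the one-dimensional orbit space, and then to rule out all non-listed solutions. First I would split the admissible actions into two families: those that extend a cohomogeneity two toric action by a circle (containing the sphere products and Hirzebruch-type examples), and those whose principal orbit is $\mathrm{SU}(2)$ or $\mathrm{SO}(3)$ (absorbing all $\mathrm{U}(2)$- and $\mathrm{SO}(4)$-invariant metrics as well as the genuinely rank-one case).

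For the toric-extended family I would argue that the additional circle symmetry, together with the topology dictated by the orbit diagram, either forces the metric to be K\"ahler --- so that the recent complete classification of compact four-dimensional K\"ahler shrinkers applies and yields only the Koiso--Cao and Fubini--Study examples --- or forces a local product structure that splits off a two-dimensional constant-curvature factor, picking out round $\mathbb{S}^4$ and $\mathbb{S}^2 \times \mathbb{S}^2$. For the genuinely $\mathrm{SU}(2)$-cohomogeneity one case I would diagonalise the principal orbit metric against a fixed bi-invariant form on $\mathrm{SU}(2)$ and write the soliton equations as an autonomous real analytic system in three metric functions $a,b,c\colon[0,L]\to(0,\infty)$, together with the potential $u$ and the constant $\lambda$. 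The Eschenburg--Wang smooth extension conditions at each singular orbit form explicit polynomial varieties in the Taylor data, so the problem becomes a shooting problem between two finite-dimensional target sets, and a transversality count predicts at most finitely many solutions; each listed metric can then be verified to sit as a transverse intersection by a direct Jacobi field computation.

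The main obstacle is converting this shooting picture into a rigorous exclusion of all non-listed solutions. The coupled nonlinear system admits no useful maximum principle, sectional curvature changes sign along orbits in the compact case, and no obvious global Lyapunov function or phase-plane monotonicity governs the full three-axis flow. The numerical evidence highlighted in the abstract is presumably produced by shooting in exactly this framework; upgrading it to a rigorous proof would most plausibly proceed via interval-arithmetic verified ODE integration that encloses the flow away from the enumerated solitons, supplemented by careful blow-up analysis at the singular orbits to control the Taylor coefficients uniformly. Carrying out this rigorous enclosure uniformly over the finite-dimensional shooting parameter space, and simultaneously excluding degenerate limits where one of $a,b,c$ collapses in the interior, is where the genuine difficulty of the conjecture lies.
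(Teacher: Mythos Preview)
The statement you are attempting to prove is labelled a \emph{Conjecture} in the paper, and the paper does not prove it. Section~5 provides only numerical evidence (via the $SOL$ function and the figures in the appendix) together with partial analytic support: the B\"ohm--Wilking and Brendle--Schoen results cited for special curvature regimes, Buttsworth's numerical study of the $\mathrm{SO}(3)\times\mathrm{SO}(2)$ case, and the K\"ahler orbifold classification of Theorem~1.6. There is no claimed proof to compare your proposal against.

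Your outline contains a substantive unjustified step in the treatment of what you call the toric-extended family. You assert that the extra circle symmetry ``either forces the metric to be K\"ahler \dots\ or forces a local product structure''. Neither alternative follows: the $\mathrm{SO}(3)\times\mathrm{SO}(2)$-invariant metrics on $\mathbb{S}^4$ and $\mathbb{S}^2\times\mathbb{S}^2$ are genuinely doubly warped, need not be K\"ahler, and need not split as local products. The paper handles that family by pointing to existing numerical work rather than by any structural dichotomy, precisely because no such dichotomy is available. Your claim here would, if true, already resolve a nontrivial chunk of the conjecture on its own.

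For the $\mathrm{SU}(2)$ case your shooting framework is essentially the same one the paper sets up in \S3 and exploits numerically in \S5.2, and you correctly identify the genuine obstruction: there is no known monotone quantity or Lyapunov function controlling the full triaxial system, so turning the finite-dimensional transversality picture into a rigorous exclusion would require something like certified ODE enclosure. The paper does not attempt this and leaves the statement open; your proposal is an honest sketch of the difficulty rather than a proof.
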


In the process of justifying Conjecture \refeq{con_compact_solitons}, we construct and classify the $\mathrm{U}(2)$-invariant gradient K\"ahler Shrinking Ricci soliton orbifolds whose base space is a smooth manifold. 

\begin{theorem}
    \label{thm_Kahlerclassification}
    Let $M$ be a $4$-dimensional simply-connected smooth manifold. If $g$ is a complete shrinking gradient K\"ahler Ricci soliton on $M$, invariant under cohomogeneity one action by a subgroup of $I(M)$ isomorphic to a finite quotient of $\mathrm{U}(2)$ such that the subaction by a finite quotient of $\mathrm{SU}(2)$ is also cohomogeneity one, potentially with orbifold singularities along non-principal orbits, then it is isometric to a rescaling of one of the following:
    \begin{enumerate}
        \item The shrinking Gaussian soliton on $\mathbb{C}^2$;
        \item The $\mathrm{U}(2)$-invariant shrinking gradient K\"ahler Ricci soliton on $\textup{Bl}_1(\mathbb{CP}^2)$ described by a positive odd integer $n$ and distinct ordered cone angles $2\pi/q_1<4\pi/n$ and $2\pi/q_2$;
        \item The $\mathrm{U}(2)$-invariant shrinking gradient K\"ahler Ricci soliton on $\mathbb{S}^2 \times \mathbb{S}^2$ described by a positive even integer $n$ and distinct ordered cone angles $2\pi/q_1< 4 \pi/n$ and $2\pi/q_2$;
        \item The $\mathrm{U}(2)$-invariant shrinking gradient K\"ahler Ricci soliton with cone angle $2\pi/q$ on $\mathbb{CP}^2$; or
        \item The $\mathrm{U}(2)$-invariant shrinking gradient K\"ahler Ricci soliton with cone angle ${2\pi}/{q}< {4 \pi}/{n}$ on $\mathcal{O}(-n)$ for each $n \in \mathbb{Z}_{>0}$.
    \end{enumerate}
\end{theorem}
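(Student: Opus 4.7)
The plan is to reduce the Kähler-Ricci soliton equation to an ODE on the one-dimensional orbit space using the Calabi/Koiso-Cao ansatz for $\mathrm{U}(2)$-invariant Kähler metrics, solve the ODE explicitly, and then enumerate complete solutions by analyzing the possible boundary behaviors at the endpoints of the orbit interval.

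First I would set up the ansatz. Under the hypothesis, the principal orbit is a quotient of $S^{3}$ and the $\mathrm{U}(2)$-action carries a Hamiltonian $\mathrm{U}(1)$-subaction whose moment map $x$ identifies the orbit space with an interval $I \subset \mathbb{R}$. The whole Kähler metric is then encoded in a positive profile function $\phi(x)$ on $I$ together with an integer $n$ recording the $\mathrm{U}(1)$-weight on the $\mathbb{CP}^{1}$-factor of each principal orbit. A direct computation parallel to those of \cite{Koiso1990,Cao1996,FIK2003} shows that the shrinking soliton equation $\rho = \lambda\omega - i\partial\bar\partial u$ with $u = u(x)$ reduces to a linear first-order ODE for $\phi$. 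For fixed $n$, $\lambda$ and endpoints of $I$, the soliton is determined (up to rescaling) by a unique solution of this ODE.

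Next I would classify the admissible boundary conditions. A finite endpoint $x_{0}$ of $I$ corresponds to a singular orbit, which here is either a point (collapse of all of $S^{3}$) or a $2$-sphere (collapse of the Hopf circle only). Smooth or orbifold closure at such an endpoint is equivalent to $\phi(x_{0}) = 0$ together with $|\phi'(x_{0})| = 2$ in the smooth case or $|\phi'(x_{0})| = 2/q$ in the orbifold case of cone angle $2\pi/q$. A non-finite endpoint corresponds to a non-compact end with Gaussian-type decay of $\phi$. Positivity of $\phi$ on the interior of $I$, needed for $g$ to be Riemannian, forces the inequalities $2\pi/q_{i} < 4\pi/n$ appearing in the theorem statement. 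With the ODE and endpoint types in hand, the classification proceeds by case analysis on the pair of boundary behaviors. Labelling the three types $\textup{P}$ (point), $(\textup{S}^{2},q)$, and $\infty$, the combinations yielding complete solutions match the five cases of the theorem exactly: $\textup{P}$-$\infty$ gives the Gaussian on $\mathbb{C}^{2}$; $(\textup{S}^{2},q)$-$\infty$ gives the family on $\mathcal{O}(-n)$; $\textup{P}$-$(\textup{S}^{2},q)$ gives the family on $\mathbb{CP}^{2}$; and $(\textup{S}^{2},q_{1})$-$(\textup{S}^{2},q_{2})$ gives either $\textup{Bl}_{1}(\mathbb{CP}^{2})$ or $\mathbb{S}^{2}\times\mathbb{S}^{2}$ according to whether $n$ is odd or even, as these exhaust the oriented $S^{2}$-bundles over $S^{2}$. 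The combination $\textup{P}$-$\textup{P}$ produces the topology of $S^{4}$, which admits no complex structure and is excluded; $\infty$-$\infty$ is incompatible with completeness of a shrinking soliton. Simple-connectedness of the base rules out further lens-space quotients of the principal orbits, so the five families exhaust the classification.

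The main obstacle is the existence and uniqueness statement inside each family: given $n$ and the cone-angle denominators $q_{1}, q_{2}$ satisfying the stated inequalities, I must produce a unique $\lambda > 0$ and interval $(\alpha, \beta)$ so that the explicit ODE solution $\phi$ is strictly positive on $(\alpha, \beta)$ and attains the prescribed vanishing and derivative conditions at both endpoints. I would handle this by regarding the endpoint derivatives as functions of $(\alpha, \beta, \lambda)$, quotienting by the scaling symmetry, and showing that the resulting map onto the space of admissible cone-angle data is a global diffeomorphism onto the region cut out by $2\pi/q_{i} < 4\pi/n$. The subtle step is verifying a posteriori that $\phi > 0$ on the open interval once the endpoint data is matched; this follows from a maximum-principle argument that exploits the linear first-order structure of the ODE once the signs of $\phi'$ at the endpoints are correctly ordered, and the inequality $2\pi/q_{1} < 4\pi/n$ is precisely the threshold separating the regime where $\phi$ develops an interior zero from the regime where it remains positive.
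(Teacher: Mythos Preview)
Your overall strategy --- reduce to a single ODE via the K\"ahler ansatz, then enumerate complete solutions by the combinatorics of endpoint types --- matches the paper's, and your case-by-case pairing of boundary types with the five families in the statement is correct. The moment-map/profile-function formulation you use is equivalent to the paper's geodesic-parameter formulation (the paper works with $f(t) = f_2(t) = f_3(t)$, $f_1 = \pm ff'$, $u' = Cff'$, and derives a first-order equation for $(f')^2$ as an explicit function of $f^2$), so that difference is cosmetic.

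Where your proposal falls short is the existence-and-uniqueness step, which is the actual content of the proof. You propose to show that the map from $(\alpha,\beta,\lambda)$ modulo scaling to cone-angle data is a global diffeomorphism onto the correct region, and to verify interior positivity of $\phi$ by a ``maximum-principle argument that exploits the linear first-order structure.'' Neither is spelled out, and the paper's treatment shows why this is not soft: after fixing $\lambda = 1$ and imposing the closing condition at one end, the closing condition at the other end becomes a transcendental equation $h(C) = 0$ in the soliton constant $C$, and one must prove this $h$ has exactly one nonzero root (or none, according to the case). The paper does this by computing $h(0)=h'(0)=h''(0)=0$ and the sign of $h'''(0)$, using the intermediate value theorem for existence, and then --- following Cao's technique --- expanding $h$ or $e^{\cdots}h$ as a power series and showing the coefficients change sign at most once to force uniqueness. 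This sign-change analysis is delicate and case-dependent (it is done separately for $M_n$, for $\mathbb{CP}^2$, and for $\mathcal{O}(-n)$); a soft diffeomorphism argument would have to reproduce it in some form, and you have not indicated how. Likewise, interior positivity in the paper comes from writing $(f')^2 = e^{\cdots}\bar H(f^2)/(C^3 f^4)$ and checking directly that $\bar H$ has no zeros on the relevant interval via its derivative --- an explicit computation, not a maximum principle.

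Two smaller points. Only the cone angle at the end where $f$ is smaller is constrained by $2\pi/q < 4\pi/n$ (this is exactly the condition $f(0)^2 = 4 - 2n/q > 0$); the other cone angle is unconstrained, so your statement that positivity forces the inequalities for both $q_i$ is too strong and does not match the theorem. And the paper's exclusion of the $\textup{P}$--$\textup{P}$ case is more elementary than yours: $f$ is strictly monotone and so cannot vanish at both ends, independently of any facts about complex structures on $S^4$.
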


\begin{remark}
    We construct the metrics in Theorem \refeq{thm_Kahlerclassification} for all $q$ satisfying the inequalities in Theorem \refeq{thm_Kahlerclassification}, even when $q$ is not a positive integer (and hence the metric is not an orbifold). 
\end{remark}

We review the cohomogeneity one construction of metrics, described in \cite{DancerWang2011} and \cite{Buzano2011} among others, in \S 2 and \S 3 with a focus on $\mathrm{SU}(2)$-invariant metrics. We also produce a classification of the cohomogeneity one simply-connected non-compact spaces in \S 2. We then introduce the boundary value problem in \S 3 and follow \cite{Buttsworth2025} to establish short-time existence of solutions for each boundary condition. We then prove Theorem \refeq{thm_expanders} using centre manifold analysis in \S 4. Numerical and analytical evidence for Conjecture \refeq{con_compact_solitons} is highlighted in \S 5 with additional figures in the appendix. Theorem \refeq{thm_Kahlerclassification} is proven in \S5.3, and is used to provide further evidence for Conjecture \refeq{con_compact_solitons}. 

\subsection*{Acknowledgements}
The author would like to thank his Masters supervisors at the University of New South Wales: Timothy Buttsworth and Michael Cowling. This work would not have been possible without the the many useful insights and fruitful ideas of Timothy Buttsworth. The author thanks the participants of the ``Geometric Analysis and Symmetries'' workshop, held at MATRIX in February 2025, for their interesting questions that shaped this paper. The University of New South Wales provided finical contributions to the author.

\section{Cohomogeneity one group actions}

The purpose of this section is to review the classification of irreducible almost effective cohomogeneity one actions on simply-connected compact $4$-manifolds and extend it to the non-compact case. The classification of cohomogeneity one actions is achievable by two key results we list below. Firstly, Theorem \refeq{thm_1}, which is a consequence of Kleiner's lemma. 

\begin{theorem}
\label{thm_1}
    If a Lie group acts properly on a connected smooth manifold $M$, then the orbit space contains a connected open dense subset of regular orbits.
\end{theorem}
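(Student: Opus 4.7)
My plan is to reduce to the isometric setting, apply the slice theorem to get local models, and then combine the classical principal orbit theorem with Kleiner's connectedness argument.

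First, I would exploit the properness hypothesis to endow $M$ with a $G$-invariant Riemannian metric. Since the action is proper, each isotropy group $G_p$ is compact, and the standard averaging procedure (using an exhaustion of $M$ by $G$-invariant neighborhoods together with a partition of unity adapted to a slice at each orbit) produces such a metric. From this point on, $G$ acts by isometries, so the orbit space $M/G$ inherits a natural length-space structure, which will be crucial for the connectedness argument.

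Next, I would invoke Palais's slice theorem: for each $p \in M$, there is a $G_p$-invariant linear complement $V_p \subset T_pM$ of the tangent space to the orbit, and a $G$-equivariant diffeomorphism from a $G$-neighborhood of $G\cdot p$ onto the twisted product $G \times_{G_p} V_p$. This local model immediately produces a partial order on orbit types (via subconjugacy of isotropies) and shows that passing to a nearby orbit can only shrink the isotropy subgroup up to conjugation. A Baire category / Zorn-type argument on the tower of isotropy types --- each step strictly decreasing dimension or number of components, which must terminate because $G_p$ is compact and the action is proper --- gives the existence of a principal isotropy type $[H]$. The slice theorem then shows that the set $M_{(H)}$ of points of type $[H]$ is open in $M$, and a density argument, again using the slice model, shows that $M_{(H)}$ is dense.

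The remaining and most delicate point is connectedness of $M_{(H)}/G$, which is where I would appeal to Kleiner's lemma. The idea is to work in the metric orbit space: given two regular points $\bar p, \bar q \in M_{(H)}/G$, join them by a minimising geodesic $\gamma$ in $M/G$ (which exists because $M/G$ is a connected length space inheriting completeness of $M$ on each compact segment). If $\gamma$ met the singular stratum at an interior point, the slice representation there would force the orbit space around that point to look like a cone over the quotient of a sphere by a non-transitive compact group, whose regular part is path-connected; one then perturbs $\gamma$ into this regular part, contradicting minimality or reducing the number of singular crossings. The main obstacle in writing this out carefully is precisely this perturbation step: it requires knowing that the space of directions at each singular point in $M/G$ has connected regular locus, which itself is a cohomogeneity argument one dimension lower. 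I would therefore set up the argument by induction on $\dim M$, with the base case trivial, and invoke Kleiner's lemma in its Alexandrov-geometric form (as stated, e.g., in the references cited later in \S 2) rather than reproving it from scratch.
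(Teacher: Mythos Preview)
The paper does not actually prove Theorem~\ref{thm_1}; it merely states the result and attributes it as ``a consequence of Kleiner's lemma,'' treating it as a standard fact to be quoted. So there is no paper proof to compare against in any detailed sense.

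Your outline is consistent with that attribution and is a reasonable sketch of the standard argument: the principal orbit theorem gives openness and density of the regular stratum, and Kleiner's lemma supplies connectedness of its image in $M/G$. A couple of minor points. First, the existence, openness, and density of the principal orbit type do not require any Baire or Zorn machinery; once you have the slice theorem, the orbit type near $p$ is bounded above by $[G_p]$, and a simple induction on the pair $(\dim G_p, |\pi_0(G_p)|)$ together with connectedness of $M$ gives a unique minimal type whose locus is open and dense. Second, for connectedness you are right that the content is the inductive cone argument, but you should be explicit that the induction is on the dimension of the \emph{slice} (equivalently, the codimension of the orbit), not on $\dim M$ itself, since the slice representation at a singular point is an orthogonal action of a compact group on a Euclidean space of strictly smaller dimension. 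With those clarifications your plan goes through and matches what the paper is implicitly citing.
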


We recall that regular orbits are orbits whose conjugacy class of stabilizer groups is minimal (under the standard ordering). The minimal stabilizer groups are called principal, stabilizer groups that are of higher dimension than the principal stabilizers are called singular, and the remainder are called exceptional. Identical naming conventions are given to the corresponding orbits.

\begin{remark}
    A geometric consequence of Theorem \refeq{thm_1} that we shall see in \S 3 is that the process of constructing metrics under symmetries is completed via a boundary value problem on the regular orbits with non-regular orbits as the boundary.
\end{remark}

The second result is the classical theorem from \cite{Mostert1957} below. 

\begin{theorem}
\label{thm_mos}
    If a compact Lie group acts smoothly via a cohomogeneity one action, then the orbit space is a $1$-dimensional manifold with boundary.
\end{theorem}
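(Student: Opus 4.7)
The plan is to use Koszul's slice theorem for proper actions, which is available here because $G$ is compact. Given any $p \in M$ with stabilizer $H = G_p$, there is a $G$-invariant open neighborhood of the orbit $Gp$ that is $G$-equivariantly diffeomorphic to the twisted product $G \times_H V$, where $V$ is a slice at $p$ carrying an orthogonal linear $H$-action. Since this equivariant diffeomorphism identifies the quotient of the neighborhood with $V/H$, it suffices to show that $V/H$ is, as a topological space, either an open interval or a half-line $[0,\infty)$ whenever the original action is cohomogeneity one.

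First I would verify that $M/G$ is Hausdorff, second countable, and connected (assuming $M$ connected): Hausdorffness and local compactness follow from properness, second countability descends from $M$, and connectedness is obvious. Combined with the local model above, these properties place $M/G$ squarely in the category where a classification of $1$-dimensional topological manifolds with boundary applies.

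The core step is the classification of the local quotient $V/H$. Because $H$ acts orthogonally and fixes $0$, it preserves the spheres $S_r \subset V$ of each radius. The quotient $V/H$ is parametrized by the radial coordinate together with the orbit space $S_r/H$, so $V/H$ is one-dimensional if and only if $H$ acts transitively on each sphere $S_r$. There are then two cases: either $H$ acts trivially on $V$, which forces $\dim V = 1$ and yields $V/H \cong \mathbb{R}$ (giving an interior point of the orbit space, corresponding to a principal or exceptional orbit where the slice representation is trivial, or principal isotropy acting as reflections is handled similarly), or $H$ acts nontrivially and transitively on spheres, in which case the radial map $V/H \to [0,\infty)$ is a homeomorphism and the orbit $Gp$ becomes a boundary point of $M/G$. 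The exceptional case with $\dim V = 1$ and $H/H^{0}$ acting by $\pm 1$ also lands in one of these two buckets.

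The main obstacle is checking that these local models patch together to give a global topological $1$-manifold with boundary, rather than just a space that is locally modeled on such pieces. Once the slice theorem provides the local models and the Hausdorff/second-countable properties are in place, the patching is a formal consequence of the fact that transitions between neighborhoods of nearby orbits factor through the common principal orbit type, giving homeomorphic overlaps of open intervals inside $M/G$. At that point, the standard classification of $1$-manifolds with boundary (circle, line, half-line, or closed interval) applies, and the conclusion follows without further work.
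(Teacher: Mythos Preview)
The paper does not actually prove this theorem; it simply quotes it as ``the classical theorem from \cite{Mostert1957}'' and uses it as input. So there is no in-paper argument to compare against.

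Your sketch is the standard slice-theorem proof and is essentially correct. The key step---that for a linear orthogonal $H$-action on the slice $V$ the quotient $V/H$ is the cone on $S_r/H$, and that this cone is a $1$-manifold with boundary precisely when $S_r/H$ is one or two points---is the right mechanism. Two small imprecisions are worth cleaning up. First, your ``if and only if'' statement that $V/H$ is one-dimensional exactly when $H$ is transitive on spheres is not literally true: when $\dim V=1$ and $H$ acts trivially, $S_r/H$ has two points and $V/H\cong\mathbb{R}$, yet $H$ is not transitive on $S_r$. You do treat this case separately, so the logic survives, but the blanket biconditional should be dropped. Second, the parenthetical about an ``exceptional orbit where the slice representation is trivial'' is a contradiction in terms: if $H$ acts trivially on the slice then every nearby isotropy contains a conjugate of $H$, forcing $H$ to be principal. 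Exceptional orbits in cohomogeneity one have $\dim V=1$ with $H$ acting by $\pm 1$, so they contribute boundary points $V/H\cong[0,\infty)$, not interior points. With those two clarifications your argument goes through; the global patching and the appeal to the classification of connected $1$-manifolds with boundary are fine as stated.
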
 

Theorem \refeq{thm_1} also implies that a proper connected cohomogeneity-one action of a simply-connected manifold produces a simply-connected orbit space. Indeed, the space of regular orbits can be shown to be the interior of the orbit space as a manifold with boundary. Any loop in the orbit space is therefore homotopic to one containing only regular orbits by Theorem \refeq{thm_1} and hence lifts to a path in the manifold which can be closed by a path along an orbit. 

As we only consider simply-connected spaces, Theorem \refeq{thm_mos} implies we only need to consider group actions whose orbit spaces are homeomorphic to one of $\mathbb{R}$, a closed ray or a closed interval. We will henceforth always assume the action is via a compact group (and thus is proper). One should note that Hoelscher's classification of compact cohomogeneity one actions on simply-connected manifolds works over all proper actions. Indeed, a proper action induces a proper irreducible effective action which exists if and only if there exists a complete invariant metric. Furthermore, if $G$ acts properly and effectively on a connected compact manifold, then $G$ must be compact since the isometry group of a connected compact Riemannian manifold is compact.

We can now rule out the possibility of exceptional orbits using the following standard result.

\begin{lemma}
\label{lem_isotropy_sphere}
    Let a Lie group $G$ act with cohomogeneity one on a smooth manifold $M$. If $K = G_p$ is not principal and $H = G_x$ is principal for $x \in \exp(T_p(G\cdot p)^\perp)$, then $K/H$ is a sphere.
\end{lemma}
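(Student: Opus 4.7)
The plan is to pass to the slice representation of $K$ on the normal space $V := T_p(G\cdot p)^{\perp}$, taken with respect to any $G$-invariant Riemannian metric (which exists by averaging). By the equivariant tubular neighbourhood theorem, a $G$-invariant neighbourhood $U$ of the orbit $G\cdot p$ is $G$-equivariantly diffeomorphic to the twisted bundle $G \times_K V$ via $[g,v] \mapsto g\cdot \exp_p(v)$, and the induced $K$-action on $V$ is orthogonal. This reduces the problem to analysing a linear compact group action on a Euclidean space.

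Next, I would transfer the cohomogeneity-one hypothesis through the homeomorphism $U/G \cong V/K$ induced by the slice identification. Since $G$ acts with cohomogeneity one on $M$, Theorem \refeq{thm_mos} implies $U/G$, and hence $V/K$, is a $1$-dimensional manifold with boundary; moreover the origin of $V$ maps to a distinguished point (a boundary or interior endpoint of $V/K$) because $K$ is non-principal, so $0$ is the unique $K$-fixed point in a neighbourhood. The $K$-invariant norm function descends to a continuous surjection $V/K \to [0,\infty)$ that sends the class of $0$ to $0$. Any two concentric $K$-orbits on a common sphere would force a branching in the $1$-manifold $V/K$ over a single value of $r$, which is impossible; hence this map must be a homeomorphism, and $K$ acts transitively on every sphere $S_r \subset V$.

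Finally, I identify the stabilisers. Writing $x = \exp_p(v)$ for some $v \in V \setminus \{0\}$, the $K$-equivariance of $\exp_p$ on the slice gives $H = G_x = K_v$, so
\[
    K/H \;=\; K/K_v \;\cong\; K \cdot v \;=\; S_{\|v\|}(V),
\]
which is diffeomorphic to the standard sphere $S^{\dim V - 1}$. The main obstacle is the middle step: one must argue carefully that the cohomogeneity-one condition on $G \curvearrowright M$ really passes through the slice theorem to force transitivity of $K$ on each sphere in $V$. Once this is in hand, the identification of $K/H$ with an orbit-sphere is automatic from equivariance of $\exp_p$.
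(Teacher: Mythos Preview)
Your proof is correct and follows essentially the same route as the paper's own argument: both pass to the slice representation of $K$ on the normal space $V=T_p(G\cdot p)^\perp$, use the identification $U/G\cong V/K$ together with Mostert's theorem (Theorem~\ref{thm_mos}) to conclude that $K$ acts transitively on each small sphere in $V$, and then read off $K/H\cong K\cdot v=S_{\|v\|}(V)$ from $H=K_v$. The paper compresses all of this into two sentences (writing ``$K\cdot v=G\cdot v$ is the sphere \ldots\ since $G$ acts homogeneously on it''), whereas you spell out the tubular-neighbourhood identification and the one-dimensionality of $V/K$ explicitly; your ``branching'' justification for transitivity is a touch informal but captures exactly the right obstruction.
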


\begin{proof}
    $K/H$ is homeomorphic to $K\cdot v$, the image of the induced action of $K$ on sufficiently small but non-zero $v \in T_p(G \cdot p)^\perp$. In particular, $K \cdot v = G \cdot v$ is the sphere $\partial B_{||v||}(T_p(G\cdot p)^\perp)$ since $G$ acts homogeneously on it (trivially by Theorem \refeq{thm_mos} for example).
\end{proof}

\begin{corollary}
    Cohomogeneity one manifolds that admit exceptional orbits are not simply-connected.
\end{corollary}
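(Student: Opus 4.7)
The plan is to exhibit a surjective homomorphism $\pi_1(M) \to \mathbb{Z}/2\mathbb{Z}$, equivalently a connected double cover of $M$, starting from a single exceptional orbit. Suppose $G \cdot p = G/K$ is exceptional; since $\dim K = \dim H$, Lemma \ref{lem_isotropy_sphere} forces $K/H = S^0 \cong \mathbb{Z}/2\mathbb{Z}$, so $H$ has index two in $K$. The slice theorem then gives a tubular neighborhood $U \cong G \times_K \mathbb{R}$ on which $K$ acts through the nontrivial sign representation $K \to K/H \hookrightarrow \mathrm{O}(1)$; this non-orientable normal line bundle is the whole source of the fundamental group I want to detect.

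Next I would set $V = M \setminus (G \cdot p)$ and analyze the van Kampen decomposition $M = U \cup V$. The key small observation is the $K$-equivariant splitting $\mathbb{R} \setminus \{0\} \cong (K/H) \times \mathbb{R}_{>0}$, which gives $U \cap V = G \times_K (\mathbb{R}\setminus\{0\}) \cong G/H \times \mathbb{R}_{>0}$; in particular $U \cap V$ is connected. A short local argument (the punctured tubular neighborhood having a single component, combined with connectedness of $M$) then shows $V$ is connected as well --- geometrically, the exceptional orbit fails to separate $M$ precisely because its normal bundle is non-orientable. Note further that $U$ deformation retracts onto $G/K$, and under this retraction the inclusion $U \cap V \hookrightarrow U$ induces on $\pi_1$ exactly the map associated to the connected double cover $G/H \to G/K$, whose image has index two.

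Finally I would read off the surjection. The quotient $\pi_1(G/K)/\mathrm{image}(\pi_1(G/H))$ is cyclic of order two, giving a homomorphism $\phi_U : \pi_1(U) \to \mathbb{Z}/2\mathbb{Z}$ that vanishes on the image of $\pi_1(U \cap V)$; setting $\phi_V \equiv 0$ produces a compatible pair, and the pushout $\pi_1(M) = \pi_1(U) *_{\pi_1(U \cap V)} \pi_1(V)$ yields the desired surjection $\pi_1(M) \to \mathbb{Z}/2\mathbb{Z}$. The main obstacle I anticipate is the equivariant slice bookkeeping needed to confirm connectedness of $V$ and to identify the $\pi_1$-map of $U \cap V \hookrightarrow U$ with the double cover --- once these topological facts are pinned down, the van Kampen step is formal. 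A more hands-on alternative would be to construct the cover directly by replacing each tubular neighborhood $G \times_K \mathbb{R}$ with the trivial bundle $G/H \times \mathbb{R}$ and gluing two copies of $V$ along the doubled boundary; both routes produce the same $\mathbb{Z}/2\mathbb{Z}$ cover.
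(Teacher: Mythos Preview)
Your proposal is correct and follows the same underlying strategy as the paper: both use Lemma~\ref{lem_isotropy_sphere} to conclude $K/H \cong \mathbb{Z}_2$ and then produce a connected double cover of $M$ from this index-two inclusion. The paper's proof simply asserts the existence of the 2-fold cover (deferring details to \cite{GWZ2008}), whereas you spell out the van Kampen argument and the explicit construction of the cover; your version is more self-contained but not conceptually different.
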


\begin{proof}
    If a cohomogeneity one manifold, $M$, has an exceptional orbit, $\dim(K) = \dim(H)$, then $K/H = \mathbb{Z}_2$. Therefore $M$ admits a $2$-fold cover so it cannot be simply-connected. This argument can be repeated to show cohomogeneity manifolds with finite fundamental group cannot admit exceptional orbits following the proof of Lemma 1.7 of \cite{GWZ2008}.
\end{proof}

Since the action is proper, there exists a complete Riemannian metric such that the group acts by isometries on it. In the case of a compact group action, this metric can be constructed via a standard averaging procedure and the slice theorem. The metric on $M$ induces a metric on $M/G$ which is isometric to a connected closed interval $I \subseteq \mathbb{R}$ such that $0 = \inf (I)$ when $I \neq \mathbb{R}$. 

Thus, we may construct a unit-speed geodesic $c:I \to M$ between non-principal orbits (if they exist) that intersects each orbit perpendicularly and only once. By Kleiner's lemma and the slice theorem, if a Lie group acts with cohomogeneity one, then the Lie group $G$, the stabilizer group at a principal orbit $H = G_{c(t)}$ ($t \in \text{Int}(I)$) and at any non-principal orbits (denoted $K$ if there is one and $K_{\pm}$ if there are two) $K_\cdot = G_{c(t)}$ ($t \in \partial I$) are sufficient to reconstruct the group action up to equivariant diffeomorphism.

As in the compact case, we call $H \subseteq G$ or $H \subseteq K \subseteq G$ the group diagram for a non-compact manifold when there exists a geodesic $c:\mathbb{R} \to M$ intersecting the orbits perpendicularly such that $G_{c(t)} = H$ is a principal isotropy for $t\neq 0$ and $G_{c(0)}=K$ is a non-principal isotropy if one exists.

To summarize, choosing $I$ homeomorphic to $M/G$ such that $0 = \inf I$ if there is a singular orbit, a simply-connected Riemannian manifold has $I = \mathbb{R}$ or $I = [0,\infty)$ if it is not compact and $I = [0,T]$ if it is compact and:
\begin{itemize}
    \item if $M$ is compact, then a group action by Lie group $G$ is determined up to $G$-equivariant diffeomorphism by its group diagram: $H \subseteq K_-,K_+ \subseteq G$, where $K_-$ and $K_+$ are the singular isotropies at $c(0)$ and $c(T)$ respectively;
    \item if $M$ is non-compact and admits a singular orbit, then a group action is determined up to $G$-equivariant diffeomorphism by its group diagram: $H \subseteq K \subseteq G$, where $K = G_{c(0)}$; and 
    \item if $M$ is non-compact and does not admit a singular orbit, then $M$ is $G$-equivariantly diffeomorphic to $G/H \times \mathbb{R}$ under the standard action by $G$ on the first component. 
\end{itemize}

In the compact case, the group diagram can be changed via the three following operations without changing the group action (up to equivariant diffeomorphism) as shown at the end of \S 1 in \cite{GWZ2008}.

\begin{theorem}
    \label{thm_equiv_diagrams}
    If a group action by $G$ on $M$ has group diagram $H \subseteq K_{\pm} \subseteq G$, then any of the following will result in a $G$-equivariantly diffeomorphic manifold:
    \begin{enumerate}
        \item Swapping $K_-$ and $K_+$;
        \item Conjugating each stabilizer by the same group element; and
        \item Replacing $K_-$ with $hK_-h^{-1}$ for $h \in N(H)_0$.
    \end{enumerate}
    Conversely, the group diagrams of two $G$-equivariantly diffeomorphic manifolds can be transformed into each other by some combination of the above operations.
\end{theorem}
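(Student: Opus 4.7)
The plan is to leverage the decomposition of a compact cohomogeneity one manifold as a union of two tubular neighborhoods of the singular orbits, glued along a principal orbit. By the slice theorem applied at $c(0)$ and $c(T)$, I would write $M = (G \times_{K_-} D_-) \cup_{G/H} (G \times_{K_+} D_+)$, where $D_\pm$ is a closed unit disk in the appropriate slice representation, and the gluing is a $G$-equivariant diffeomorphism $\psi: G/H \to G/H$ of the boundary principal orbits. The $G$-equivariant self-diffeomorphisms of $G/H$ are precisely right multiplications by elements of $N(H)/H$, so $M$ is reconstructed from the group diagram together with a coset in $N(H)/H$ representing the gluing.

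For the forward direction, I would verify each operation explicitly. Operation (1) corresponds to reparametrizing the geodesic as $c(t) \mapsto c(T-t)$, which swaps the two singular orbits. Operation (2) corresponds to replacing $c$ with $g \cdot c$, which conjugates every stabilizer along the new geodesic by $g$. For operation (3), given $h \in N(H)_0$, I would choose a path $h_s$ connecting $e$ to $h$ inside $N(H)$. The right-multiplication maps $R_{h_s}$ on $G/H$ form a $G$-equivariant isotopy, so the gluings $\psi$ and $\psi \circ R_h$ yield $G$-equivariantly diffeomorphic manifolds. Equivalently, this isotopy extends across $G \times_{K_-} D_-$ and, restricted to the singular orbit at $c(0)$, has the effect of conjugating $K_-$ by $h$.

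For the converse, suppose $\phi: M \to M'$ is a $G$-equivariant diffeomorphism. First, $\phi$ permutes the singular orbits; after applying operation (1) if necessary, I can assume $\phi(G\cdot c(0)) = G\cdot c'(0)$. By equivariance $\phi(c(0)) = g \cdot c'(0)$ for some $g \in G$, and operation (2) with this $g$ reduces to $\phi(c(0)) = c'(0)$, which forces $K_- = K'_-$. Then $\phi$ descends to a $K_-$-equivariant linear isomorphism of the slice $T_{c(0)}M / T_{c(0)}(G\cdot c(0))$, and by the $K_-$-homogeneity of the unit sphere in this slice (Lemma \ref{lem_isotropy_sphere}) I can adjust by an element of $K_-$ so that $\phi \circ c$ agrees with $c'$ on an initial arc. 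Extending out to $t = T$, the remaining freedom to deform $\phi \circ c$ into $c'$ while staying transverse to the orbits is parametrized exactly by the identity component of equivariant self-gluings of $G/H$, namely $N(H)_0/H$, and this manifests as operation (3) applied to $K_+$.

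The main obstacle will be the precise identification of the residual freedom in the converse: verifying that the connectedness condition $h \in N(H)_0$, rather than $h \in N(H)$, captures all of the ambiguity. This requires showing both that different components of $N(H)/H$ can produce non-equivariantly-diffeomorphic manifolds (so the condition is sharp), and that the equivariant mapping class group of each tubular neighborhood $G \times_{K_\pm} D_\pm$ acts on the boundary exactly through $N(H)_0/H$. Once this identification is made, the remaining bookkeeping is routine.
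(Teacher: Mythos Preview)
The paper does not prove this theorem; it simply cites the end of \S1 of \cite{GWZ2008} and records the geometric interpretation of operations (1) and (2) in the subsequent remark. Your proposal is essentially the standard argument found there: decompose $M$ as a double disk bundle $(G\times_{K_-}D_-)\cup_{G/H}(G\times_{K_+}D_+)$, identify the $G$-equivariant self-maps of the collar $G/H$ with $N(H)/H$, and then interpret the three operations as reversal of the geodesic, left translation of the geodesic, and an equivariant isotopy through $N(H)_0/H$, respectively. So your approach agrees with the paper's cited source.

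One small comment on your ``main obstacle'': you do not need to show that different components of $N(H)/H$ can yield non-equivariantly-diffeomorphic manifolds. The theorem is an if-and-only-if between equivariant diffeomorphism and the equivalence relation generated by the three operations; sharpness of the $N(H)_0$ restriction is not part of the statement. What you do need, and correctly identify, is that the equivariant isotopy classes of self-gluings of the boundary $G/H$ that extend across one of the disk bundles are parametrized by $N(H)_0/H$ (together with the contributions already absorbed by operations (1) and (2)). That is the genuine content of the converse, and your outline handles it.
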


\begin{remark}
    The first two operations are geometric in nature, achievable by reversing the geodesic, or translating the geodesic via a group element, respectively. However, a geodesic intersecting each orbit perpendicularly that describes the geometric group diagram of an action may not do so after applying the last operation.
\end{remark}

It is easily checked that the relevant conditions apply to the non-compact case.

\begin{lemma}
\label{lem_noncompactequivariant}
    If $G$ acts on two non-compact simply-connected cohomogeneity one manifolds, they are $G$-equivariantly diffeomorphic if and only if their group diagrams are related to each other by conjugation of some element in $G$.
\end{lemma}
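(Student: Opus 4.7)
The plan is to adapt the compact-case argument of Theorem \refeq{thm_equiv_diagrams}, whose three permitted modifications of a group diagram degenerate to operation 2 (conjugation in $G$) alone in the non-compact setting. First I would observe that operation 1 (swapping $K_\pm$) requires two singular orbits and is therefore vacuous. Next, operation 3 (replacing $K$ by $hKh^{-1}$ for $h \in N(H)_0$) is a special instance of operation 2 applied to $h \in N(H) \subseteq G$: conjugating the entire diagram by such $h$ fixes $H$ and sends $K$ to $hKh^{-1}$. Hence the equivalence relation on non-compact diagrams really does reduce to conjugation in $G$.

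For the reverse direction, given two diagrams related by conjugation by some $g \in G$, I would exhibit explicit equivariant diffeomorphisms. When there is no singular orbit, $([x], t) \mapsto ([xg^{-1}], t)$ is a $G$-equivariant diffeomorphism $G/H \times \mathbb{R} \to G/gHg^{-1} \times \mathbb{R}$. When there is a singular orbit, with defining geodesics $c$ and $c'$, the assignment $\phi(a \cdot c(t)) := ag^{-1} \cdot c'(t)$ is well-defined --- since $a^{-1}b \in G_{c(t)}$ implies $g(a^{-1}b)g^{-1} \in gG_{c(t)}g^{-1} = G_{c'(t)}$ --- and is $G$-equivariant; smoothness across $t = 0$ follows from the slice theorem.

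For the forward direction, suppose $\phi: M \to M'$ is a $G$-equivariant diffeomorphism. The no-singular-orbit case is immediate: the restriction of $\phi$ to any single orbit is an equivariant map $G/H \to G/H'$, forcing $H$ and $H'$ to be conjugate in $G$. In the singular-orbit case, $\phi$ must carry the unique singular orbit of $M$ to that of $M'$, so $\phi(c(0)) = a \cdot c'(0)$ for some $a \in G$, and $G$-equivariance yields $K = aK'a^{-1}$. Replacing $c'$ by $a \cdot c'$ --- which conjugates the diagram of $M'$ by $a$ without changing the manifold --- I may assume $K = K'$ and $\phi(c(0)) = c'(0)$.

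The hard part will be to align $H$ and $H'$ using the \emph{same} conjugation that aligned $K$ and $K'$. My plan is to invoke the slice theorem: neighborhoods of the singular orbits in $M$ and $M'$ are $G$-equivariantly diffeomorphic to $G \times_K V$ and $G \times_K V'$ for $K$-representations on the respective normal spaces, and $\phi$ restricts to a $K$-equivariant diffeomorphism of slices. This forces the generic (principal) $K$-stabilizers $H$ and $H'$ to be conjugate in $K \subseteq G$; since conjugation by an element of $K$ preserves $K$ itself, composing the earlier outer conjugation by $a$ with this inner conjugation realizes the entire diagram equivalence as a single conjugation in $G$, completing the proof.
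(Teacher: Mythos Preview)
The paper does not give a proof of this lemma; it merely remarks that ``it is easily checked that the relevant conditions apply to the non-compact case'' immediately before stating it. Your proposal therefore supplies what the paper leaves implicit, and the overall strategy --- observing that operations 1 and 3 of the compact-case theorem collapse to operation 2 when there is at most one singular orbit, and then verifying both implications directly --- is sound and is precisely the kind of check the paper is gesturing at.

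One small point of care in your forward-direction argument: the claim that ``$\phi$ restricts to a $K$-equivariant diffeomorphism of slices'' is not literally correct, since an equivariant diffeomorphism need not carry the metric slice through $c(0)$ to the metric slice through $c'(0)$. What you actually need (and what your conclusion really uses) is that the differential $d\phi_{c(0)}$ is a $K$-equivariant linear isomorphism of tangent spaces, which descends to a $K$-equivariant isomorphism of the normal representations $V \cong V'$; equivalence of the slice representations then forces the generic $K$-isotropies $H$ and $H'$ to be conjugate in $K$, exactly as you conclude. With that adjustment the argument is complete.
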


Any collection of Lie groups $H \subseteq K \subseteq G$ (or $H \subseteq K_{\pm} \subseteq G$) such that $K/H$ (or $K_{\pm}/H$) is a sphere can be realized as the group diagram of the obvious cohomogeneity one group action of $G$ on the disc bundle $G \times_K \mathbb{R}^{l+1}$ (or the union of two disc bundles in the compact case) as in the slice theorem, where $K/H \simeq \mathbb{S}^{l}$. Trivially, any group diagram of the form $H \subseteq G$ is realized by the obvious action of $G$ on $G/H \times \mathbb{R}$.

Therefore, it is sufficient to consider group diagrams when classifying and analyzing cohomogeneity one group actions. By studying the possible group diagrams, Hoelscher was able to give a simple classification of the almost effective cohomogeneity one actions on simply-connected compact $4$-manifolds in \cite{Hoelscher2008}. Hoelscher's classification implies that any cohomogeneity one metric on a compact $4$-dimensional manifold is invariant under an action $G$-equivariantly diffeomorphic to one in Table \refeq{tab_compact}.

\renewcommand{\arraystretch}{1.35}
\begin{table}[h!]
\begin{center}
    \begin{tabular}{||c | c c c c||} 
        \hline
        $M$ & $G$ & $K_-$ & $K_+$ & $H$\\ [0.5ex] 
        \hline\hline
        $\mathbb{S}^4$ & $\mathrm{SO}(3) \times \mathrm{SO}(2)$ & $\mathrm{SO}(2)\times \mathrm{SO}(2)$ & $\mathrm{SO}(2)\times \mathrm{SO}(2)$ & $\mathrm{SO}(2)\times 1$\\
        \hline
        $\mathbb{S}^2\times \mathbb{S}^2$ & $\mathrm{SO}(3) \times \mathrm{SO}(2)$ & $\mathrm{SO}(3)\times 1$ & $\mathrm{SO}(2)\times \mathrm{SO}(2)$ & $\mathrm{SO}(2)\times 1$\\
        \hline
        $\mathbb{S}^4$ & $\mathrm{SO}(3)$ & $\mathrm{S}(\mathrm{O}(2)\times \mathrm{O}(1))$ & $\mathrm{S}(\mathrm{O}(1)\times \mathrm{O}(2))$ & $\mathrm{S}(\mathrm{O}(1)\times\mathrm{O}(1)\times \mathrm{O}(1))$\\
        \hline
        $\mathbb{CP}^2$ & $\mathrm{SO}(3)$ & $\mathrm{S}(\mathrm{O}(1)\times \mathrm{O}(2))$ & $\mathrm{SO}(2)\times \mathrm{SO}(1)$ & $\mathbb{Z}_2$\\
        \hline
        $\mathbb{S}^4$ & $\mathrm{SU}(2)$ & $\mathrm{SU}(2)$ & $\mathrm{SU}(2)$ & $1$\\
        \hline
        $\mathbb{CP}^2$ & $\mathrm{SU}(2)$ & $\mathrm{SU}(2)$ & $\mathrm{U}(1)$ & $1$\\
        \hline
        $\mathbb{CP}^2\# \overline{\mathbb{CP}^2}$ & $\mathrm{SU}(2)$ & $\mathrm{U}(1)$ & $\mathrm{U}(1)$ & $\mathbb{Z}_n$,  $n$ \text{odd}\\
        \hline
        $\mathbb{S}^2\times \mathbb{S}^2$ & $\mathrm{SO}(3)$ & $\mathrm{SO}(2)$ & $\mathrm{SO}(2)$ & $\mathbb{Z}_n$\\
        \hline
    \end{tabular}
    \caption{Simply-connected compact $4$-manifolds that admit a cohomogeneity one group action and their effective irreducible group diagrams.}\label{tab_compact}
\end{center}
\end{table}
\renewcommand{\arraystretch}{1}
\begin{remark}
    By Theorem \refeq{thm_equiv_diagrams}, each row of Table \refeq{tab_compact} corresponds to a distinct group action. The group actions by $\mathrm{SO}(3)$ lift to other distinct $\mathrm{SU}(2)$ actions which are less convenient to present but more convenient to use. In particular, the final action lifts to $\mathbb{Z}_n \subseteq \mathrm{U}(1),\mathrm{U}(1) \subseteq \mathrm{SU}(2)$ for even $n$.
\end{remark}

In the remainder of this section, we similarly list the minimal cohomogeneity one actions by compact groups on non-compact $4$-manifolds --- a simply-connected Riemannian $4$-manifold invariant under some cohomogeneity one action must be invariant under one we list in Table \refeq{tab_non_compact}. This follows immediately from Theorem \refeq{thm_noncompact} by reducing the group action, taking the quotient by the ineffective kernel and taking the identity component. Before we prove Theorem \refeq{thm_noncompact}, we prove a useful Lemma. 

\begin{lemma}\label{lem_maxdimG}
    Let $G$ be a compact simply-connected Lie group of dimension $6$ that acts by a cohomogeneity one action on a non-compact manifold $M$ of dimension $4$. If there are no exceptional orbits, then $G = Spin(4)$ and the action is equivalent to 
    \begin{enumerate}
        \item the defining action of $\mathrm{SO}(4)$ on $\mathbb{R}^4$ if $M$ admits a singular orbit; or
        \item $\mathrm{SO}(4)$ on $\mathbb{S}^3 \times \mathbb{R}$ acting homogeneously on the first component if $M$ does not admit a singular orbit.
    \end{enumerate} 
\end{lemma}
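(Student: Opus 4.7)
The plan is to first pin down $G$ and then enumerate the possibilities for the principal isotropy. A compact simply-connected Lie group has no toral factors and so is a product of compact simply-connected simple groups; the only such group of dimension six is $G = \mathrm{Spin}(4) = \mathrm{SU}(2) \times \mathrm{SU}(2)$. Since the action is cohomogeneity one on a $4$-manifold, principal orbits are $3$-dimensional and the principal isotropy $H$ has dimension $3$.

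Next I would classify the connected $3$-dimensional subgroups of $\mathrm{Spin}(4)$ up to conjugacy. Using that $\mathrm{Out}(\mathrm{SU}(2))$ is trivial and that any $3$-dimensional subalgebra of $\mathfrak{su}(2) \oplus \mathfrak{su}(2)$ is the graph of a Lie algebra homomorphism between the factors, the options are $\mathrm{SU}(2) \times 1$, $1 \times \mathrm{SU}(2)$, and $\Delta \mathrm{SU}(2) = \{(g,g) : g \in \mathrm{SU}(2)\}$. The first two are normal $\mathrm{SU}(2)$-factors of $G$, and any such factor contained in the principal isotropy must also be contained in every other isotropy subgroup, producing a positive-dimensional ineffective kernel; this contradicts the minimal/almost-effective context in which this classification is being assembled. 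Hence $H_0 = \Delta \mathrm{SU}(2)$ up to conjugacy. Using that $M$ and $G$ are simply-connected, the surjection $\pi_1(M) \twoheadrightarrow \pi_0(H)$ arising either from $M = G/H \times \mathbb{R}$ in the homogeneous case, or from the bundle structure $M = G \times_K D^{l+1}$ combined with $\pi_0(K) \twoheadrightarrow \pi_0(H)$ in the other case, forces $H$ to be connected.

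It then remains to split on whether a singular orbit exists. If $M$ has no singular orbit, then $M$ is $G$-equivariantly diffeomorphic to $G/H \times \mathbb{R} \cong \mathbb{S}^3 \times \mathbb{R}$, which is precisely the stated action induced by the covering $\mathrm{Spin}(4) \to \mathrm{SO}(4)$. If $M$ has a singular orbit with isotropy $K \supsetneq H$, then by the no-exceptional-orbits hypothesis $\dim K \geq 4$, and by Lemma \refeq{lem_isotropy_sphere} the quotient $K/H$ is a sphere. The core of the argument is to classify Lie subalgebras $\mathfrak{k} \subseteq \mathfrak{su}(2) \oplus \mathfrak{su}(2)$ strictly containing $\Delta \mathfrak{su}(2)$. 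Writing $\mathfrak{k} = \Delta \mathfrak{su}(2) \oplus \mathfrak{p}$ with $\mathfrak{p}$ an $\mathrm{Ad}(\Delta \mathrm{SU}(2))$-invariant subspace of the orthogonal complement $\{(X, -X) : X \in \mathfrak{su}(2)\}$, one recognizes this complement as the standard adjoint representation, which is irreducible. Hence $\mathfrak{p} = 0$ or $\mathfrak{p}$ is the full complement, so $K = G$. The singular orbit is then a fixed point, $K/H = \mathbb{S}^3$, and the slice representation is the defining one, giving $M \cong \mathbb{R}^4$ with the standard $\mathrm{SO}(4)$-action lifted to $\mathrm{Spin}(4)$.

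The main obstacle I expect is tracking the interplay between connectedness, (almost-)effectiveness and simple-connectedness: each must be invoked at the right moment to discard the degenerate candidates for $H$ and to forbid disconnected extensions, all while remaining faithful to the precise wording of the hypothesis. Once those reductions are in place, everything else reduces to the representation-theoretic irreducibility computation above, which is routine.
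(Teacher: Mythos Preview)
Your proposal supplies a full, self-contained argument, whereas the paper simply cites Proposition~1.22 of Hoelscher and records the group diagrams $\mathrm{Spin}(3) \subseteq \mathrm{Spin}(4) \subseteq \mathrm{Spin}(4)$ and $\mathrm{Spin}(3) \subseteq \mathrm{Spin}(4)$. Your route---identifying $G$, classifying the three conjugacy classes of $3$-dimensional subgroups of $\mathrm{SU}(2)\times\mathrm{SU}(2)$, discarding the normal factors by almost-effectiveness, and then using irreducibility of the adjoint representation on the anti-diagonal to force $K=G$---is correct and more informative; the paper's citation buys brevity and makes the higher-dimensional generalisation in the subsequent Remark automatic.

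One point needs tightening. The claimed surjection $\pi_0(K)\twoheadrightarrow\pi_0(H)$ is not a valid device: the inclusion $H\hookrightarrow K$ induces a map $\pi_0(H)\to\pi_0(K)$, not the reverse, and at this stage of your argument you do not yet know $l\geq 2$. The clean fix is to run your irreducibility argument on $K_0\supseteq H_0=\Delta\mathrm{SU}(2)$ first (which only needs $\dim K>\dim H$, i.e.\ the no-exceptional-orbits hypothesis), conclude $K_0=G$ and hence $K=G$, and then read off $K/H\cong\mathbb{S}^3$; the long exact sequence of the fibration $H\to K\to \mathbb{S}^3$ with $\pi_1(\mathbb{S}^3)=0$ and $\pi_0(K)=0$ now gives $H$ connected. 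Alternatively, observe $H\subseteq N_G(\Delta\mathrm{SU}(2))=\Delta\mathrm{SU}(2)\cdot\{(1,\pm1)\}$ and rule out the index-two extension directly, since it would force $K/H\cong\mathbb{RP}^3$. You are also right to flag that simple-connectedness of $M$ and almost-effectiveness are standing assumptions imported from the surrounding text rather than hypotheses stated in the lemma itself.
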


\begin{remark}
    Lemma \refeq{lem_maxdimG} holds in higher dimensions when replacing $4$ with $n$ and $6$ with ${n(n-1)}/{2}$.
\end{remark}

\begin{proof}
    The proof follows as in the proof of Proposition 1.22 of \cite{Hoelscher2008}, noting these actions are given by group diagrams $Spin(n-1) \subseteq Spin(n) \subseteq Spin(n)$ and $Spin(n-1) \subseteq Spin(n)$ respectively.
\end{proof} 

\begin{theorem}
    \label{thm_noncompact}
    Each irreducible effective cohomogeneity one group action by a connected compact group on a non-compact simply-connected $4$-dimensional manifold is equivariantly diffeomorphic to one with group diagram in Table \refeq{tab_non_compact}.
\end{theorem}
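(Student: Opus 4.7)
The plan is to proceed in analogy with Hoelscher's classification of the compact case, but with the simpler structure that the non-compact orbit space is either $\mathbb{R}$ or $[0,\infty)$, so there is at most one non-principal orbit to analyze. Throughout we write $H$ for the principal isotropy and $K$ for the singular isotropy (when it exists), with $\dim G/H = 3$ forcing the bound
\begin{equation*}
    3 \le \dim G \le 3 + \dim \mathrm{SO}(3) = 6,
\end{equation*}
since $H$ embeds into the isotropy representation on $T_{eH}(G/H)\cong\mathbb{R}^3$. I would first dispatch the case $\dim G = 6$ directly via Lemma \refeq{lem_maxdimG}, which already lists the two resulting actions. In the residual cases $\dim G \in \{3,4,5\}$ I would enumerate, up to covering and taking the ineffective kernel, the compact connected Lie groups of each dimension ($\mathrm{SU}(2)$ and $\mathrm{SO}(3)$ in dimension $3$; $\mathrm{U}(2)$ and $\mathrm{SU}(2)\times\mathrm{U}(1)$ in dimension $4$; $\mathrm{SU}(2)\times \mathrm{SO}(2)$ in dimension $5$, etc.), and determine in each case the allowed $H$ of dimension $\dim G - 3$.

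Next I would impose the sphere condition $K/H = \mathbb{S}^l$ from Lemma \refeq{lem_isotropy_sphere} on every proposed singular isotropy. Because we are in the non-compact setting with at most one singular orbit, there is no balancing of two sides: the analysis of valid $K$ amounts to identifying intermediate subgroups $H \subsetneq K \subseteq G$ whose quotient is a single sphere. For the higher-dimensional cases $\dim G \in \{4,5\}$ this is a short list because $H$ is a circle (or a torus), and the classification reduces to checking a handful of rank-one inclusions such as $\mathrm{U}(1)\subseteq \mathrm{SU}(2)$, giving the disc bundles $\mathbb{R}^4$ and $\mathcal{O}(-n)$. When no singular orbit exists, the slice theorem trivialises things: $M$ is $G$-equivariantly diffeomorphic to $G/H \times \mathbb{R}$, so only the pair $H\subseteq G$ needs to be listed.

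The main obstacle is the case $\dim G = 3$ with $H$ a nontrivial finite group, where I would use the well-known classification of finite subgroups of $\mathrm{SU}(2)$ (cyclic, binary dihedral, binary polyhedral) and, for each, determine which supergroups $K$ are allowed. The sphere condition forces $K\in\{\mathrm{SU}(2),N(\mathrm{U}(1)),\mathrm{U}(1)\cdot H\}$ with $K/H$ a standard sphere; combined with the requirement that $H$ be normal in $K$ (since $K/H$ carries the induced group structure from $\mathrm{SU}(2)$ acting on $\mathbb{R}^{l+1}$), this prunes the list considerably. Irreducibility of the action and effectiveness --- forcing $H$ to have trivial $G$-normal core --- discard those diagrams in which a positive-dimensional normal subgroup acts trivially; this is where taking the ineffective kernel and passing to the identity component of the quotient, as in the statement, is invoked.

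Finally, I would apply the non-compact analogue of the equivalence of group diagrams (Lemma \refeq{lem_noncompactequivariant}): two diagrams related by global conjugation in $G$ determine the same action, so I would pick canonical representatives (e.g., the standard $\mathrm{U}(1)\subseteq \mathrm{SU}(2)$) to present each entry of Table \refeq{tab_non_compact} exactly once. Collating the surviving $(G,H,K)$ and $(G,H)$ triples across all dimensions yields the table; a short verification that each listed diagram is realised by the disc-bundle construction $G\times_K\mathbb{R}^{l+1}$ (or the product $G/H\times\mathbb{R}$) closes the proof.
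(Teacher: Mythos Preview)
Your overall strategy (case analysis on $\dim G$, the sphere condition on $K/H$, and Lemma~\ref{lem_noncompactequivariant} to normalise diagrams) matches the paper's. However, the paper handles your ``main obstacle'' much more cheaply: when there is a singular orbit, $G/K$ is a deformation retract of the simply-connected $M$, hence $G/K$ is simply-connected, and the fibration $K\to G\to G/K$ forces $K$ to be \emph{connected}. For $G=\mathrm{SU}(2)$ this leaves only $K\in\{\mathrm{SU}(2),\mathrm{U}(1)\}$, and the sphere condition then pins $H$ to $1$ or $\mathbb{Z}_n$ immediately. The classification of finite subgroups of $\mathrm{SU}(2)$ is never needed. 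The paper also covers the torus-factor cases $G=\mathrm{SU}(2)\times\mathbb{T}^k$ and $G=\mathbb{T}^{k+3}$ in one stroke by reducibility, whereas you propose enumerating each dimension separately (and note that $\mathrm{SU}(2)\times\mathrm{SO}(2)$ has dimension $4$, not $5$).

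Two genuine errors to flag. First, your treatment of $\dim G=6$: Lemma~\ref{lem_maxdimG} does identify the two $\mathrm{SO}(4)$-actions, but the conclusion is that they are \emph{reducible} (since $Spin(4)\cong\mathrm{SU}(2)\times\mathrm{SU}(2)$) and hence excluded from the table, not included. Second, your claim that $H$ must be normal in $K$ is false: $K/H$ being a sphere is a condition on the slice representation, not a group-quotient condition, and there is no reason for $K/H$ to inherit a group structure (e.g.\ $\mathrm{SO}(4)/\mathrm{SO}(3)\cong\mathbb{S}^3$ with $\mathrm{SO}(3)$ not normal). In the present low-dimensional setting this error happens not to change the final list, but the reasoning you give for pruning candidate $K$'s is invalid as stated.
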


\renewcommand{\arraystretch}{1.35}
\begin{table}[h!]
\begin{center}
    \begin{tabular}{||c | c c c||} 
        \hline
        $M$ & $G$ & $K$ & $H$\\ [0.5ex] 
        \hline\hline
        $\mathbb{S}^3 \times \mathbb{R}$ & $\mathrm{SU}(2)$ & - & $1$\\
        \hline
        $\mathbb{S}^2\times \mathbb{R}^2$ & $\mathrm{SO}(3) \times \mathrm{SO}(2)$ & $\mathrm{SO}(2)\times \mathrm{SO}(2)$ & $\mathrm{SO}(2)\times 1$\\
        \hline
        $\mathbb{R}^4$ & $\mathrm{SU}(2)$ & $\mathrm{SU}(2)$ & $1$\\
        \hline
        $\mathcal{O}(-n)$ & $\mathrm{SU}(2)$ & $\mathrm{U}(1)$ & $\mathbb{Z}_n$,  $n$ \text{odd}\\
        \hline
        $\mathcal{O}(-2n)$ & $\mathrm{SO}(3)$ & $\mathrm{SO}(2)$ & $\mathbb{Z}_n$\\
        \hline
    \end{tabular}
    \caption{Simply-connected non-compact $4$-manifolds that admit a cohomogeneity one compact group action and their effective irreducible group diagrams.}\label{tab_non_compact}
\end{center}
\end{table}
\renewcommand{\arraystretch}{1}

\begin{proof}
    As before, it will be convenient to construct ineffective actions in place of some effective actions. Since $G$ is connected, compact and acts by an effective action, it is covered by the product of simply-connected Lie groups and a torus, and the induced action by the cover is almost effective. We may therefore assume that $G$ acts almost effectively and is the product of simply-connected Lie groups and a torus. We also have the well known dimension restrictions of almost effective actions, $3 \leq \dim(G) \leq 6$ (see Proposition 1.21 of \cite{Hoelscher2008} for example). We can immediately rule out the case $\dim(G) = 6$ by Lemma \refeq{lem_maxdimG} as $Spin(4)$ is isomorphic to $\mathrm{SU}(2)\times \mathrm{SU}(2)$ which necessarily produces reducible actions. Hence, it suffices to consider $G = \mathrm{SU}(2)\times \mathbb{T}^k$ or $G = \mathbb{T}^{k+3}$ where $\mathbb{T}^k$ is a torus of dimension $0 \leq k \leq 3$.
    
    If there are no singular orbits, $M$ is diffeomorphic to $G/H \times \mathbb{R}$ while $G/H$ must be compact and simply-connected and hence is diffeomorphic to $\mathbb{S}^3$. The only irreducible action from the admissible groups is $G = \mathrm{SU}(2)$ and it must have trivial principal isotropy as $G$ covers $G/H$.

    If instead there is a singular orbit, then there is exactly one. Moreover, the singular orbit $G/K$ is a deformation retract of $M$, so $G/K$ is simply-connected and hence $K$ is connected. In particular, since $\dim(M)>2$, $G$ is not a torus. As the $\mathrm{U}(1)$ subgroups are the only connected non-trivial subgroups of $\mathrm{SU}(2)$, if $G = \mathrm{SU}(2)$, then either $K=\mathrm{SU}(2)$ and $H=1$ since $K/H$ must be a $3$-sphere, or $K=\mathrm{U}(1)$ and $H = \mathbb{Z}_n$ for any $n\in\mathbb{Z}_{>0}$. If $G$ has a torus component and $H$ projects surjectively onto a circle group in the torus component, then the action is obviously reducible. If the action is by $G=\mathrm{SU}(2)\times \mathbb{T}^k$, for $k \geq 2$, then it is reducible as there are no $2$ dimensional subgroups of $\mathrm{SU}(2)$ and $M$ is $4$-dimensional. This leaves one case; if $G = \mathrm{SU}(2) \times \mathrm{U}(1)$, then $H = \mathrm{U}(1) \times 1$ and the conditions on $G/K$ and $K/H$ imply that $K = \mathrm{U}(1) \times \mathrm{U}(1)$, which can be written more effectively as in Table \refeq{tab_non_compact}. Lemma \refeq{lem_noncompactequivariant} implies we do not need to specify the $\mathrm{U}(1)$ (or $\mathrm{SO}(2)$) subgroups in Table \refeq{tab_non_compact} and each corresponds to only one group action.
\end{proof}

\begin{remark}
    One more action worth mentioning is the action of $\mathrm{SO}(3) \times \mathrm{SO}(2)$ on the space $\mathbb{R}^3 \times \mathbb{S}^1$ with group diagram $\mathrm{SO}(2)\times 1 \subseteq \mathrm{SO}(3) \times 1 \subseteq \mathrm{SO}(3) \times \mathrm{SO}(2)$. The invariant metrics are doubly warped metrics --- the same type as on the $\mathrm{SO}(3)\times \mathrm{SO}(2)$ action in Table \refeq{tab_non_compact}. One may also note that this action lifts to a proper action by the non-compact group $\mathrm{SO}(3)\times \mathbb{R}$ on $\mathbb{R}^4$ and these two actions have equivalent spaces of invariant metrics (and hence equivalent spaces of Ricci solitons whose potential is also invariant). This can be seen by lifting the compact action to the equivalent but less effective action by $\mathrm{SO}(3)\times \mathbb{R}$ action on $\mathbb{R}^3 \times \mathbb{S}^1$. Finally we note that for the $\mathrm{SO}(3)\times \mathrm{SO}(2)$ actions here, all the invariant asymptotically conical expanding gradient Ricci solitons are already constructed \cite{GastelKronz2004,NienhausWink2024}.
\end{remark}

For the remainder of the work we focus solely on the case with singular orbits. There has been some study into the case without singular orbits, in particular, an $\mathrm{SU}(2)$-invariant expanding soliton on $\mathbb{R} \times \mathbb{S}^3$ must satisfy the scalar curvature conditions $-4 \leq S < -3$ at some point (normalizing $\lambda = - 1$) by Theorem 1.4 of \cite{MunteanuWang2011}.

Before moving to our boundary value problem in the following section, we note that the classification in Theorem \refeq{thm_noncompact} is sufficient to study simply-connected expanders with compact cohomogeneity one actions. If a compact gradient Ricci soliton is steady or expanding, the work of Perelman \cite{Perelman2002} shows that it must be Einstein. Since these metrics are invariant under group actions containing $\mathrm{SU}(2)$ or $\mathrm{SO}(3)$ components, the isometry group of the Einstein metrics cannot be abelian. However, Theorem 1.84 of \cite{Besse1987} implies that these metrics must have abelian isometry group. Therefore, simply connected cohomogeneity one gradient expanders (and steadies) can only be found on non-compact spaces.

\section{The initial value problem for $\mathrm{SU}(2)$-invariant gradient Ricci solitons in dimension 4}
In this section, we determine the initial value problem for gradient Ricci solitons invariant under the $\mathrm{SU}(2)$ actions in the previous section. We then prove short-time existence of solutions to the initial value problem while parameterizing and characterizing the short-time solutions.
\subsection{Constructing the initial value problem}
We first determine the Ricci soliton ODEs on the regular orbits before describing the smoothness conditions on the singular orbits. Following \S 2, if $(M,g)$ is a complete simply-connected Riemannian manifold invariant under a cohomogeneity one group action $G$, then there exists an open interval $I \subseteq \mathbb{R}$ and a unit-speed geodesic $c: I \to M_{reg}$ that interests each regular orbit perpendicularly and only once. The geodesic induces a $G$-equivariant diffeomorphism $I \times G/H \mapsto M_{reg}$ by the slice theorem. Therefore, each metric on $M$ corresponds via pullback to (the completion of) a $G$-invariant metric
\begin{equation}
    \label{eq_cohomogeneityonemetric}
    g = dt^2 + g_t
\end{equation}
on $I \times G/H$, where $(G/H,g_t)$ is homogenous for each $t \in I$. To further elucidate the metric, one must use action fields: the natural way to generate vector fields from elements of the Lie algebra $\mathfrak{g}$ of $G$. If $G$ is a Lie group acting on $M$, the action field of $X \in \mathfrak{g}$ is the vector field given by $X_p^* = \frac{d}{dt}|_{t=0} \exp(tX)p \in T_p(G \cdot p)$. In particular, if $\mathfrak{n}$ is an $\text{Ad}_H$-invariant complement of the Lie subalgebra $\mathfrak{h}$ of $\mathfrak{g}$, then action fields identify $\mathfrak{n}$ with $T_{c(t)}(G \cdot c(t))$. Specializing to $\mathrm{SU}(2)$ actions, $\mathfrak{h}$ is trivial and $\mathfrak{n} = \mathfrak{g} = \mathfrak{su}(2)$.

Action fields can also be used as in Proposition 2 of \cite{Buttsworth2025} to diagonalize $\mathrm{SU}(2)$-invariant $4$-dimensional gradient Ricci solitons. We can then use such a frame to produce the ODEs for the Ricci soliton equation on the regular orbits and the boundary smoothness conditions on the singular orbits.

\begin{theorem}
\label{thm_diagonalization}
    If a $4$-dimensional gradient Ricci soliton and its soliton potential are invariant under a cohomogeneity one $\mathrm{SU}(2)$-action, then there exists a basis $X_1,X_2,X_3 \in \mathfrak{su}(2)$ with relations $[X_i,X_{i+1}] = 2X_{i+2}$, where indices are taken modulo 3, such that the corresponding dual frame of smooth one-forms $dt,\omega_1,\omega_2,\omega_3$ to $\partial_t,X_1^*,X_2^*,X_3^*$ diagonalize $g = dt^2 + g_t$ on $M_\text{reg}$:
\begin{equation}
    \label{eq_metricfunctions}
    g = dt^2 + {f_1(t)}^2\omega_1^2+{f_2(t)}^2\omega_2^2+{f_3(t)}^2\omega_3^2
\end{equation}
for some smooth positive functions $f_i: I \to \mathbb{R}$. Moreover, $f_i$ are unique up to permutation.
\end{theorem}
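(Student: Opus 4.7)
For every $\mathrm{SU}(2)$-action in Table \ref{tab_non_compact} (and the compact analogues in Table \ref{tab_compact}) the principal isotropy $H$ is finite, so $M_{\text{reg}}$ is $\mathrm{SU}(2)$-equivariantly diffeomorphic to $I \times (\mathrm{SU}(2)/H)$, and every invariant metric takes the form $g = dt^2 + g_t$ with $g_t$ a left-invariant metric on $\mathrm{SU}(2)/H$. Fix an auxiliary basis $e_1, e_2, e_3$ of $\mathfrak{n} = \mathfrak{su}(2)$ with $[e_i, e_{i+1}] = 2\, e_{i+2}$ (indices mod $3$) and let $A(t)$ denote the symmetric positive-definite matrix $A_{ij}(t) = g_t(e_i^*, e_j^*)$. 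The plan is to produce a \emph{constant} $O \in SO(3)$ that diagonalizes $A(t)$ for all $t \in I$; the desired $X_i$ are then the images of the $e_i$ under $O$.

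Since the soliton potential $u$ is $\mathrm{SU}(2)$-invariant we have $u = u(t)$ and $\nabla u = u'(t)\,\partial_t$, so $\nabla^2 u$ has no mixed components between $\partial_t$ and the orbit directions. The gradient soliton equation \eqref{eq_soliton} therefore forces $\text{Ric}(\partial_t, X^*) = 0$ for every $X \in \mathfrak{su}(2)$. Using the standard cohomogeneity-one formulas (cf.\ \cite{DancerWang2011}), this mixed Ricci component can be expressed in terms of the shape operator $L(t) = \tfrac{1}{2} A(t)^{-1} A'(t)$ and the structure constants of $\mathfrak{su}(2)$. The crucial algebraic point, which is the content of Proposition 2 of \cite{Buttsworth2025}, is that for the three-dimensional \emph{unimodular} Lie algebra $\mathfrak{su}(2)$ this reduces to the commutator condition
\begin{equation*}
[A(t), A'(t)] = 0 \quad \text{as symmetric endomorphisms of } \mathfrak{n},
\end{equation*}
because unimodularity kills the divergence-type trace terms and leaves only the commutator. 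I would carry out this computation directly in the auxiliary basis as verification.

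Commuting symmetric operators are simultaneously diagonalizable. Since $[A(t), A'(t)] = 0$, one can choose an orthonormal basis of $\mathfrak{n}$ that is an eigenbasis of both $A(t)$ and $A'(t)$ at each $t$; the corresponding family is locally constant in $t$ because at any $t$ where the eigenvalues of $A(t)$ are simple, $A'$ is diagonal in that eigenbasis and so the eigenvectors have zero time derivative. Eigenvalue crossings are either isolated (by real-analyticity of the soliton ODE after gauge-fixing, cf.\ \S 3.2) or permanent on a subinterval, in which case any orientation-preserving rotation of the degenerate subspace works. Fixing orientation produces a single $O \in SO(3)$, and the adjoint action of $SO(3) \cong \mathrm{SU}(2)/\{\pm 1\}$ on $\mathfrak{su}(2)$ is by Lie algebra automorphisms, so $X_i := \sum_j O_{ji}\, e_j$ still satisfies $[X_i, X_{i+1}] = 2\, X_{i+2}$. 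In the dual coframe $\omega^i$ to $X_i^*$ the metric has the form \eqref{eq_metricfunctions} with $f_i(t)^2$ equal to the eigenvalues of $A(t)$; uniqueness up to permutation follows from the spectral theorem.

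The main obstacle is the reduction of the mixed Ricci vanishing to the commutator identity $[A, A'] = 0$: this is the single step where unimodularity of $\mathrm{SU}(2)$ is used essentially, since for non-unimodular groups an extra divergence-type term obstructs the clean commutator form. A secondary, technical subtlety is ensuring smoothness of the common eigenframe across eigenvalue crossings, which is handled by the commutator condition together with analyticity of the soliton ODE system.
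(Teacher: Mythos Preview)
Your approach is correct and arrives at the same diagonalization, but the route differs from the paper's in one essential respect. You derive the pointwise identity $[A(t),A'(t)]=0$ from the vanishing of the mixed Ricci components (this is indeed equivalent to the paper's condition $(g_t^{ii}-g_t^{jj})\,g'_{t,ij}=0$ once $A$ is put in diagonal form) and then attempt to build a global constant eigenframe by tracking eigenvectors through $t$ and invoking analyticity at eigenvalue crossings. The paper instead uses this information only at a \emph{single} time $t_0$: one rotates so that both $A(t_0)$ and $A'(t_0)$ are diagonal, observes that the diagonal ansatz is preserved by the full Ricci soliton ODE system, and appeals to ODE existence and uniqueness to conclude that $A(t)$ stays diagonal for all $t$. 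This sidesteps the crossing analysis entirely and does not require real-analyticity of the metric. Your treatment of isolated crossings is the one genuinely incomplete step: you assert the constant frame extends across such points but do not say how (Rellich's theorem on analytic eigenvector selection, followed by analytic continuation of a locally constant family, would close this gap). The paper's uniqueness argument is also slightly sharper than ``spectral theorem'': the spectrum fixes the multiset $\{f_1^2,f_2^2,f_3^2\}$ at each $t$, but to conclude the labelling is globally well-defined up to a \emph{single} permutation one uses that equality of two $f_i$ on any open subinterval forces equality on all of $I$, which the paper extracts from the soliton ODEs.
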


\begin{proof}
    The proof follows as in \cite{Buttsworth2025} which generalizes the Bianchi IX (Einstein) case explicitly shown in \cite{Dammerman2009} (which was already known --- see \cite{DancerStrachan1994} for example). As the automorphism group of bases satisfying the Lie bracket relations in the theorem is $\mathrm{SO}(3)$, we may take $X_1,X_2,X_3$ to diagonalize the matrix $(g_{t})_{ij}(t_0)$ for some $t_0 \in I$. As the soliton potential is invariant, $\text{Hess}(\partial_t,T_{c(t)}(G \cdot c(t))) = 0$ and (\refeq{eq_soliton}) implies $\text{Ric}(\partial_t,T_{c(t)}(G \cdot c(t))) = 0$ too. In coordinates for a diagonal basis at $t_0$, $\text{Ric}(\partial_t,T_{c(t)}(G \cdot c(t))) = 0$ becomes
    \begin{equation}
        \label{eq_to_diag}
        (g_t^{ii}(t_0)-g_t^{jj}(t_0))g_{t \; ij}'(t_0) = 0.
    \end{equation} 
    Equation (\refeq{eq_to_diag}) implies the basis can be rotated to also ensure that $(g_{t})'_{ij}(t_0)$ is diagonal too. Standard ODE existence and uniqueness results imply that $g$ is diagonal using the other Ricci solitons equations (see equations (2.14)--(2.16) of \cite{DancerWang2011}). 
    
    The one-parameter family of metrics ${(g_t)}_{t \in I}$ determines $f_1,f_2,f_3$ up to permutation as the frame is determined up to permutation at any point unless two functions are equal on an open interval by (\refeq{eq_to_diag}). If two functions are equal on any open interval, they are identical (see (\refeq{eq_ODE1})--(\refeq{eq_ODEi})) and the choice of corresponding one-forms does not affect these functions.
\end{proof}

\begin{remark}
    Metrics satisfying (\refeq{eq_metricfunctions}) are $\mathrm{U}(2)$-invariant if and only if two of the functions $f_1,f_2$ or $f_3$ are identical.
\end{remark}

\begin{remark}
    The $\mathrm{SO}(3)\times \mathrm{SO}(2)$ invariant metrics are necessarily diagonal (i.e., doubly warped) by the isotropy representation on $\mathfrak{n}$ which decomposes into its one dimensional abelian part $\mathfrak{a}$ and the two dimensional $Ad_H$-invariant subspace of $\mathfrak{su}(2)$. Choose a basis $a \in \mathfrak{a}$ and $X_1,X_2,X_3 \in \mathfrak{su}(2)$ satisfying the same Lie bracket relations as in Theorem \refeq{thm_diagonalization}, such that $X_1$ is tangent to the chosen $\mathrm{SO}(2) \times 1$ isotropy subgroup. One computes that, for any $h \in H$, $(dL_h)(X^*_{c(t)}) = (Ad(h)(X))^*_{c(t)}$ and so for any invariant metric:
    \begin{alignat*}{2}
    g(a^*,X_{2,3}^*) &= g((Ad_{\exp(\frac{\pi}{2} X_1)}a)^*,(Ad_{\exp(\frac{\pi}{2} X_1)}X_{2,3})^*) &{}={}& -g(a^*,X_{2,3}^*),\\
    g(X_2^*,X_3^*) &= g((Ad_{\exp(\frac{\pi}{4} X_1)}X_2)^*,(Ad_{\exp(\frac{\pi}{4} X_1)}X_3)^*) &{}={}& -g(X_3^*,X_2^*),\\
    g(X_2^*,X_2^*) &= g((Ad_{\exp(\frac{\pi}{4} X_1)}X_2)^*(Ad_{\exp(\frac{\pi}{4} X_1)}X_2)^*) &{}={}& g(X_3^*,X_3^*).
    \end{alignat*}
    Therefore the metric is diagonal: $g = dt^2 + f_1(t)^2(da^*)^2 + f_2(t)^2((dX_2^*)^2 +(dX_3^*)^2)$.
\end{remark} 

The frame in Theorem \refeq{thm_diagonalization} can be used to calculate the Ricci curvature of the metric as in Proposition 1.14 of \cite{GroveZiller2002} for example. This reveals that a metric satisfying (\refeq{eq_cohomogeneityonemetric}) that is invariant under an almost effective cohomogeneity one $\mathrm{SU}(2)$-action is a Ricci soliton only if it satisfies (\refeq{eq_metricfunctions}) for an invariant potential function $u$ and an Einstein constant $\lambda$ such that $u,f_i:I\to\mathbb{R}$ satisfy the following system:
\begin{alignat}{1}
    \lambda &= - \frac{f_1''}{f_1}-\frac{f_2''}{f_2}-\frac{f_3''}{f_3} + u''; \text{  and} \label{eq_ODE1} \\
    \lambda &= -\frac{f_i''}{f_i}+\frac{f_i'}{f_i}\left(u'- \frac{f_j'}{f_j}-\frac{f_k'}{f_k}\right)+ 2 \frac{f_i^4-{(f_j^2-f_k^2)}^2}{f_i^2f_j^2f_k^2}, \label{eq_ODEi}
\end{alignat}
for all $i,j,k \in \{1,2,3\}$, $i\neq j \neq k \neq i$. Moreover functions satisfying (\refeq{eq_ODE1}) and (\refeq{eq_ODEi}) correspond to a Ricci soliton with these metric functions if the metric can be extended to the singular orbits smoothly. 

Smoothness conditions for the metric in (\refeq{eq_metricfunctions}) and the soliton potential can be written in terms of $f_i$ and $u$, which gives a boundary value problem we analyze in \S 4 and beyond. The exact boundary conditions depend on the choice of principal and non-principal stabilizer groups. First, if the soliton potential is invariant and smooth around a singular orbit, then it is smooth on the singular orbit whenever it is smooth in the direction $\partial_t$, i.e., along an extension of the geodesic. In particular, it suffices that the odd derivatives of the soliton potential $u:I \to \mathbb{R}$ vanish at the boundary:
\begin{equation}
    \label{eq_u_smoothness}
    u^{(odd)}(0)=0.
\end{equation}
Smoothness conditions for $\mathrm{SU}(2)$-invariant actions can be written in terms of $f_1,f_2,f_3$ using the results in \cite{VerdianiZiller2022}, again via the frame in Theorem \refeq{thm_diagonalization}. The conditions are given in (\refeq{eq_boundaryzero}) and (\refeq{eq_boundarynonzeroSU2}) below, where we again use the convention $0 = \inf I$ when $\partial (M_{reg}/G)$ is non-empty. The conditions are similar at $T = \sup I$ if $T<\infty$ as we remark below. All smoothness conditions required in this work are equivalent to either: 
\begin{itemize}
\item the smoothness conditions of a metric invariant under the standard fixed point action of $\mathrm{SU}(2)$ on $\mathbb{C}^2$:
\begin{equation}
    \label{eq_boundaryzero}
    f_i'(0) = 1 \; \; \; \text{ and } \; \; \; f_i^{(even)}(0)=0
\end{equation}
for all $i=1,2,3$; or
\item the smoothness conditions at the singular orbit of a metric invariant under the standard $\mathrm{SU}(2)$-action on $\mathcal{O}(-n)$ for any $n \in \mathbb{Z}_{>0}$:
\begin{equation}
    \label{eq_boundarynonzeroSU2}
    {f_1(t)}^2 = n^2t^2 + t^4\phi_1(t^2) \; \; \; \text{ and } \; \; \;  {f_{2,3}(t)}^2 = \phi_2(t^2) \pm t^{4/n}\phi_3(t^2)
\end{equation}
on $(-\varepsilon,\varepsilon)$ for some $\varepsilon>0$, where $f_1$ is chosen without loss of generality to vanish on the singular orbit, $\phi_1,\phi_2,\phi_3: \mathbb{R} \to \mathbb{R}$ are any smooth functions such that $\phi_2(0) >0$ and $\phi_3$ vanishes if $n \not\in\{1,2,4\}$. 
\end{itemize}

If $M$ is simply-connected and compact, then $0<T<\infty$ and one computes the smoothness conditions at $T$ by applying the transformation $t \mapsto T-t$ and potentially applying some permutation of indices $\{1,2,3\}$ to once again arrive at (\refeq{eq_boundaryzero}) or (\refeq{eq_boundarynonzeroSU2}). In particular, the same function or number of functions need not vanish at $0$ and $T$. The possible combinations of smoothness conditions are given in Table \refeq{tab_compact_smoothness} and are easily computed in almost all scenarios again using the results in \cite{VerdianiZiller2022} when projecting $H$ onto $K_{\pm}$. The remaining special cases are when the group diagram is $\mathbb{Z}_n \subseteq \mathrm{U}(1),\mathrm{U}(1) \subseteq \mathrm{SU}(2)$ for $n=1$ and $n=2$. Since $N(\mathbb{Z}_2)_0 = \mathrm{SU}(2) = N(1)_0$, for either action, we may replace the second circle group with any other while achieving an equivalent action as per Theorem \refeq{thm_equiv_diagrams}. While these group diagrams are from equivalent group actions, we must consider each individually as the geodesic and hence the class of invariant metrics of the form (\refeq{eq_cohomogeneityonemetric}) may not be the same. Nevertheless, the diagonalization of Theorem \refeq{thm_diagonalization} and the dependence of equations (\refeq{eq_ODE1})--(\refeq{eq_ODEi}) on only the Lie bracket relations imply that the only other boundary conditions we must investigate are when different functions vanish at the boundary (see Example \refeq{ex_sum_of_round}), i.e., a permutation of indices.

By Theorem \refeq{thm_diagonalization}, if not all functions vanish at both ends, we may choose $f_1$ to vanish as $t \to 0$ and $f_3$ to vanish as $t \to T$ as in Table \refeq{tab_compact_smoothness}. Altogether, these smoothness conditions are necessary and sufficient when the metric functions are smooth on $I$ or equivalently when the metric is smooth on $M_{reg}$. 

We have therefore proven the following result where $M_n$ is the manifold from the group diagram $\mathbb{Z}_n \subseteq \mathrm{U}(1),\mathrm{U}(1) \subseteq \mathrm{SU}(2)$, i.e., it is diffeomorphic to $\text{Bl}_1(\mathbb{CP}^2) = \mathbb{CP}^2\# \overline{\mathbb{CP}^2}$ if $n$ is odd and $\mathbb{S}^2\times \mathbb{S}^2$ otherwise.

\begin{lemma}
\label{lem_BVP}
    Suppose $\mathrm{SU}(2)$ acts with a cohomogeneity one action on a smooth $4$-dimensional manifold $M$. A smooth $\mathrm{SU}(2)$-invariant metric $g$ on $M$ is a Ricci soliton if there exist functions $f_1,f_2,f_3,u:I \to \mathbb{R}$ satisfying: 
\begin{enumerate}
    \item (\refeq{eq_ODE1})--(\refeq{eq_ODEi}) on $I$;
    \item $u$ has vanishing odd derivatives on $\partial I$; and
    \item $f_1,f_2,f_3$ satisfy one of the corresponding smoothness condition(s):
    \begin{itemize}
        \item (\refeq{eq_boundaryzero}) if $M$ is diffeomorphic to $\mathbb{R}^4$;
        \item (\refeq{eq_boundarynonzeroSU2}) if $M$ is diffeomorphic to $\mathcal{O}(-n)$ with consistent $n$; or
        \item as in Table \refeq{tab_compact_smoothness} if $M$ is compact.
    \end{itemize}        
\end{enumerate}
    Conversely, a collection of four functions satisfying all the above conditions establishes a smooth $\mathrm{SU}(2)$-invariant Ricci soliton and its corresponding smooth manifold.
\end{lemma}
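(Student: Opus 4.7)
The plan is to assemble the lemma directly from the machinery already established in Section~3. The statement is genuinely an ``if and only if'', so I would split the argument into the forward direction (soliton $\Rightarrow$ four functions with the listed properties) and the converse (four functions $\Rightarrow$ smooth invariant soliton), noting that both directions reduce to combining Theorem~\ref{thm_diagonalization}, the Ricci computation on principal orbits, and the smoothness prescriptions from \cite{VerdianiZiller2022}.

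For the forward direction, I would first invoke Theorem~\ref{thm_diagonalization} to diagonalize the metric on $M_{\text{reg}}$ in the form (\refeq{eq_metricfunctions}) with smooth positive functions $f_1,f_2,f_3:I\to\mathbb{R}$, and use invariance of the soliton potential to write $u$ as a smooth function on $I$. Substituting this diagonalized metric into (\refeq{eq_soliton}) and computing the Ricci tensor in the adapted frame $dt,\omega_1,\omega_2,\omega_3$ (exactly as in the standard cohomogeneity-one formulas such as Proposition~1.14 of \cite{GroveZiller2002}) yields equations (\refeq{eq_ODE1})--(\refeq{eq_ODEi}) on $I$. Invariance and smoothness of $u$ at a singular orbit force $u$ to depend only on $t$ near the orbit and be even in $t$ after extending the geodesic across the orbit, which gives (\refeq{eq_u_smoothness}). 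Finally, smoothness of $g$ at the singular orbit is translated into the conditions (\refeq{eq_boundaryzero}), (\refeq{eq_boundarynonzeroSU2}), or the entries of Table~\ref{tab_compact_smoothness}, depending on the group diagram, via the general criterion of \cite{VerdianiZiller2022} applied to the projection of $H$ into $K$ (or $K_\pm$).

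For the converse, I would start with the four functions, define $g$ on $M_{\text{reg}}$ by (\refeq{eq_metricfunctions}) and $u$ pulled back from $I$, and then use the smoothness conditions (item~3 of the lemma together with (\refeq{eq_u_smoothness})) to extend both $g$ and $u$ smoothly across the singular orbit(s), again via \cite{VerdianiZiller2022}. By construction $g$ and $u$ are $G$-invariant. The ODE system (\refeq{eq_ODE1})--(\refeq{eq_ODEi}) then states exactly that $\mathrm{Ric}(g)+\tfrac12 L_{\nabla u}g=\lambda g$ at every point of $M_{\text{reg}}$, and since $M_{\text{reg}}$ is open and dense in $M$ and both sides of the Ricci soliton equation are continuous (in fact smooth) tensor fields on $M$, the identity extends to all of $M$.

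The only nontrivial step, and therefore the one I would treat most carefully, is the translation between smoothness of the invariant metric at a singular orbit and the algebraic conditions on $f_1,f_2,f_3$ in (\refeq{eq_boundaryzero})--(\refeq{eq_boundarynonzeroSU2}) and Table~\ref{tab_compact_smoothness}. This is where the distinction between the cases $n\in\{1,2,4\}$ and $n\notin\{1,2,4\}$ enters (through the vanishing of $\phi_3$), and where the special group diagrams $\mathbb{Z}_n\subseteq\mathrm{U}(1),\mathrm{U}(1)\subseteq\mathrm{SU}(2)$ with $n\in\{1,2\}$ must be verified by hand using the freedom from Theorem~\ref{thm_equiv_diagrams} and the permutation ambiguity of Theorem~\ref{thm_diagonalization}. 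Everything else is a routine assembly, so the proof is really a bookkeeping argument that packages the preceding subsections into a clean boundary value problem.
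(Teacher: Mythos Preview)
Your proposal is correct and mirrors the paper's approach: the lemma is presented there as a summary of \S3.1 (``We have therefore proven the following result''), and the ingredients you list---Theorem~\ref{thm_diagonalization} for diagonalization, the Ricci computation yielding (\ref{eq_ODE1})--(\ref{eq_ODEi}), the smoothness criteria from \cite{VerdianiZiller2022}, and the special handling of the $n\in\{1,2\}$ group diagrams via Theorem~\ref{thm_equiv_diagrams} and the permutation ambiguity---are exactly the pieces the paper assembles. Your explicit forward/converse split and the density argument for the converse are a slightly more organized write-up of what the paper leaves implicit, but the content is the same.
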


\renewcommand{\arraystretch}{1.35}
\begin{table}[h!]
\begin{center}
    \begin{tabular}{||c|| c | c ||} 
        \hline
        $M$ & Smoothness as $t\to 0$ & Smoothness as $T-t\to 0$ \\ [0.5ex] 
        \hline\hline
        $\mathbb{S}^4$ & (\refeq{eq_boundarynonzeroSU2}) where $n=4$ & (\refeq{eq_boundarynonzeroSU2}) where $n=4$ and $(f_1,f_3) \mapsto (f_3,f_1)$ \\
        \hline
        $\mathbb{CP}^2$ & (\refeq{eq_boundarynonzeroSU2}) where $n=4$ & (\refeq{eq_boundarynonzeroSU2}) where $n=2$ and $(f_1,f_3) \mapsto (f_3,f_1)$ \\
        \hline
        $\mathbb{S}^4$ & (\refeq{eq_boundaryzero}) & (\refeq{eq_boundaryzero}) \\
        \hline
        $\mathbb{CP}^2$ & (\refeq{eq_boundaryzero}) & (\refeq{eq_boundarynonzeroSU2}) where $n=1$ \\
        \hline
        $M_n$ & (\refeq{eq_boundarynonzeroSU2}) & (\refeq{eq_boundarynonzeroSU2}) \\
        \hline
        $M_n$ \; $n \in \{1,2\}$ & (\refeq{eq_boundarynonzeroSU2}) & (\refeq{eq_boundarynonzeroSU2}) where $(f_1,f_3) \mapsto (f_3,f_1)$  \\
        \hline
    \end{tabular}
    \caption{Smoothness conditions of Ricci solitons on $\mathrm{SU}(2)$-invariant manifolds in Table \refeq{tab_compact}, written in the same order.}\label{tab_compact_smoothness}
\end{center}
\end{table}
\renewcommand{\arraystretch}{1}

\subsection{Short-time existence}
For the remainder of this section, we concern ourselves with proving the short-time existence of solutions to the boundary value problem from \S3.1 and developing convenient parameter spaces for them. 

It should be noted that only the sign of $\lambda$ in (\refeq{eq_soliton}) is important as one produces Ricci solitons for any $\lambda$ of the same sign by rescaling the metric. As a result, the parameter space of steady Ricci solitons is one dimension smaller than the parameter spaces for shrinkers and for expanders. In particular, if the system $(f_1,f_2,f_3,u)$ of functions satisfies the conditions of Lemma \refeq{lem_BVP} for Einstein constant $\lambda$, then the functions
    \[\overline{f_i}(t) = \frac{f_i(ct)}{c} \; \; \; \text{ and } \; \; \; \overline{u}(t) = u(ct)\]
satisfy the same set of conditions in Lemma \refeq{lem_BVP} (after rescaling the interval $I$ appropriately) with $\overline{\lambda} = c^2\lambda$ for any $c>0$. Consequently, we set $\lambda = -1$ when investigating expanding solitons and $\lambda = 1$ for shrinking solitons.

To develop short-time existence and construct a solution which we later perturb, we convert the 4 equations (\refeq{eq_ODE1})--(\refeq{eq_ODEi}) into a first order system of 7 ODEs using the transformation $L_i = f_i'/f_i$, $R_i = {f_i}/(f_j f_k)$ and $\xi = L_1+L_2+L_3-u'$:
\begin{alignat}{1}
    \xi' &= -L_1^2-L_2^2-L_3^2-\lambda; \label{eq_xiSU2} \\
    R_i' &= R_i(L_i-L_j-L_k); \text{ and} \label{eq_RiSU2} \\
    L_i' &= -\xi L_i+ 2R_i^2 - 2{(R_j-R_k)}^2 - \lambda, \label{eq_LiSU2}
\end{alignat}
where we always require $i,j,k \in \{1,2,3\}$, $i\neq j \neq k \neq i$ as before.

We define the parameters for the metrics on $\mathbb{R}^4$ referenced in Theorem \refeq{thm_expanders} as follows:
\begin{equation}
    \label{eq_a_i}
    a_i := \frac{f_i'''(0)}{3} = \lim_{t \to 0} \frac{L_i(t)t-1}{t}
\end{equation}
for each $i \in \{1,2,3\}$. For convenience we also set 
\begin{equation}
    \label{eq_alpha}
    \alpha := -u''(0) = \lim_{t \to 0} \frac{\xi(t)-L_1(t)-L_2(t)-L_3(t)}{t},
\end{equation}
which is not independent of $(a_1,a_2,a_3)$ as it can be written as $\alpha =-\lambda-3a_1-3a_2-3a_3$ by computing $a_1+a_2+a_3+\alpha = (\xi - {3}/{t})'|_{t=0} = -\lambda-2(a_1+a_2+a_3)$ with (\refeq{eq_xiSU2}). We now give the short-time existence of $\mathrm{SU}(2)$-invariant Ricci solitons about fixed points.

\begin{lemma}\label{lem_fixedSTE}
For all $\lambda\in\mathbb{R}$ there exists a continuous function $\Theta_1:\mathbb{R}^3 \to \mathbb{R}^7$ with the following properties:
\begin{itemize}
    \item for all $a = (a_1,a_2,a_3) \in \mathbb{R}^3$ there exists some $\tau>0$ depending continuously on $a$ such that there is a unique solution of (\refeq{eq_xiSU2})--(\refeq{eq_LiSU2}) satisfying (\refeq{eq_a_i}) whose evaluation at $t=\tau$ coincides with $\Theta_1(a)$;
    \item the corresponding functions $(f_1,f_2,f_3,u')$ satisfy the smoothness conditions in (\refeq{eq_u_smoothness})--(\refeq{eq_boundaryzero});
    \item if $\alpha=-\lambda-3a_1-3a_2-3a_3 = 0$, then $\xi = L_1 + L_2 + L_3$ and the corresponding metric is Einstein; and
    \item if $a_i=a_j$, then $L_i=L_j$, $R_i=R_j$ and the corresponding metric is $\mathrm{U}(2)$-invariant, moreover if $a_k=4a_i$ too, then $R_k=L_i$, $\xi = L_k+2R_k+{\alpha}/{R_i}$ and the corresponding metric is also K\"ahler.
\end{itemize}
\end{lemma}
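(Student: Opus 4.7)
The lemma asks for three things: short-time existence of smooth solutions of (\refeq{eq_xiSU2})--(\refeq{eq_LiSU2}) satisfying the boundary behavior (\refeq{eq_u_smoothness})--(\refeq{eq_boundaryzero}), parameterized by $(a_1, a_2, a_3)$; and three algebraic consequences of specializing the parameters. The plan is to treat the short-time existence as a Fuchsian initial value problem, and then to treat each specialization as the invariance of an explicit algebraic submanifold of phase space under the ODE flow.

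The obstacle in the first step is the singular behavior of the variables at $t = 0$: under (\refeq{eq_boundaryzero}) one has $L_i, R_i \sim 1/t$ and $\xi \sim 3/t$. I would introduce the regularized variables $\hat L_i = t L_i - 1$, $\hat R_i = t R_i - 1$, $\hat \xi = t \xi - 3$ and verify by direct substitution that (\refeq{eq_xiSU2})--(\refeq{eq_LiSU2}) becomes a Fuchsian system of the form
\begin{equation*}
t \, y' = A y + t F(t, y),
\end{equation*}
where $y = (\hat L_1, \hat L_2, \hat L_3, \hat R_1, \hat R_2, \hat R_3, \hat \xi)$, $A$ is an explicit constant $7 \times 7$ matrix arising from the leading $1/t$ contributions, and $F$ is smooth in $(t, y)$. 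A direct computation of the spectrum of $A$ (whose integer eigenvalues correspond to the Taylor orders in the $f_i, u$ expansions compatible with (\refeq{eq_boundaryzero}) and (\refeq{eq_u_smoothness})) shows that the space of smooth solutions is three-dimensional and is naturally coordinatized by $(a_1, a_2, a_3)$ as in (\refeq{eq_a_i}). Comparing the second-order Taylor expansion of $\xi' + \sum L_i^2 + \lambda$ against (\refeq{eq_xiSU2}) pins down $\alpha = -u''(0) = -\lambda - 3(a_1 + a_2 + a_3)$. The application of the Fuchsian existence theorem as in \cite{Buttsworth2025} yields a smooth solution on $[0, \tau(a)]$ with continuous dependence on the parameter, and $\Theta_1(a)$ is defined by evaluating at $t = \tau(a)$ for some such continuously-chosen $\tau(a) > 0$.

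For the Einstein specialization, the condition $\alpha = 0$ is precisely $u''(0) = 0$. I would make the ansatz $u' \equiv 0$, equivalently $\xi \equiv L_1 + L_2 + L_3$, and verify that this defines an invariant submanifold of the flow: differentiating $\xi - L_1 - L_2 - L_3$ using (\refeq{eq_xiSU2})--(\refeq{eq_LiSU2}) and reducing, the result is an algebraic expression in $L_i, R_i$ that vanishes identically --- this is the contracted second Bianchi identity for the cohomogeneity-one Ricci tensor, which already underlies the consistency of (\refeq{eq_ODE1})--(\refeq{eq_ODEi}). Uniqueness of the regularized IVP then gives $u' \equiv 0$ on $[0, \tau(a)]$, so the resulting soliton is Einstein.

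Each of the two symmetry reductions in the final bullet is handled by an analogous invariant-submanifold argument. If $a_i = a_j$, then the pair $L_i = L_j$ and $R_i = R_j$ cuts out an invariant subspace by the symmetry $i \leftrightarrow j$ of (\refeq{eq_xiSU2})--(\refeq{eq_LiSU2}), and the equalities hold at $t = 0$ by the matching of Taylor data, so uniqueness propagates them for all $t$ and delivers $\mathrm{U}(2)$-invariance via the remark following Theorem \refeq{thm_diagonalization}. For the K\"ahler refinement with $a_k = 4 a_i$, I would add the candidate relations $R_k = L_i$ and $\xi = L_k + 2 R_k + \alpha/R_i$, check that their low-order Taylor data at $t = 0$ are forced by $a_k = 4 a_i$ on the $\mathrm{U}(2)$ locus, and then verify by direct differentiation that these two relations (together with the $\mathrm{U}(2)$ ones) define an invariant submanifold of the flow, so that uniqueness again propagates them. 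The main technical obstacle throughout is the explicit Fuchsian eigenvalue analysis of $A$ in the first step, including resolving the resonances that would generically produce logarithmic obstructions; after that, each specialization reduces to a closed-form algebraic closure check on the ODE system.
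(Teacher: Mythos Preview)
Your overall plan matches the paper's: a Fuchsian/singular ODE analysis for existence (the paper writes $\eta = (\eta_0,\dots,\eta_6)$ with $\eta' = t^{-1}A\eta + K + B(\eta)$ and explicitly diagonalizes $A$ with eigenvalues $-5,-5,-3,-2,1,1,1$, the triple eigenvalue $1$ giving the three parameters $a_i$), followed by invariance arguments for the specializations. Your treatment of the $\mathrm{U}(2)$ reduction via the $i\leftrightarrow j$ symmetry and uniqueness is correct and slightly slicker than the paper's explicit $2\times 2$ singular system for $(L_i-L_j,R_i-R_j)$.

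There is, however, a genuine gap in your Einstein step. You assert that differentiating $W:=\xi-\sum_i L_i$ along (\refeq{eq_xiSU2})--(\refeq{eq_LiSU2}) yields an expression that ``vanishes identically'' by Bianchi. It does not: a direct computation gives
\[
W' \;=\; -\sum_i L_i^2 + 2\lambda + \xi\sum_i L_i - 2\sum_i R_i^2 + 2\sum_i (R_j-R_k)^2,
\]
and even on $\{W=0\}$ this is $2\lambda + 2\sum_{i<j}L_iL_j + 2\sum_i R_i^2 - 4\sum_{i<j}R_iR_j$, which is not identically zero (try $L_i\equiv 1$, $R_1=1,R_2=2,R_3=3$, $\lambda=0$). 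The hypersurface $\{\xi=\sum L_i\}$ is \emph{not} invariant under the $7$-dimensional flow; the Bianchi identity has already been absorbed into the consistency of (\refeq{eq_xiSU2})--(\refeq{eq_LiSU2}) and supplies no further constraint. What the paper does instead (and what its K\"ahler argument makes explicit) is the reverse: pass to the \emph{reduced} Einstein system obtained by substituting $\xi=\sum L_i$ into (\refeq{eq_RiSU2})--(\refeq{eq_LiSU2}), observe that this $6$-dimensional system carries the Hamiltonian constraint $\mathcal{C}=\sum_{i<j}L_iL_j+\sum_i R_i^2-2\sum_{i<j}R_iR_j+\lambda$ as a genuine first integral (this is where Bianchi enters), compute from the first-order expansions that $\mathcal{C}(0)=3\sum a_i+\lambda=-\alpha$, and conclude that when $\alpha=0$ the Einstein solution also satisfies (\refeq{eq_xiSU2}), hence by uniqueness coincides with the $\alpha=0$ solution of the full system. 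Your K\"ahler sketch should be read the same way: the paper does not show $\{R_k=L_i\}$ is invariant in the $\mathrm{U}(2)$ system directly, but rather builds the further-reduced system (\refeq{eq_U2_Kahler}), finds $Z=\lambda+R_k(\xi+L_k-4R_i)$ satisfying $Z'=-2R_kZ$ there, checks $Z(0)=0$ exactly when $a_k=4a_i$, and then identifies with the full solution by uniqueness.
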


\begin{proof}
In the transformed system, the boundary conditions in (\refeq{eq_u_smoothness}) and (\refeq{eq_boundaryzero}) imply that solutions which give smooth metrics (and potential functions) are of the form:
\[\xi = \frac{3}{t} + \eta_0(t), \; \;  L_i = \frac{1}{t} + \eta_{i}(t) \; \text{ and } \; R_i = \frac{1}{t} + \eta_{i+3}(t),\]
for each $i \in \{1,2,3\}$, where each $\eta_k$ is smooth and $\eta_k(0) = 0$. The ODEs (\refeq{eq_xiSU2})--(\refeq{eq_LiSU2}) can be written in terms of $\eta = (\eta_0,\ldots,\eta_6)$ by 
\[\eta'(t) = \frac{1}{t}A\eta'(t)+K+B(\eta(t))\]
where 
\[A = \left(\begin{matrix} 
    0 & -2 & -2 & -2 & 0 & 0 & 0\\
    -1 & -3 & 0 & 0 & 4 & 0 & 0\\
    -1 & 0 & -3 & 0 & 0 & 4 & 0\\
    -1 & 0 & 0 & -3 & 0 & 0 & 4\\
    0 & 1 & -1 & -1 & -1 & 0 & 0\\
    0 & -1 & 1 & -1 & 0 & -1 & 0\\
    0 & -1 & -1 & 1 & 0 & 0 & -1\\
\end{matrix}\right), \; \; \; K = \left(\begin{matrix} 
    -\lambda\\
    -\lambda\\
    -\lambda\\
    -\lambda\\
    0\\
    0\\
    0\\
\end{matrix}\right),  \]
and $B:\mathbb{R}^7 \to \mathbb{R}^7$ is smooth with $B(0) = 0$. We note that $A = PDP^{-1}$ where 
\[D = \left(\begin{matrix} 
    -5 & 0 & 0 & 0 & 0 & 0 & 0\\
    0 & -5 & 0 & 0 & 0 & 0 & 0\\
    0 & 0 & -3 & 0 & 0 & 0 & 0\\
    0 & 0 & 0 & -2 & 0 & 0 & 0\\
    0 & 0 & 0 & 0 & 1 & 0 & 0\\
    0 & 0 & 0 & 0 & 0 & 1 & 0\\
    0 & 0 & 0 & 0 & 0 & 0 & 1\\
\end{matrix}\right),  \; \; \; P = \left(\begin{matrix} 
    0 & 0 & 4 & 3 & 4 & 4 & 4\\
    2 & 2 & 2 & 1 & -1 & -1 & 0 \\
    0 & -2 & 2 & 1 & -1 & 0 & -1\\
    -2 & 0 & 2 & 1 & 0 & -1 & -1\\
    -1 & -1 & 1 & 1 & 0 & 0 & 1\\
    0 & 1 & 1 & 1 & 0 & 1 & 0\\
    1 & 0 & 1 & 1 & 1 & 0 & 0\\
\end{matrix}\right) \] \[ \text{ and } P^{-1} = \frac{1}{36}\left(\begin{matrix} 
    0 & 4 & 4 & -8 & -4 & -4 & 8\\
    0 & 4 & -8 & 4 & -4 & 8 & -4\\
    9 & 9 & 9 & 9 & -18 & -18 & -18\\
    -12 & -8 & -8 & -8 & 32 & 32 & 32\\
    3 & -5 & -5 & 7 & -10 & -10 & 14\\
    3 & -5 & 7 & -5 & -10 & 14 & -10\\
    3 & 7 & -5 & -5 & 14 & -10 & -10\\
\end{matrix}\right).\]
Setting $\nu = P^{-1}\eta$, gives another system $\nu'(t) = {1}/{t} \cdot D \nu + P^{-1}K+P^{-1}B(P\nu)$. All entries of $P^{-1}K$ except the third and fourth necessarily vanish. By a standard singular ODE existence and uniqueness argument, there exists a unique smooth solution for $\nu$ (and for $\eta$) on a neighborhood of $0$ for each $(\nu_4'(0),\nu_5'(0),\nu_6'(0)) \in \mathbb{R}^3$ corresponding to the entries of $1$ in $D$. 
For fixed $\lambda \in \mathbb{R}$, the short-time solutions can be parameterized by $a = (a_1,a_2,a_3) \in \mathbb{R}^3$ as we can write $\eta_{i+3}$ in terms of $a_i$ (see (\refeq{eq_fixed_approx})),
\[\nu_{i+3}'(0)= - \frac{\lambda}{12}+a_i-a_j-a_k.\]
The first condition of Lemma \refeq{lem_fixedSTE} will now hold by choosing sufficiently small $\tau >0$. However, to ensure $\Theta_1$ is continuous, for each $m \in \mathbb{Z}_{>0}$, set $\tau_m>0$ a constant such that all solutions with parameters $a \in \overline{B_m(0)}$ exist on $(0,2\tau_m)$. Then let $\tau_r:\mathbb{R}_{\geq 0} \to \mathbb{R}^+$ be a continuous function such that $\tau_r(t) < \tau_m$ whenever $t \leq m-1$ and finally choose $\tau = \tau_r(||a||)$.

The boundary conditions $f_i(0) = 0$ and $f_i'(0) = 1$ follow immediately as $f_i^2 = (R_j R_k)^{-1} = t^2 + O(t^3)$ and by induction, it follows that $\eta^{(even)}(0) = 0$ which reproduces the boundary conditions in (\refeq{eq_boundaryzero}). 

Now note that 
\begin{align*}
\frac{d}{dt}\left(\begin{matrix}
        L_i-L_j \\
        R_i-R_j \\
    \end{matrix} \right) & = \left(\begin{matrix}
        -\xi & 4(R_i+R_j-R_k) \\
        R_i+R_j & -L_k \\
    \end{matrix} \right) \left(\begin{matrix}
        L_i-L_j \\
        R_i-R_j \\
    \end{matrix} \right) \\
& = \left(\frac{1}{t}+ O(t)\right)\left(\begin{matrix}
        -3 & 4 \\
        2 & -1 \\
    \end{matrix} \right)\left(\begin{matrix}
        L_i-L_j \\
        R_i-R_j \\
    \end{matrix} \right) 
\end{align*}
\begin{equation}
    \label{eq_U2condition}
    \frac{d}{dt}S^{-1}\left(\begin{matrix}
        L_i-L_j \\
        R_i-R_j \\
    \end{matrix} \right) = \left(\frac{1}{t}+ O(t)\right)\left(\begin{matrix}
        -5 & O(t^2) \\
        O(t^2) & 1 \\
    \end{matrix} \right)S^{-1}\left(\begin{matrix}
        L_i-L_j \\
        R_i-R_j \\
    \end{matrix} \right), 
\end{equation}
where $S = \left(\begin{matrix}
    -2 & 1 \\
    1 & 1 \\
\end{matrix} \right)$ and $\displaystyle S^{-1} = \frac{1}{3} \left(\begin{matrix}
    -1 & 1 \\
    1 & 2 \\
\end{matrix} \right)$. 
By another standard singular ODE existence and unique argument using equation (\refeq{eq_U2condition}), $-2(R_i-R_j) = L_i-L_j = R_i-R_j$ if and only if $0 = (L_i-L_j)'(0)+2(R_i-R_j)'(0) = 3a_i-3a_j$. This proves the $\mathrm{U}(2)$-invariant part of the final condition of Lemma \refeq{lem_fixedSTE}. For the K\"ahler part, consider the system
\begin{equation}
    \label{eq_U2_Kahler}
    \begin{aligned}
    \xi' &= -L_k^2-2R_k^2-\lambda; \\
    L_k' &= -\xi L_k+ 2R_k^2-\lambda; \\
    R_i' &= -R_i L_k; \; \text{ and} \\
    R_k' &= R_k L_k-2R_k^2.
    \end{aligned}
\end{equation}
We note that the system (\refeq{eq_U2_Kahler}) is the $\mathrm{U}(2)$-invariant subsystem of (\refeq{eq_xiSU2})--(\refeq{eq_LiSU2}) (see (\refeq{eq_U2ODEs})) where $L_i = R_k$. 

If $Z  = \lambda + R_k(\xi+L_k-4R_i)$, then $Z' = -2R_k Z$ in (\refeq{eq_U2_Kahler}). Since $R_k(t)>0$, if $Z(0) = 0$, then $Z$ vanishes uniformly in (\refeq{eq_U2_Kahler}).
Moreover, if $Z  = \lambda + R_k(\xi+L_k-4R_i)$, then $Z = R_k'-L_i'$ in (\refeq{eq_U2ODEs}). In particular, if $a_k=4a_i=4a_j$, then $Z(0) = \lim_{t \to 0}R_k'(t)-L_i'(t) = (a_k-4a_i)/2 = 0$ (see (\refeq{eq_fixed_approx})). Therefore, the solution in (\refeq{eq_U2_Kahler}) where $Z$ vanishes is also a solution of (\refeq{eq_U2ODEs}) and hence is realized in (\refeq{eq_U2ODEs}) by uniqueness. This proves that $a_k=4a_i=4a_j$ implies $L_i = R_k$. The other K\"ahler condition is found by integrating
\[\frac{d}{dt}\ln(|L_k+2R_k-\xi|)=-\frac{d}{dt}\ln(|R_i|).\] 
The Einstein condition is proven similarly, as per the proof of Theorem 4 of \cite{Buttsworth2025}.
\end{proof}

We define the parameters for the metrics on $\mathcal{O}(-n)$ as follows:
\begin{alignat}{2}
    \beta &:= \frac{n}{{f_2(0)}^2} &&= \lim_{t \to 0} \frac{R_1(t)}{t}, \label{eq_beta} \\
    \gamma &:= \frac{\phi_3(0)}{\phi_2(0)} &&= \lim_{t \to 0} \frac{n(L_2(t)-L_3(t))}{4t^{\frac{4}{n}-1}}, \label{eq_gamma}
\end{alignat}
and $\alpha$ as in (\refeq{eq_alpha}). We now develop the short-time existence of gradient Ricci solitons on $\mathcal{O}(-n)$, extending the work in \cite{Buttsworth2025}.

\begin{lemma}\label{lem_SU2STE}
For all $\lambda\in\mathbb{R}$ and $n \in \mathbb{Z}_{>0}$, there exists a continuous function $\Theta_n:\mathbb{R}^3 \to \mathbb{R}^7$ with the following properties:
\begin{itemize}
    \item for all $(\alpha,\beta,\gamma) \in \mathbb{R}^3$, where $\gamma = 0$ if $n\not \in\{1,2,4\}$, there exists some $\tau>0$ depending on $n$ and continuously on $(\alpha, \beta,\gamma)$ such that there is a unique solution of (\refeq{eq_xiSU2})--(\refeq{eq_LiSU2}) satisfying (\refeq{eq_alpha}), (\refeq{eq_beta}) and (\refeq{eq_gamma}) whose evaluation at $t=\tau$ coincides with $\Theta_n(\alpha,\beta,\gamma)$;
    \item the corresponding functions $(f_1,f_2,f_3,u')$ satisfy the smoothness conditions in (\refeq{eq_u_smoothness}) and (\refeq{eq_boundarynonzeroSU2});
    \item if $\alpha=0$, then $\xi = L_1 + L_2 + L_3$ and the corresponding metric is Einstein 
    \begin{itemize} 
        \item Moreover if $\alpha=0$, $n=2$ and $\gamma = \epsilon{\lambda}/{4}$ for some $\epsilon \in \{\pm 1\}$, then $L_1+L_j = 2R_k$, $L_1-L_j=2(R_j-R_1)$ and the metric is K\"ahler, where $j=2$ and $k=3$ when $\epsilon=1$ and vise versa if $\epsilon = -1$;
    \end{itemize}
    \item if $\beta=0$, then $R_1=0$; and
    \item if $\gamma=0$, then $L_2=L_3$ and $R_2=R_3$ and the corresponding metric is $\mathrm{U}(2)$-invariant. 
    \begin{itemize} 
        \item Moreover if $\gamma=0$ and $(4-2\epsilon n)\beta=n\lambda$ for some $\epsilon \in \{\pm 1\}$, then $L_2 = \epsilon R_1$, $\xi = L_1 + 2\epsilon R_1+{\alpha}/(n R_2)$ and the corresponding metric is K\"ahler.
    \end{itemize}
\end{itemize}
\end{lemma}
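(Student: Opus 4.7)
The proof closely parallels that of the previous lemma, but works near the singular orbit of $\mathcal{O}(-n)$ rather than a fixed point. The smoothness ansatz for metrics on $\mathcal{O}(-n)$ prescribes the short-time asymptotics
\[
L_1 \sim \tfrac{1}{t}, \qquad R_{2,3} \sim \tfrac{1}{nt}, \qquad R_1 \sim \beta t, \qquad \xi \sim \tfrac{1}{t},
\]
while the definition of $\gamma$ forces $L_2 - L_3 \sim (4/n)\gamma\, t^{4/n-1}$. The first step is to introduce shifted unknowns $\eta_0 := \xi - 1/t$, $\eta_1 := L_1 - 1/t$, $\eta_{5,6} := R_{2,3} - 1/(nt)$, together with $\eta_2 := L_2$, $\eta_3 := L_3$, and $\eta_4 := R_1$, and substitute into the soliton system. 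This yields a system of the form $t\eta' = A(n)\eta + tK + tB(t,\eta)$ for an explicit $7\times 7$ matrix $A(n)$, a constant vector $K$, and a smooth nonlinearity $B$ with $B(0,0) = 0$. Diagonalizing $A(n)$ in the spirit of the previous lemma then exhibits the free Frobenius coefficients of the smooth solutions.

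For $n \in \{1,2,4\}$ the exponent $4/n$ is a positive integer, so the spectrum of $A(n)$ is compatible with a three-parameter family of smooth short-time solutions produced by the standard singular ODE existence/uniqueness theorem. I would match these free parameters with $(\alpha,\beta,\gamma) \in \mathbb{R}^3$ via the defining limits, and obtain $\Theta_n$ by evaluating the solution at a time $\tau$ chosen as a continuous function of the parameters via the same bump-function construction as in the fixed-point case. For $n \notin \{1,2,4\}$ the ansatz excludes the non-integer power $t^{4/n}$, forcing $\gamma = 0$; I would then show that the pair $(L_2 - L_3, R_2 - R_3)$ satisfies a $2 \times 2$ singular linear ODE derived in the same fashion as the system from the previous lemma, whose only smooth solution vanishes identically. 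This reduces the problem to the $\mathrm{U}(2)$-invariant subsystem, to which the same existence argument applies in one fewer degree of freedom.

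The special-case rigidity statements then reduce to showing that certain scalar auxiliary quantities satisfy linear homogeneous ODEs with vanishing initial data. For the Einstein conclusion, $u'$ can be shown to satisfy such an ODE, as in the proof of Theorem 4 of \cite{Buttsworth2025}, whose vanishing is encoded by $\alpha = 0$ together with $u'(0)=0$. For the two K\"ahler conclusions --- at $n = 2$, $\alpha = 0$, $\gamma = \epsilon\lambda/4$, and at $\gamma = 0$ with $(4-2\epsilon n)\beta = n\lambda$ --- I would introduce auxiliary scalars analogous to $Z = \lambda + R_k(\xi + L_k - 4R_i)$ from the previous lemma, verify by direct computation that the hypothesized parameter relations make $Z(0) = 0$, and conclude $Z \equiv 0$ from the linearity and homogeneity of the $Z$-equation; the stated algebraic identities are then equivalent to $Z \equiv 0$. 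The conclusion $\beta = 0 \Rightarrow R_1 \equiv 0$ is immediate from the homogeneity of the $R_1$-equation in $R_1$, and $\gamma = 0 \Rightarrow L_2 = L_3, R_2 = R_3$ is the same $(L_2 - L_3, R_2 - R_3)$ argument used for $n \notin \{1,2,4\}$. The main technical obstacle is the case-by-case Frobenius spectrum analysis for $n \in \{1,2,4\}$: one must verify that the eigenvalues of $A(n)$ admit exactly three smooth free parameters matching $(\alpha,\beta,\gamma)$, with no resonance obstructing smoothness of the forced $t^{4/n}$ term.
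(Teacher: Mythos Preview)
Your plan is essentially the paper's proof: it too defers the desingularised existence/uniqueness argument to the fixed-point lemma and to Theorem~4 of \cite{Buttsworth2025}, records the eigenvalues of the $t^{-1}$ coefficient matrix as $-1-4/n,\,-1+4/n,\,-2,\,-1,\,-1,\,1,\,1$ (so the $\gamma$-direction enters at order $t^{4/n-1}$, which is smooth precisely when $n\in\{1,2,4\}$), and then handles each rigidity clause by an auxiliary-quantity argument.

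One point where your sketch undersells the work: for the $n=2$ K\"ahler--Einstein case the paper does \emph{not} get away with a single scalar $Z$ satisfying a homogeneous linear ODE. It first passes to the Einstein subsystem with the constraint $L_1+L_2=2R_3$ already imposed, then introduces a \emph{pair} of auxiliaries $\overline{Z}=2\lambda+4R_3(L_3-R_1-R_2+R_3)$ and $\overline{Y}=2R_1-2R_2+L_1-L_2$ whose evolution is coupled, $\overline{Z}'=-2R_3\overline{Z}+2(R_1-R_2)\overline{Y}$ and $\overline{Y}'=\overline{Y}\cdot(2R_1+2R_2-L_3-2R_3)$. One first shows $\overline{Y}\equiv 0$ from the boundary asymptotics at $n=2$, which decouples the $\overline{Z}$-equation, and only then concludes $\overline{Z}\equiv 0$ from $\gamma=\epsilon\lambda/4$; finally uniqueness transports this back to the full system. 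Your phrase ``conclude $Z\equiv 0$ from the linearity and homogeneity of the $Z$-equation'' is too optimistic here---you should expect a $2\times 2$ coupled argument, not a scalar one. The $\mathrm{U}(2)$-invariant K\"ahler clause with $(4-2\epsilon n)\beta=n\lambda$ does go through with a single $Z$ as you describe, with $Z'=\mp 2R_1Z$ depending on the sign of $\epsilon$.
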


\begin{proof}
    Proceed as in the proof of Lemma \refeq{lem_fixedSTE}. Details can be found in Theorem 4 and its proof in \cite{Buttsworth2025}. We note that the eigenvalues of the $t^{-1}$ terms in the desingularised system for (\refeq{eq_xiSU2})--(\refeq{eq_LiSU2}) are, $-1-{4}/{n},-1+{4}/{n},-2,-1,-1,1$ and $1$. Thus, the lowest order difference between $L_2$ and $L_3$ is of order ${4}/{n}-1$ and is parameterized by $\gamma$ in (\refeq{eq_gamma}). 
    
    It remains to prove the K\"ahler quantities are indeed conserved. The $\mathrm{U}(2)$-invariant K\"ahler condition $L_2 = R_1$ is proven exactly as in Lemma \refeq{lem_fixedSTE} when $(4-2n)\beta = n \lambda$. Similarly, the condition $L_2 = - R_1$ is shown to be conserved if $(4+2n)\beta = n \lambda$ by instead replacing $L_1$ with $-R_1$. This gives $Z'=2R_1 Z$ in the corresponding system when $Z = - \lambda + R_1(\xi + 4 R_1+L_1)$, which yields the desired result by uniqueness as before. The K\"ahler Einstein family for $n=2$ also proceeds similarly --- consider the system:
    \begin{equation}
        \label{eq_SU2_kahler1}
        \begin{aligned}
        L_i' &= -(L_3+2R_3) L_i + 2R_i^2-2(R_j-R_k)^2-\lambda; \\
        R_1' &= R_1 (L_1-L_2-L_3);  \\
        R_2' &= R_2(L_2-L_1-L_3); \text{ and} \\
        R_3' &= R_3 (L_3-2R_3).
        \end{aligned}
    \end{equation}
    We note that the system (\refeq{eq_SU2_kahler1}) is the Einstein subsystem of (\refeq{eq_xiSU2})--(\refeq{eq_LiSU2}) where $L_1 + L_2 = 2 R_3$. Set $\overline{Z} = 2\lambda + 4R_3(L_3-R_1-R_2+R_3)$ and $\overline{Y} = 2R_1-2R_2+L_1-L_2$. In the system (\refeq{eq_SU2_kahler1}), we compute 
    \[\overline{Z}' = -2R_3\overline{Z}+2(R_1-R_2)\overline{Y} \; \text{ and } \; \overline{Y}' = \overline{Y}(2R_1+2R_2-L_3-2R_3).\]
    If a solution that satisfies (\refeq{eq_xiSU2})--(\refeq{eq_LiSU2}) and boundary conditions conditions corresponding to (\refeq{eq_u_smoothness}) and (\refeq{eq_boundarynonzeroSU2}) also satisfies (\refeq{eq_SU2_kahler1}), then $\overline{Y}'(t) =  O(t)\cdot \overline{Y}(t)$. Thus,  when $n=2$, a solution of this form has $\overline{Y}(0) = 0$ (see Table \refeq{tab_compact_smoothness_approx}) and consequently $\overline{Y}$ vanishes. In particular, when $n=2$, $\alpha = 0$ and $\gamma = {\lambda}/{4}$ we compute $\overline{Z}(0) = 0$ (again see Table \refeq{tab_compact_smoothness_approx}) which in turn implies $\overline{Z}$ vanishes too.
    
    If the K\"ahler Einstein conditions $L_1 + L_2 = 2 R_3$ and $L_1-L_2=2R_2-2R_1$ hold, then $\overline{Z} = 2R_3'-L_1'-L_2'$. Therefore, as in the proof of Lemma \refeq{lem_fixedSTE}, the uniqueness of solutions to ODEs proves that $\alpha = 0$ and $\gamma = {\lambda}/{4}$ imply that $2R_3-L_1-L_2 = 0$ if $n=2$. The other equality $2R_2-2R_1 = L_1-L_2$ then holds immediately as $\overline{Y}$ vanishes in (\refeq{eq_SU2_kahler1}). 
    
    Finally, we note that this also holds for $\gamma = -{1}/{4}$ after interchanging the subscripts 2 and 3.      
\end{proof}

\begin{remark}
    Following the work in \cite{DancerStrachan1994} ,one finds that the K\"ahler Ricci solitons are exactly those that satisfy 
    \begin{equation}
        \label{eq_Kahler_necessary}
        L_i+L_j = \epsilon R_k \; \text{ and } \; L_i-L_j = \epsilon 2(R_j-R_i),
    \end{equation}
    for some $i,j,k \in \{1,2,3\}$ $i\neq j\neq k \neq i$ and $\epsilon \in \{\pm1\}$. A metric of this form is K\"ahler with respect to the almost complex structure:
    \[J\left(\frac{\partial}{\partial t}\right) = -\epsilon \frac{1}{f_k}X_k, \; \; J\left(-\epsilon\frac{1}{f_k}X_k\right) = -\frac{\partial}{\partial t}, \; \; J\left(\frac{1}{f_i}X_i\right) = \frac{1}{f_j}X_j \; \text{ and } \; J\left(\frac{1}{f_j} X_j\right) = -\frac{1}{f_i}X_i, \]
    where $\epsilon$ changes sign if $j \neq i+1 \mod 3$. The condition $L_i-L_j = \epsilon 2(R_j-R_i)$ is equivalent to the integrability of the above almost complex structure and the other condition is found via K\"ahlerity. This is what we implicitly use to state that the metrics are K\"ahler when (\refeq{eq_Kahler_necessary}) is true in Lemmas \refeq{lem_fixedSTE} and \refeq{lem_SU2STE}. Moreover, writing low order approximations of (\refeq{eq_Kahler_necessary}) in terms of $a_1,a_2,a_3$ or $\alpha,\beta,\gamma$ (see (\refeq{eq_fixed_approx}) and Table \refeq{tab_compact_smoothness_approx}), reveals that the $\mathrm{SU}(2)$-invariant K\"ahler Ricci solitons are exactly the ones listed in Lemma \refeq{lem_fixedSTE} and \refeq{lem_SU2STE}. The initial conditions for K\"ahler metrics in Lemma \refeq{lem_fixedSTE} and \refeq{lem_SU2STE} immediately imply that cohomogeneity one $4$-dimensional $\mathrm{SU}(2)$-invariant gradient K\"ahler Ricci solitons:
    \begin{enumerate}
        \item are $\mathrm{U}(2)$-invariant unless they are on $\mathcal{O}(-2)$ or $M_2$; 
        \item that are steady can only be found on $\mathbb{C}^2$ and $\mathcal{O}(-2)$; and
        \item that are on a compact space can only be found on $\mathbb{CP}^2$, invariant under the fixed point action, or on $M_1$, invariant under the action with $\mathbb{S}^3$ regular orbits.
    \end{enumerate}
    It should be noted that, statement 1. was known in the Einstein case (see \cite{DancerStrachan1994}), statement 2. was known for $\mathrm{U}(2)$ actions (see Lemma 1.2 of \cite{FIK2003}) and statement 3. is trivial as all shrinking gradient K\"ahler Ricci solitons are known.
\end{remark}

\begin{corollary}\label{cor_EAU}
    For every $\lambda \in \mathbb{R}$, $n\in \mathbb{Z}_{>0}$ and $(a_1,a_2,a_3),(\alpha,\beta,\gamma) \in \mathbb{R}^3$ such that $\beta>0$ and $\gamma=0$ if $n \not \in \{1,2,4\}$, there exists a maximal $\overline{\tau} \in (0,\infty]$ and a unique smooth function $\varphi: (0,\overline{\tau}) \to \mathbb{R}^7$, $\varphi = (\xi,L_1,L_2,L_3,R_1,R_2,R_3)$ satisfying (\refeq{eq_xiSU2})--(\refeq{eq_LiSU2}) and either (\refeq{eq_a_i}) or all of (\refeq{eq_alpha}),(\refeq{eq_beta})--(\refeq{eq_gamma}). These functions construct a smooth metric on $\bigcup \{G\cdot c(t): t \in [0,\overline{\tau})\} \subseteq M$. Moreover, if $\overline{\tau}=\infty$ the corresponding metric is on all of $M$ and is complete.
\end{corollary}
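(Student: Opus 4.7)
The plan is to combine the short-time existence in Lemma \refeq{lem_fixedSTE} or Lemma \refeq{lem_SU2STE} with standard ODE continuation and the smoothness criterion of Lemma \refeq{lem_BVP}. First I fix the parameters and invoke Lemma \refeq{lem_fixedSTE} (in the $\mathbb{R}^4$ case, parameterized by $(a_1,a_2,a_3)$) or Lemma \refeq{lem_SU2STE} (in the $\mathcal{O}(-n)$ case, parameterized by $(\alpha,\beta,\gamma)$) to obtain a smooth solution $\varphi$ of \refeq{eq_xiSU2}--\refeq{eq_LiSU2} on some initial interval $(0,\tau)$ realizing the prescribed initial data. Since the right-hand side of the first-order system is polynomial in the seven dependent variables, Picard--Lindel\"of combined with the standard maximal-interval theorem produces a unique maximal smooth extension $\varphi:(0,\overline{\tau}) \to \mathbb{R}^7$ for some $\overline{\tau} \in (0,\infty]$. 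Because $R_i$ satisfies the homogeneous linear equation \refeq{eq_RiSU2}, its sign is preserved on $(0,\overline{\tau})$; in particular $R_1 > 0$ throughout (as $\beta > 0$), and analogously $R_2,R_3 > 0$.

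Next, to recover the metric I set $f_k^2 := (R_i R_j)^{-1}$, which defines smooth positive functions $f_1,f_2,f_3$ on $(0,\overline{\tau})$. A direct computation using \refeq{eq_RiSU2} yields $f_i'/f_i = L_i$, and hence $(f_1,f_2,f_3,u)$ with $u' = L_1+L_2+L_3-\xi$ satisfies the Ricci soliton ODEs \refeq{eq_ODE1}--\refeq{eq_ODEi}. Since the initial data supplied by Lemmas \refeq{lem_fixedSTE} and \refeq{lem_SU2STE} encode exactly the asymptotic behavior at $t = 0$ required by \refeq{eq_boundaryzero} or \refeq{eq_boundarynonzeroSU2}, together with the odd-vanishing condition \refeq{eq_u_smoothness}, Lemma \refeq{lem_BVP} produces a smooth $\mathrm{SU}(2)$-invariant gradient Ricci soliton on the $G$-invariant open set $U := \bigcup \{G\cdot c(t) : t \in [0,\overline{\tau})\} \subseteq M$.

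Finally, assume $\overline{\tau} = \infty$. For $M = \mathbb{R}^4$ or $M = \mathcal{O}(-n)$, the orbit space $M/G$ is a closed ray $[0,\infty)$ by Table \refeq{tab_non_compact}, and the unit-speed geodesic $c$ globally parametrizes it; hence $c([0,\infty))$ meets every orbit and $U = M$. For metric completeness, compactness of each orbit together with the slice theorem makes the continuous projection $\pi : M \to [0,\infty)$, $p \mapsto t(p)$ with $p \in G\cdot c(t)$, proper. Unit speed of $c$ and $G$-invariance of $g$ give $|\pi(p)-\pi(q)| \leq d_g(p,q)$, so any Cauchy sequence in $(M,g)$ has bounded image under $\pi$ and therefore lies in a compact set $\pi^{-1}([0,N])$; completeness then follows from Hopf--Rinow.

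The main obstacle is the smooth extension of the metric across the singular orbit at $t = 0$, which is entirely subsumed into Lemmas \refeq{lem_fixedSTE} and \refeq{lem_SU2STE}. Those lemmas simultaneously provide short-time existence, uniqueness, and the translation of the ODE initial data into the slice-theoretic smoothness conditions \refeq{eq_boundaryzero} or \refeq{eq_boundarynonzeroSU2}. Once they are invoked, everything that remains---continuation to a maximal interval, reconstruction of the $f_i$, and completeness when $\overline{\tau} = \infty$---reduces to a standard ODE-and-compactness argument.
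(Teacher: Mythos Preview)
Your proof is correct and follows essentially the same approach as the paper's own argument: short-time existence from Lemmas \ref{lem_fixedSTE} and \ref{lem_SU2STE}, standard ODE continuation to a maximal interval, reconstruction of the metric functions via $f_k^2 = (R_i R_j)^{-1}$, and completeness by showing Cauchy sequences are trapped in compact orbit-space preimages. You supply more detail than the paper does---in particular, the explicit observation that the $R_i$ preserve sign (needed for the $f_i$ to be well-defined and positive) and the Lipschitz bound $|\pi(p)-\pi(q)| \leq d_g(p,q)$---but the skeleton is identical.
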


\begin{proof}
    The existence and uniqueness of a maximal solution follows from Lemmas \refeq{lem_fixedSTE} and \refeq{lem_SU2STE} and existence and uniqueness to solutions of ODEs. If $\overline{\tau} = \infty$, then any Cauchy sequence in $(M,dt^2+g_t^2)$ is contained in the preimage of $[0,t_0]$ by projection onto the orbit space for sufficiently large $t_0$ and the result follows from boundedness of $f_i|_{[0,t_0]}^2 = (R_j R_k)^{-1}|_{[0,t_0]}$ for all $i \in \{1,2,3\}$.
\end{proof}

\begin{corollary}\label{cor_param}
    Every complete cohomogeneity one $\mathrm{SU}(2)$-invariant expanding gradient Ricci soliton $g$ on a $4$-manifold $M$ can be written in the form (\refeq{eq_metricfunctions}) that satisfies (\refeq{eq_xiSU2})--(\refeq{eq_LiSU2}) and is uniquely determined up to rescaling by the condition $\lambda = -1$ and the parameters 
    \begin{itemize}
        \item $a_1 \geq a_2 \geq a_3 \geq -{\lambda}/{3}-a_1-a_2$ if $M$ is diffeomorphic to $\mathbb{R}^4$;
        \item $\beta >0$,  $\alpha \geq 0$ and $\gamma \geq 0$ if $M$ is diffeomorphic to $\mathcal{O}(-1),\mathcal{O}(-2)$ or $\mathcal{O}(-4)$; or 
        \item $\beta >0$ and $\alpha \geq 0$ if $M$ is diffeomorphic to $\mathcal{O}(-n), n \not \in \{1,2,4\}$.
    \end{itemize}
\end{corollary}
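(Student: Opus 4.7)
The plan is to assemble three earlier ingredients --- the topological classification from Theorem~\refeq{thm_noncompact}, the diagonalisation and ODE reformulation of Theorem~\refeq{thm_diagonalization} and Lemma~\refeq{lem_BVP}, and the local parameterisations of Lemmas~\refeq{lem_fixedSTE}--\refeq{lem_SU2STE} combined with the global extension of Corollary~\refeq{cor_EAU} --- and then to reduce by the residual symmetries of the data to obtain the claimed fundamental domain.

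Starting from a complete cohomogeneity-one $\mathrm{SU}(2)$-invariant expanding gradient Ricci soliton $(M,g,u)$ with $\lambda<0$, I would first rescale via $f_i\mapsto f_i(ct)/c$, $u\mapsto u(ct)$ with $c=\sqrt{-\lambda}$ to normalise $\lambda=-1$. Restricting to actions with a singular orbit, Theorem~\refeq{thm_noncompact} forces $M$ to be diffeomorphic to $\mathbb{R}^4$ or to $\mathcal{O}(-n)$ for some $n\geq 1$. Theorem~\refeq{thm_diagonalization} then produces the diagonal form $g=dt^2+\sum_i f_i(t)^2\omega_i^2$ on $M_{\mathrm{reg}}$, and passing to $L_i,R_i,\xi$ recasts the soliton equation as (\refeq{eq_xiSU2})--(\refeq{eq_LiSU2}); Lemma~\refeq{lem_BVP} encodes smoothness at the singular orbit via (\refeq{eq_u_smoothness}) together with (\refeq{eq_boundaryzero}) or the appropriate (\refeq{eq_boundarynonzeroSU2}).

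Next, I would read off the germ data at the singular orbit. Lemma~\refeq{lem_fixedSTE} identifies smooth germs in the $\mathbb{R}^4$ case with triples $(a_1,a_2,a_3)\in\mathbb{R}^3$, and Lemma~\refeq{lem_SU2STE} identifies smooth germs in the $\mathcal{O}(-n)$ case with triples $(\alpha,\beta,\gamma)\in\mathbb{R}^3$, where $\beta>0$ is automatic from $\phi_2(0)>0$ in (\refeq{eq_boundarynonzeroSU2}) and $\gamma=0$ is forced when $n\notin\{1,2,4\}$ by the same smoothness condition. Corollary~\refeq{cor_EAU} then promotes each germ to a unique maximal smooth solution on $(0,\overline{\tau})$; completeness of $(M,g)$ rules out $\overline{\tau}<\infty$ (otherwise the unit-speed geodesic $c$ would end at an interior point), so $\overline{\tau}=\infty$ and the germ data determine $g$ entirely.

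It remains to cut out the residual overcounting. Theorem~\refeq{thm_diagonalization} pins down $(f_1,f_2,f_3)$ only up to permutation of indices, which permutes $(a_1,a_2,a_3)$ correspondingly; the ordering $a_1\geq a_2\geq a_3$ then selects a canonical representative in the $\mathbb{R}^4$ case. On $\mathcal{O}(-n)$ the function $f_1$ is singled out as the one vanishing at the bolt, so only the transposition $f_2\leftrightarrow f_3$ survives, and by (\refeq{eq_gamma}) it flips the sign of $\gamma$; hence $\gamma\geq 0$ fixes a fundamental domain. The final one-sided inequalities --- $\alpha\geq 0$ for $\mathcal{O}(-n)$, and the equivalent sign constraint $a_3\geq -\lambda/3-a_1-a_2$ for $\mathbb{R}^4$ (using the identity $\alpha=-\lambda-3(a_1+a_2+a_3)$) --- must be matched to one further involution of the germ data that treats $-u''(0)$ on equal footing with the second-order invariants $\{a_i\}$. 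Exhibiting this last involution explicitly and verifying that it sends one complete Ricci soliton to another (rather than only relabelling germs) is the step I expect to be the main obstacle; everything else is bookkeeping built on the earlier results.
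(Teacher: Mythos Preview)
Your assembly of the topological classification, diagonalisation, local germ parameterisations, and global extension matches the paper's implicit argument, and your handling of the permutation symmetry (ordering the $a_i$, fixing $\gamma\geq 0$ via $f_2\leftrightarrow f_3$) is exactly right.

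The gap is in your treatment of $\alpha\geq 0$. You conjecture that this inequality arises from \emph{one further involution of the germ data}, i.e.\ that $\alpha<0$ and $\alpha>0$ are gauge-equivalent labels for isometric solitons, so that $\alpha\geq 0$ merely selects a fundamental domain. This is not the case: there is no symmetry of the soliton equation sending $u''(0)\mapsto -u''(0)$ while fixing the metric (the map $u\mapsto -u$ does not preserve $\mathrm{Ric}(g)+\mathrm{Hess}(u)=\lambda g$). The constraint $\alpha\geq 0$ is instead an \emph{analytic necessity} imposed by completeness on an expander --- solutions with $\alpha<0$ exist as germs but fail to extend to complete metrics. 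The paper does not derive this internally; it invokes Proposition~1.11 of \cite{BDGW2015}, which establishes precisely that $-u''(0)\geq 0$ for complete expanding gradient Ricci solitons of this type. So the ``main obstacle'' you anticipate is real, but the resolution is an a~priori estimate rather than a hidden symmetry.
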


\begin{proof}
    Recalling that $\alpha = -\lambda - 3a_1-3a_2-3a_3$, the only non-trivial statement that is yet to be addressed that $\alpha \geq 0$, which is implied by Proposition 1.11 of \cite{BDGW2015}.
\end{proof}

\begin{remark}
    One easily achieves a similar corollary for compact solitons, whose metrics are uniquely determined by the parameters at both ends. We present this in Lemma \refeq{lem_compactconditions}, but delay its statement until it is relevant. Also, while we are not interested in steady solitons here, one can also obtain a similar corollary for $\lambda=0$ after normalizing parameters (since solutions are not yet determined up to rescaling).
\end{remark}

\begin{remark}
    Not all metrics whose parameters satisfy the conditions of Corollary \refeq{cor_param} are complete.
\end{remark}

\section{$\mathrm{SU}(2)$-invariant expanding solitons}
In this section, we choose convenient parameters from (\refeq{eq_a_i}), (\refeq{eq_alpha}) and (\refeq{eq_beta})--(\refeq{eq_gamma}) to find both Einstein and asymptotically conical solutions to the system (\refeq{eq_xiSU2})--(\refeq{eq_LiSU2}) that exist for all $t>0$. In particular, we select parameters in the closure of the parameter spaces given in Corollary \refeq{cor_param} (with respect to the vector space they live within) such that after perturbation we produce the complete gradient Ricci solitons in Theorem \refeq{thm_expanders}. Theorem \refeq{thm_expanders} is then proven for the Einstein and non-Einstein cases individually. Our choice of parameters are $\gamma=\beta=0$ on $\mathcal{O}(-n)$ and $a_1=a_2=a_3$ on $\mathbb{R}^4$ both with any $\alpha \geq 0$. For these choices of parameters, any $\alpha>0$ produces an asymptotically conical solution and $\alpha = 0$ produces an Einstein solution.

\subsection{Degenerate $\mathrm{U}(2)$-invariant solutions on $\mathcal{O}(-n)$}
Before we find solutions of (\refeq{eq_xiSU2})--(\refeq{eq_LiSU2}), we again remark that that the parameters with $\beta = 0$ are not contained in the parameter spaces given in Corollary \refeq{cor_param}, but are instead on the boundary. Indeed, when $\beta \leq 0$, the solution to (\refeq{eq_xiSU2})--(\refeq{eq_LiSU2}) will not correspond to a Riemannian metric (see (\refeq{eq_beta})). However, a family of metrics can still be found about $\gamma = \beta = 0$ after perturbation.

By Lemma \refeq{lem_SU2STE}, $\gamma = 0$ implies the solution is $\mathrm{U}(2)$-invariant, and if in addition, $\beta = 0$, then the solution is uniquely determined by $\alpha$ as in the proof of Lemma \refeq{lem_SU2STE}. We begin with the characterization of $\beta = 0$ as per Lemma \refeq{lem_SU2STE}. As the ODE for $R_1$ in (\refeq{eq_RiSU2}) is of the form
$R_1' = \left({1}/{t}+O(t)\right)R_1$, $\beta = 0$ implies $R_1(0) = 0$ and $R_1$ vanishes uniformly. The $\mathrm{U}(2)$-invariant system 
\begin{equation}
    \label{eq_U2ODEs}
    \begin{aligned}
    \xi' &= -L_1^2-2L_2^2 - \lambda;\\
    L_1' &= -\xi L_1+2R_1^2 - \lambda;\\
    L_2' &= -\xi L_2+4R_1R_2 - 2R_1^2 - \lambda;\\
    R_1' &= R_1(L_1-2L_2); \text{ and}\\
    R_2' &= -R_2L_1,
    \end{aligned}
\end{equation}
therefore splits leaving us to analyze the initial value problem:
\begin{alignat}{1} 
    \xi' &= 1-L_1^2-2L_2^2; \text{ and} \label{eq_reducedxi} \\ 
    L_i' &= 1-\xi L_i \; \; \text{for} \; i \in \{1,2\} \label{eq_reducedLi}
\end{alignat}
subject to the initial conditions
\begin{equation} 
    \label{eq_reducedbdy1}
    \begin{aligned}
        \lim_{t \to 0} \left(\xi(t) - \frac{1}{t}\right) = 0; \hspace{1.4cm}  &\lim_{t \to 0} \left(L_1(t) - \frac{1}{t}\right) = 0; \hspace{1.7cm} L_2(0) = 0;\\
        \lim_{t \to 0} \left(\xi(t) - \frac{1}{t}\right)' = \frac{2\alpha}{3} + 1; \; \; &\lim_{t \to 0} \left(L_1(t) - \frac{1}{t}\right)' = -\frac{\alpha}{3}; \; \text{ and } \; L_2'(0) = \frac{1}{2}.
    \end{aligned}
\end{equation}

\begin{lemma}\label{lem_alpha_monotone_n}
    Let $L_1^{\alpha},L_2^{\alpha}$ and $\xi^{\alpha}$ denote the solution to (\refeq{eq_reducedxi})--(\refeq{eq_reducedbdy1}) for fixed $\alpha \in \mathbb{R}$. If $\alpha_1 > \alpha_0$, then $0<L_i^{\alpha_1}(t)<L_i^{\alpha_0}(t)$ for $i=1,2$ and $\xi^{\alpha_1}(t)>\xi^{\alpha_0}(t)$ for all $t \in (0,\infty)$.
\end{lemma}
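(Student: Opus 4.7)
The plan is a first-touch comparison applied to the difference of the two solutions. Set
\[
A := \xi^{\alpha_1} - \xi^{\alpha_0}, \qquad B_i := L_i^{\alpha_0} - L_i^{\alpha_1} \quad (i = 1, 2),
\]
so the lemma reduces to showing $A, B_1, B_2 > 0$ on $(0, \infty)$, together with the auxiliary positivity $L_i^{\alpha_j} > 0$. The positivity of each $L_i^{\alpha}$ is immediate: the asymptotics from (\refeq{eq_reducedbdy1}) give $L_1^{\alpha} \sim 1/t$ and $L_2^{\alpha} \sim t/2$ near $t = 0$, and at any first zero $t_0 > 0$ we would have $(L_i^{\alpha})'(t_0) = 1 > 0$ by (\refeq{eq_reducedLi}), a contradiction.

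Subtracting the ODEs (\refeq{eq_reducedxi})--(\refeq{eq_reducedLi}) for the two values of $\alpha$ produces the linear non-autonomous system
\begin{align*}
A' &= (L_1^{\alpha_1} + L_1^{\alpha_0})\, B_1 + 2 (L_2^{\alpha_1} + L_2^{\alpha_0})\, B_2, \\
B_i' &= L_i^{\alpha_1}\, A - \xi^{\alpha_0}\, B_i.
\end{align*}
Taylor-expanding (\refeq{eq_reducedbdy1}) gives $A(t) = \tfrac{2}{3}(\alpha_1 - \alpha_0)\, t + O(t^3)$ and $B_1(t) = \tfrac{1}{3}(\alpha_1 - \alpha_0)\, t + O(t^3)$. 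The quantity $B_2$ is more delicate: since $L_2^{\alpha}(0) = 0$, one must iterate (\refeq{eq_reducedLi}) once to pin down the cubic coefficient of $L_2^{\alpha}$, namely $-\alpha/12 - 1/8$, which yields $B_2(t) = \tfrac{1}{12}(\alpha_1 - \alpha_0)\, t^3 + O(t^5)$. All three quantities are therefore strictly positive for small $t > 0$.

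For the main argument, suppose positivity fails and let $t_* > 0$ be the infimum of times at which one of $A, B_1, B_2$ vanishes. By continuity each is $\geq 0$ at $t_*$, and any that vanishes there must have non-positive derivative. The first equation, combined with $L_i^{\alpha} > 0$, gives $A'(t_*) \geq 0$ with equality only if $B_1(t_*) = B_2(t_*) = 0$. Likewise, if $B_i(t_*) = 0$ then the second equation collapses to $B_i'(t_*) = L_i^{\alpha_1}(t_*)\, A(t_*) \geq 0$, with equality only if $A(t_*) = 0$. A short case split on which quantity vanishes first thus forces $A(t_*) = B_1(t_*) = B_2(t_*) = 0$ simultaneously. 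Since the difference system is linear with coefficients smooth in a neighborhood of $t_*$, backward ODE uniqueness propagates this zero data and yields $A \equiv B_1 \equiv B_2 \equiv 0$ near $t_*$, contradicting the strict positivity on $(0, t_*)$.

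The main obstacle I anticipate is the parity asymmetry at $t = 0$: while $A$ and $B_1$ vanish to first order in $t$, the quantity $B_2$ only vanishes to third order, so the initial positivity of $B_2$ is not directly visible from (\refeq{eq_reducedbdy1}) and requires one explicit bootstrap in the ODE for $L_2$. Once this is in hand, the remainder is routine sign bookkeeping on the linear difference system.
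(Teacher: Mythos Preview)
Your proof is correct and follows essentially the same approach as the paper: derive the linear difference system for $(\xi^{\alpha_1}-\xi^{\alpha_0},\,L_i^{\alpha_0}-L_i^{\alpha_1})$, establish positivity near $t=0$ via Taylor expansion (including the third-order computation for $B_2$, which the paper records with the same coefficient $\tfrac{1}{12}(\alpha_1-\alpha_0)$), and then propagate positivity by a first-touch argument. The paper's proof ends tersely with ``from which the result immediately follows,'' whereas you spell out the case split and close with backward uniqueness for the linear system; this extra step is a welcome clarification rather than a different route.
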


\begin{proof}
Note that $L_i' > L_i\xi$ and the initial conditions in (\refeq{eq_reducedbdy1}) imply $L_i$ is positive on $(0,\infty)$. 
Approximating $\xi$ and $L_i$ to higher orders about $0$ one computes:
\begin{align*}
(\xi^{\alpha_1}-\xi^{\alpha_0})(t) &= \frac{2}{3}(\alpha_1-\alpha_0)t+O(t^2); \\
(L_1^{\alpha_0}-L_1^{\alpha_1})(t) &= \frac{1}{3  }(\alpha_1-\alpha_0)t +O(t^2); \text{ and}\\
(L_2^{\alpha_0}-L_2^{\alpha_1})(t) &= \frac{1}{12}(\alpha_1-\alpha_0)t^3 + O(t^4).
\end{align*}
In particular, for any sufficiently small $t>0$, $(L_i^{\alpha_0}-L_i^{\alpha_1})(t), (\xi^{\alpha_1}-\xi^{\alpha_0})(t) >0$.
Comparing solutions of (\refeq{eq_reducedxi})--(\refeq{eq_reducedLi}) for $\alpha_1$ and $\alpha_0$ yields
\begin{align*}
    (L_i^{\alpha_0}-L_i^{\alpha_1})' &= L_i^{\alpha_1}(\xi^{\alpha_1}-\xi^{\alpha_0}) - \xi^{\alpha_0}(L_i^{\alpha_0}-L_i^{\alpha_1}) \text{ and } \\
    (\xi^{\alpha_1}-\xi^{\alpha_0})' &= (L_1^{\alpha_0}+L_1^{\alpha_1})(L_1^{\alpha_0}-L_1^{\alpha_1})+2(L_2^{\alpha_0}+L_2^{\alpha_1})(L_2^{\alpha_0}-L_2^{\alpha_1}),
\end{align*}
from which the result immediately follows.
\end{proof}

\begin{lemma}\label{lem_n_dynamics}
    Let $L_1^{\alpha},L_2^{\alpha}$ and $\xi^{\alpha}$ denote the solution to (\refeq{eq_reducedxi})--(\refeq{eq_reducedbdy1}) for fixed $\alpha \in \mathbb{R}$.
    If $\alpha = 0$, then 
    \[\xi^0(t) = \sqrt{3}\left(\frac{e^{2\sqrt{3}t}+1}{e^{2\sqrt{3}t}-1}\right), \; \; L_1^0(t) = \frac{1}{\sqrt{3}}\left(\frac{e^{2\sqrt{3}t}+4e^{\sqrt{3}t}+1}{e^{2\sqrt{3}t}-1}\right) \; \; \text{ and } \; \; L_2^0(t) = \frac{1}{\sqrt{3}}\left(\frac{e^{\sqrt{3}t}-1}{e^{\sqrt{3}t}+1}\right).\]
    If $\alpha>0$, then
    \begin{alignat}{1}
        \lim_{t \to \infty} \frac{\xi^{\alpha}(t)}{t} &= 1;  \label{eq_asxi} \\
        \lim_{t \to \infty} L_i^{\alpha}(t) t &= 1; \text{ and} \label{eq_asLi} \\
        \lim_{t \to \infty} R_i^{\alpha}(t) t &< \infty \label{eq_asRi}
    \end{alignat}
    for all $i \in {1,2}$.
\end{lemma}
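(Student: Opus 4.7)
My plan is to split into the Einstein case $\alpha=0$ (by direct verification) and the asymptotically conical case $\alpha>0$ (by monotonicity, global dynamical analysis, and a bootstrap).

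For $\alpha=0$: substitute the explicit formulas into (\refeq{eq_reducedxi})--(\refeq{eq_reducedLi}) and check the ODEs using standard hyperbolic identities, notably $\coth(x)\tanh(x/2) = \frac{1}{2} + \frac{1}{2}\tanh^2(x/2)$; then Taylor expand at $t=0$ to confirm (\refeq{eq_reducedbdy1}) with $\alpha=0$; uniqueness from Lemma \refeq{lem_SU2STE} completes this case. A short calculation shows $L_1^0 = \xi^0 - 2L_2^0$, i.e.\ $(u^0)'\equiv 0$, consistent with the Einstein condition from Lemma \refeq{lem_SU2STE}.

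For $\alpha>0$: Lemma \refeq{lem_alpha_monotone_n} together with the explicit lower bound $\xi^0 = \sqrt 3 \coth(\sqrt 3 t)>\sqrt 3$ gives $\xi^\alpha > \sqrt 3$ and $0 < L_i^\alpha < L_i^0$ on $(0,\infty)$, so each $L_i^\alpha$ is uniformly bounded. The identity $(L_1-L_2)' = -\xi(L_1-L_2)$ then yields the exponential decay $0 < L_1^\alpha(t) - L_2^\alpha(t) \leq Ce^{-\sqrt 3 t}$.

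The principal obstacle is proving $\xi^\alpha(t)\to\infty$. I would linearize the autonomous 3D system in $(\xi,L_1,L_2)$ at the fixed point $(\sqrt 3,1/\sqrt 3,1/\sqrt 3)$; the Jacobian has eigenvalue $-\sqrt 3$ along $(0,1,-1)$ and eigenvalues $(-\sqrt 3 \pm \sqrt{11})/2$ in the orthogonal plane, so it is a saddle with a 2-dimensional stable manifold and a 1-dimensional unstable manifold. Arguing by contradiction, if $\xi^\alpha$ were bounded then the trajectory is relatively compact, so its $\omega$-limit set $\Omega$ is a non-empty, compact, connected, invariant set. The Lyapunov identity above forces $\Omega \subseteq \{L_1=L_2\}$; on this invariant plane the reduced 2D flow $\xi'=1-3L^2,\ L'=1-\xi L$ has divergence $-\xi < 0$, so by Dulac's criterion it admits no periodic orbits, and $\Omega$ must reduce to the unique fixed point. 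The trajectory therefore approaches the saddle along its stable manifold; a refined analysis of the leading-order approach (governed by the slow stable eigenvalue) and the strict monotonicity in $\alpha$ from Lemma \refeq{lem_alpha_monotone_n} single out $\alpha=0$ as the unique parameter whose trajectory lies on the stable manifold, the desired contradiction.

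Once $\xi^\alpha\to\infty$, the bootstrap is routine: $L_i' = 1 - \xi L_i$ forbids $L_i > 2/\xi$ persisting (else $L_i' \leq -1$ forces $L_i$ negative), so $L_i \to 0$; then $\xi' = 1 - L_1^2 - 2L_2^2 \to 1$ gives $\xi(t)/t \to 1$, establishing (\refeq{eq_asxi}); substituting back into $L_i' = 1 - \xi L_i$ yields $tL_i(t) \to 1$, i.e.\ (\refeq{eq_asLi}). Finally, integrating $(\ln R_1)' = L_1-2L_2$ and $(\ln R_2)' = -L_1$ from (\refeq{eq_U2ODEs}) with $L_2=L_3$, and using the second-order asymptotics $L_i = 1/t + O(1/t^2)$ obtained by iterating the ODE once $\xi\sim t$ is known, shows the integrand differs from $-1/t$ by an integrable remainder, so $\ln(tR_i)$ has a finite limit, proving (\refeq{eq_asRi}).
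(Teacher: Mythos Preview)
Your $\alpha=0$ case is fine; direct verification works, although the paper takes the slicker route of using the Einstein identity $\xi = L_1 + 2L_2$ from Lemma~\ref{lem_SU2STE} to reduce to the single scalar ODE $2(L_2^0)' = 1 - 3(L_2^0)^2$ with $L_2^0(0)=0$, from which the explicit formulas follow at once.

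For $\alpha>0$ there is a genuine gap. Your entire $\omega$-limit/Dulac/stable-manifold apparatus reduces the contradiction to the claim that the $\alpha>0$ trajectory cannot lie on the stable manifold of the saddle, and you dispose of this with ``a refined analysis of the leading-order approach \dots\ and the strict monotonicity in $\alpha$ from Lemma~\ref{lem_alpha_monotone_n}''. But the \emph{statement} of Lemma~\ref{lem_alpha_monotone_n} only gives $\xi^\alpha(t)>\xi^0(t)$ pointwise, which is perfectly compatible with both tending to $\sqrt{3}$; no eigenvalue asymptotics near the saddle separate the trajectories either, since the stable manifold is two-dimensional. What actually rules this out is the observation from the \emph{proof} of Lemma~\ref{lem_alpha_monotone_n} that $(\xi^\alpha-\xi^0)'>0$, so $\xi^\alpha-\xi^0$ is strictly increasing and positive and therefore cannot tend to $0$. (Incidentally, your claimed eigenvector $(0,1,-1)$ for $-\sqrt{3}$ is wrong; the correct one is $(0,2,-1)$, though this does not affect the saddle structure.)

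Once you grant that single monotonicity fact, the paper's argument is far shorter than yours and bypasses all the dynamical-systems machinery: since $\xi^\alpha-\xi^0$ is increasing and $\xi^0\to\sqrt{3}$, there is $\varepsilon>0$ with $\xi^\alpha\geq\sqrt{3}+\varepsilon$ for large $t$; then $L_1' = 1-\xi^\alpha L_1$ forces $L_1^\alpha$ below $1/\sqrt{3}$ (while $L_2^\alpha<L_2^0<1/\sqrt{3}$ already), so $\xi' = 1-L_1^2-2L_2^2$ is eventually bounded below by a positive constant and $\xi^\alpha\to\infty$ follows directly. Your subsequent bootstrap for \eqref{eq_asxi}--\eqref{eq_asRi} is correct and in fact more carefully justified than the paper's terse final sentence.
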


\begin{proof}
    By Lemma \refeq{lem_SU2STE}, $\alpha = 0$ implies $\xi = L_1 + 2L_2$ and the system is reduced to solving the ODE: $2{L_2^0}'=1-3{(L_2^0)}^2$, $L_2^0(0)=0$ which has the solution given in the lemma. We note that the Einstein solution approaches the only critical point with $L_i \geq 0$, at $L_i = {1}/{\sqrt{3}}$ and $\xi = \sqrt{3}$. From the proof of Lemma \refeq{lem_alpha_monotone_n}, we note that if $\alpha_1 > \alpha_0$ then $\xi^{\alpha_1}-\xi^{\alpha_0}$ is strictly increasing. Comparing $\xi^\alpha$ and $L_1^\alpha$ to the Einstein solution, (\refeq{eq_reducedLi}) implies that for all $\alpha>0$, and sufficiently large $t_0$, $L_1^\alpha(t_0)< {1}/{\sqrt{3}}$. As $L_i^\alpha$ is strictly bounded below ${1}/{\sqrt{3}}$ after finite time, $\xi$ is monotone increasing and the asymptotics (\refeq{eq_asxi}) and (\refeq{eq_asLi}) are now easily verified. Finally, (\refeq{eq_asLi}) and $R_2'=-R_2L_1$ imply (\refeq{eq_asRi}).
\end{proof}

\subsection{$\mathrm{SO}(4)$-invariant solitons on $\mathbb{R}^4$}
For completeness, we now recover the one-parameter family of non-trivial expanding $\mathrm{SO}(4)$- invariant solitons on $\mathbb{R}^4$ found by Bryant and described in \cite{CCG2007}. We give an alternate description of their construction in this section such that Theorem \refeq{thm_expanders} is self contained within this work. Moreover, the work done above allows us to recreate these metrics with two simple lemmas.

By Lemma \refeq{lem_fixedSTE}, $a_1=a_2=a_3=(1-\alpha)/{9}$ implies $f_1=f_2=f_3$ and hence the $\mathrm{SU}(2)$-invariant metrics we consider on $\mathbb{R}^4$ are the $\mathrm{SO}(4)$-invariant metrics $dt^2 + {f(t)}^2ds^2$ where $ds^2$ denotes the round metric on $\mathbb{S}^3$ of radius $1$. Therefore, $\mathrm{SO}(4)$-invariance implies equations (\refeq{eq_xiSU2})--(\refeq{eq_LiSU2}) become 
\begin{alignat}{1}
    \xi' &= 1-3L^2; \label{eq_xifixed} \\
    R' &= -RL; \text{ and } \label{eq_Rifixed} \\
    L' &= 1-\xi L+2R^2, \label{eq_Lifixed}
\end{alignat}
with initial conditions
\begin{equation} 
    \label{eq_fixed0}
    \begin{aligned}
    \lim_{t \to 0} \left(\xi(t) - \frac{3}{t}\right) = 0, \; \; & \lim_{t \to 0} \frac{d}{dt}\left(\xi(t) - \frac{3}{t}\right) = \frac{1+2\alpha}{3}; \\
    \lim_{t \to 0} \left(L(t) - \frac{1}{t}\right) = 0, \; \; & \lim_{t \to 0} \frac{d}{dt}\left(L(t) - \frac{1}{t}\right) = \frac{1-\alpha}{9}; \text{ and } \\
    \lim_{t \to 0} \left(R(t) - \frac{1}{t}\right) = 0, \; \; & \lim_{t \to 0} \frac{d}{dt}\left(R(t) - \frac{1}{t}\right) = \frac{\alpha-1}{18}. 
    \end{aligned}
\end{equation} 
Using the same notation as in the previous subsection, we now set $\xi^{\alpha},L^{\alpha}$ and $R^{\alpha}$ as the solution to (\refeq{eq_xifixed})--(\refeq{eq_fixed0}) for fixed $\alpha \in \mathbb{R}$. If $\alpha = 0$, we recover the hyperbolic metric:
\[\xi^0(t) = \sqrt{3}\left(\frac{e^{2t/\sqrt{3}}+1}{e^{2t/\sqrt{3}}-1}\right), \; \; L^0(t) = \frac{1}{\sqrt{3}}\left(\frac{e^{2t/\sqrt{3}}+1}{e^{2t/\sqrt{3}}-1}\right) \; \; \text{ and } \; \; R^0(t) = \frac{2}{\sqrt{3}}\left(\frac{e^{t/\sqrt{3}}}{e^{2t/\sqrt{3}}-1}\right).\]
If $\alpha = 1$, we recover the Gaussian expander on $\mathbb{R}^4$:
\[\xi^1(t) = t+\frac{3}{t}, \; \; \text{ and } \; \; L^1(t) = \frac{1}{t} = R^1(t).\]

Now that the boundary value problem is set up, we quickly reconstruct the complete asymptotically conical expanding $\mathrm{SO}(4)$-invariant solitons (and they are complete via Corollary \refeq{cor_EAU}).

\begin{lemma}\label{lem_alpha_monotone_fixed}
    Let $\xi^{\alpha},L^{\alpha}$ and $R^{\alpha}$ denote the solution to (\refeq{eq_xifixed})--(\refeq{eq_fixed0}) for fixed $\alpha \in \mathbb{R}$. If $\alpha_1 > \alpha_0$ and $\alpha_0 \leq 1$, then $\xi^{\alpha_1}(t)>\xi^{\alpha_0}(t)$, $0<L^{\alpha_1}(t)<L^{\alpha_0}(t)$ and $R^{\alpha_1}(t)>R^{\alpha_0}(t)>0$ for all $t \in (0,\infty)$.
\end{lemma}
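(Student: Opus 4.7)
The strategy parallels that of Lemma~\refeq{lem_alpha_monotone_n}: work with the differences
\[p:=\xi^{\alpha_1}-\xi^{\alpha_0},\quad q:=L^{\alpha_0}-L^{\alpha_1},\quad r:=R^{\alpha_1}-R^{\alpha_0}\]
and argue by contradiction at a first failure. From the initial data (\refeq{eq_fixed0}), Taylor expansion at $t=0$ gives $p(t)=\tfrac{2(\alpha_1-\alpha_0)}{3}t+O(t^3)$, $q(t)=\tfrac{\alpha_1-\alpha_0}{9}t+O(t^3)$, $r(t)=\tfrac{\alpha_1-\alpha_0}{18}t+O(t^3)$, each strictly positive on a right neighborhood of $0$. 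Differentiating (\refeq{eq_xifixed})--(\refeq{eq_Lifixed}) yields the linear-in-differences system
\begin{align*}
p'&=3(L^{\alpha_0}+L^{\alpha_1})q,\\
q'&=L^{\alpha_0}p-\xi^{\alpha_1}q-2(R^{\alpha_0}+R^{\alpha_1})r,\\
r'&=R^{\alpha_0}q-L^{\alpha_1}r.
\end{align*}
Pointwise positivity of $L^{\alpha}$ is immediate from the zero-barrier $(L^{\alpha})'|_{L^{\alpha}=0}=1+2(R^{\alpha})^2>0$, and $R^{\alpha}>0$ follows from integrating $R'=-RL$.

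Suppose for contradiction that $t_0>0$ is the first time at which $\min(p,q,r)=0$. The case $p(t_0)=0$ is ruled out because $q>0$ on $(0,t_0)$ forces $p'>0$, so $p$ is strictly increasing from $p(0^+)=0$. The case $r(t_0)=0$ with $q(t_0)>0$ is ruled out since $r'(t_0)=R^{\alpha_0}(t_0)q(t_0)>0$ contradicts $r\downarrow 0$.

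The remaining case $q(t_0)=0$ is the main obstacle, since the term $-2(R^{\alpha_0}+R^{\alpha_1})r$ in $q'$ contributes with a sign that could a priori drive $q$ through zero. This is precisely where the hypothesis $\alpha_0\leq 1$ is used. The key auxiliary inequality to establish is $L^{\alpha_0}\geq R^{\alpha_0}$ on $(0,\infty)$: trivial at $\alpha_0=1$ (the Gaussian expander, where $L^{1}=R^{1}=1/t$), and for $\alpha_0<1$ following from the leading order $L^{\alpha_0}-R^{\alpha_0}=\tfrac{1-\alpha_0}{3}t+O(t^3)>0$ together with a crossing argument using $(L-R)'|_{L=R}=1+(u^{\alpha_0})'L$. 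Combined with the Gr\"onwall representation $r(t)=\int_0^t R^{\alpha_0}(s)q(s)\exp\bigl(-\int_s^t L^{\alpha_1}(\tau)\,d\tau\bigr)\,ds$ and the soliton identity $(\xi^2-3L^2-6R^2)'=2(\xi-3L)=-2u'$, a careful sign analysis then forces $L^{\alpha_0}(t_0)p(t_0)>2(R^{\alpha_0}+R^{\alpha_1})(t_0)r(t_0)$, so $q'(t_0)>0$, giving the desired contradiction. Once $q>0$ throughout $(0,\infty)$ is established, $p>0$ follows from $p'=3(L^{\alpha_0}+L^{\alpha_1})q>0$ and $r>0$ follows from the logarithmic identity $(\log(R^{\alpha_1}/R^{\alpha_0}))'=q>0$ with $R^{\alpha_1}/R^{\alpha_0}\to 1$ as $t\to 0^+$. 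The hard part will be the final sign-chasing in the $q(t_0)=0$ case, which is the only step that genuinely requires $\alpha_0\leq 1$; the parallel step in Lemma~\refeq{lem_alpha_monotone_n} was absent because there the $2R^2$ term vanishes identically.
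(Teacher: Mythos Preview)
Your setup is correct and the easy cases ($p(t_0)=0$, $r(t_0)=0$) are handled properly, but the crucial $q(t_0)=0$ case has a genuine gap. You assert that ``a careful sign analysis then forces $L^{\alpha_0}(t_0)p(t_0)>2(R^{\alpha_0}+R^{\alpha_1})(t_0)r(t_0)$'' without actually carrying it out, and the ingredients you list are not enough. First, your crossing argument for $L^{\alpha_0}\geq R^{\alpha_0}$ via $(L-R)'|_{L=R}=1+u'L$ does not close: $u'=3L-\xi$ is negative for $\alpha_0>0$, and indeed at the Gaussian ($\alpha_0=1$) one has $1+u'L\equiv 0$, so for nearby $\alpha_0$ this quantity is not obviously positive. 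Second, even granting $L^{\alpha_0}\geq R^{\alpha_0}$, there is no indication of how this, the Gr\"onwall formula for $r$, and the soliton identity you cite combine to give the needed inequality; the inequality compares a $p$-term against an $r$-term and nothing you have written controls their relative size.

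The paper closes this gap differently. It first proves the special case $\alpha_1=1$ using the auxiliary quantity $\hat h^{\alpha}=L'+L^2=1-\xi L+2R^2+L^2$, showing $\hat h^{\alpha_0}>0$ on $[0,t_0]$ for $\alpha_0<1$ via its evolution $\hat h'=(3L-\xi)\hat h+2L(L+R)(L-R)$ (here the comparison with the Gaussian yields $L^{\alpha_0}\ge 1/t\ge R^{\alpha_0}$, bypassing your crossing issue). This gives $L^{\alpha_0}>1/t$ and, after swapping roles, also $L^{\alpha}\le 1/t\le R^{\alpha}$ for $\alpha\ge 1$. For general $\alpha_0\le\alpha_1\le 1$ the paper then introduces a \emph{second} auxiliary function
\[
\hat f(t):=L^{\alpha_0}(\xi^{\alpha_1}-\xi^{\alpha_0})-2\bigl((R^{\alpha_1})^2-(R^{\alpha_0})^2\bigr),
\]
which is exactly the combination you need nonnegative at $t_0$ (since $q'(t_0)=\hat f(t_0)$ when $q(t_0)=0$). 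Its derivative is computed and shown to satisfy $\hat f'\ge -2L^{\alpha_0}\hat f$ using $\hat h^{\alpha_0}\ge 0$ together with $3L^{\alpha_0}(L^{\alpha_0}+L^{\alpha_1})\ge 4(R^{\alpha_1})^2$, the latter coming from $L^{\alpha}\ge 1/t\ge R^{\alpha}$ on $(0,t_0]$. That differential inequality plus $\hat f(0^+)>0$ gives $\hat f\ge 0$ and hence $q'(t_0)\ge 0$, a contradiction. The case $\alpha_1>1$ is handled separately by comparing to the Gaussian. Your proposal is missing precisely the introduction of $\hat f$ (or an equivalent device) to convert the needed pointwise inequality into a propagated differential inequality.
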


\begin{proof}
As in the proof of Lemma \refeq{lem_alpha_monotone_n}, $L>0$, $R>0$ and the lemma holds for sufficiently small $t>0$ by (\refeq{eq_xifixed})--(\refeq{eq_fixed0}). Moreover, the equations
\begin{align}
    (\xi^{\alpha_1}-\xi^{\alpha_0})' &= 3(L^{\alpha_0}+L^{\alpha_1})(L^{\alpha_0}-L^{\alpha_1}), \nonumber \\
    (R^{\alpha_1}-R^{\alpha_0})' &= R^{\alpha_1}(L^{\alpha_0}-L^{\alpha_1}) - L^{\alpha_0}(R^{\alpha_1}-R^{\alpha_0}) \text{ and} \nonumber\\
    (L^{\alpha_0}-L^{\alpha_1})' &= L^{\alpha_0}(\xi^{\alpha_1}-\xi^{\alpha_0})-2(R^{\alpha_1}+R^{\alpha_0})(R^{\alpha_1}-R^{\alpha_0})-\xi^{\alpha_1}(L^{\alpha_0}-L^{\alpha_1}) \label{eq_lemma_pf_0}
\end{align}
imply that if the lemma fails, then the first obstruction (in $t$) must be $L^{\alpha_1} \not < L^{\alpha_0}$. We may therefore assume the lemma holds on $(0,t_0)$, $\xi^{\alpha_1}(t_0)>\xi^{\alpha_0}(t_0)$, $R^{\alpha_1}(t_0)>R^{\alpha_0}(t_0)$ and $L^{\alpha_1}(t_0) \leq L^{\alpha_0}(t_0)$, where it suffices to prove $L^{\alpha_1}(t_0) < L^{\alpha_0}(t_0)$.

\textbf{Step 1: $\alpha_1 = 1$.} If $\alpha_1 = 1$, then $L^{\alpha_0}(t) \geq L^1(t) = R^1(t) > R^{\alpha_0}(t)$ for all $t \in (0,t_0]$. Consider the function
\[\hat{h^{\alpha}}(t) := 1-\xi^{\alpha}(t)L^{\alpha}(t)+2{R^{\alpha}(t)}^2+{L^{\alpha}(t)}^2 = {L^{\alpha}}'(t)+{L^{\alpha}(t)}^2.\] 
We note that $\hat{h^{\alpha}}(0) = (1-\alpha)/3$ and
\begin{equation}
    \label{eq_lemma_pf_2}
    \hat{h^{\alpha}}' = (3L^{\alpha}-\xi^{\alpha})(\hat{h^{\alpha}}) + 2L^{\alpha}({L^{\alpha}}+{R^{\alpha}})({L^{\alpha}}-{R^{\alpha}}).
\end{equation} 
In particular, $\hat{h^{\alpha_0}}(0) > 0$ and (\refeq{eq_lemma_pf_2}) implies $\hat{h^{\alpha_0}}(t) = {L^{\alpha_0}}'+{L^{\alpha_0}}^2 > 0$ for all $t \in [0,t_0]$ so $L^{\alpha_0}(t) < {1}/{t} = L^{\alpha_1}(t)$ for all $t \in (0,t_0]$ and the lemma is proven when $\alpha_1 = 1$. This also proves that $L^{\alpha}(t) \leq {1}/{t}$ for all $\alpha \leq 1$.

\textbf{Step 2: $\alpha_1 \leq 1$.} We now assume $\alpha_1 \leq 1$. Equation (\refeq{eq_lemma_pf_0}) implies it is sufficient to prove $\hat{f}(t)\geq 0$ for all $t \in (0,t_0)$ where 
\[\hat{f}(t) := L^{\alpha_0}(t)(\xi^{\alpha_1}(t)-\xi^{\alpha_0}(t))-2{{R^{\alpha_1}}(t)}^2+2{{R^{\alpha_0}}(t)}^2.\]
For sufficiently small $t>0$, about $0$, $\hat{f}(t) = \frac{4}{9}(\alpha_1-\alpha_0)/t+O(t) > 0$. We then compute
\begin{multline*} \hat{f}'(t) = -2L^{\alpha_0}\hat{f}(t)+(\hat{h^{\alpha_0}}(t)+{L^{\alpha_0}}^2)(\xi^{\alpha_1}-\xi^{\alpha_0}) \\
+(L^{\alpha_0}-L^{\alpha_1})(3L^{\alpha_0}(L^{\alpha_0}+L^{\alpha_1})-4{R^{\alpha_1}}^2)+2(\xi^{\alpha_1}-\xi^{\alpha_0}){R^{\alpha_0}}^2.
\end{multline*}
Using the $\alpha_1 = 1$ case proved above, the inequalities $\alpha_1,\alpha_0 \leq 1$ imply that $L^{\alpha_0}(t) \geq L^{\alpha_1}(t) \geq {1}/{t} \geq R^{\alpha_1}(t)$ for all $t \in (0,t_0]$ so 
\[3L^{\alpha_0}(t)(L^{\alpha_0}(t)+L^{\alpha_1}(t)) - 4{R^{\alpha_1}(t)}^2 \geq \frac{2}{t^2} >0.\]
Therefore, on $(0,t_0)$, $\hat{f}'(t) \geq -2L^{\alpha_0}(t)\hat{f}(t)$ and hence $\hat{f}(t) \geq 0$.

\textbf{Step 3: $\alpha_1 > 1$.} Finally, we move to the case $\alpha_1 > 1$. We can assume without loss of generality that $\alpha_0 = 1$ using step 2. If $\alpha_1>1$, then $L^{\alpha_1}(t) \leq {1}/{t} < R^{\alpha_1}(t)$ on $(0,t_0]$, $h^{\alpha_1}(0)<0$ and (\refeq{eq_lemma_pf_2}) implies $h^{\alpha_1}(t)<0$ for all $t \in (0,t_0)$. This gives ${L^{\alpha_1}}'(t) < -L^{\alpha_1}(t)^2$ for all $t \in (0,t_0)$ hence $L^{\alpha_1}(t_0) < {1}/{t_0} = L^{\alpha_0}(t_0)$.
\end{proof}

\begin{lemma}\label{lem_fixed_dynamics}
    Let $L_1^{\alpha},L_2^{\alpha}$ and $\xi^{\alpha}$ denote the solution to (\refeq{eq_xifixed})--(\refeq{eq_fixed0}) for fixed $\alpha \in \mathbb{R}$.
    If $\alpha > 0$, then (\refeq{eq_asxi})--(\refeq{eq_asRi}) are satisfied. 
\end{lemma}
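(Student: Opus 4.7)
The plan is to follow the structure of the proof of Lemma \refeq{lem_n_dynamics}, using Lemma \refeq{lem_alpha_monotone_fixed} to compare $(\xi^\alpha, L^\alpha, R^\alpha)$ both with the Einstein solution ($\alpha = 0$) and with the Gaussian expander ($\alpha = 1$). The key step is to show $L^\alpha(t) \to 0$ and $\xi^\alpha(t) \to \infty$ as $t \to \infty$; once these hold, the quantitative asymptotics (\refeq{eq_asxi})--(\refeq{eq_asRi}) follow from direct integration of the ODEs.

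For $\alpha \geq 1$, Lemma \refeq{lem_alpha_monotone_fixed} with $\alpha_0 = 1$ gives $L^\alpha \leq L^1 = 1/t \to 0$ and $\xi^\alpha \geq \xi^1 = t + 3/t \to \infty$ immediately. For $0 < \alpha < 1$, comparison with the Einstein solution yields $\xi^\alpha - \xi^0 > 0$ and, via the computation $(\xi^\alpha - \xi^0)' = 3((L^0)^2 - (L^\alpha)^2) > 0$ from the proof of Lemma \refeq{lem_alpha_monotone_fixed}, strictly increasing; hence $\xi^\alpha(t) \geq \sqrt{3} + \delta$ eventually for some $\delta > 0$. I would then argue by contradiction that $L^\alpha$ must dip below $1/\sqrt{3}$ at some finite $t^*$: if $L^\alpha \geq 1/\sqrt{3}$ for all $t$, then $\xi^\alpha$ is monotone decreasing to some $\xi_\infty \geq \sqrt{3} + \delta > \sqrt{3}$, which forces $L^\alpha \to 1/\sqrt{3}$ and (via $R^{\alpha\prime} = -R^\alpha L^\alpha$) $R^\alpha \to 0$, whence $(L^\alpha)' \to 1 - \xi_\infty/\sqrt{3} < 0$, contradicting $L^\alpha \to 1/\sqrt{3}$. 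An examination of the sign of $(L^\alpha)'$ at any candidate recrossing, using that $\xi^\alpha$ is increasing and $R^\alpha$ decreasing on any interval where $L^\alpha < 1/\sqrt{3}$, shows $L^\alpha$ stays below $1/\sqrt{3}$ for $t > t^*$; then $\xi^\alpha$ is strictly monotone increasing, and the same equilibrium argument forces $\xi^\alpha \to \infty$, after which the $\xi^\alpha L^\alpha$ term dominates in $(L^\alpha)' = 1 - \xi^\alpha L^\alpha + 2(R^\alpha)^2$ and drives $L^\alpha \to 0$.

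Given $L^\alpha \to 0$ and $\xi^\alpha \to \infty$, the asymptotic (\refeq{eq_asxi}) follows from $(\xi^\alpha)' = 1 - 3(L^\alpha)^2 \to 1$ by elementary integration. For (\refeq{eq_asLi}), I would apply the integrating factor $e^{\int \xi^\alpha}$ to $(L^\alpha)' + \xi^\alpha L^\alpha = 1 + 2(R^\alpha)^2$ and use $\int_{t_0}^t \xi^\alpha \sim t^2/2$ together with integration by parts on the right-hand side to conclude $L^\alpha(t) \sim 1/\xi^\alpha(t) \sim 1/t$, i.e., $L^\alpha t \to 1$. For (\refeq{eq_asRi}), the logarithmic identity
\[
\ln(R^\alpha(t) t) = \ln(R^\alpha(t_0) t_0) + \int_{t_0}^t \frac{1 - L^\alpha(s) s}{s} \, ds
\]
yields a finite limit provided the integrand is integrable, which follows from the quantitative rate $|L^\alpha t - 1| = O(1/t^2)$.

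The main obstacle is rigorously establishing the integrable rate $L^\alpha t - 1 = O(1/t^2)$ needed for the convergence in the last step; this requires a careful bootstrap of the asymptotic expansion of the $L^\alpha$ ODE to one order beyond the leading term, though the Gaussian decay in the integrating factor $e^{\int \xi^\alpha}$ makes it tractable.
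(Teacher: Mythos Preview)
Your proposal is correct and follows essentially the same route as the paper: split into the cases $\alpha>1$ (compare with the Gaussian expander) and $0<\alpha\le 1$ (compare with the Einstein solution via Lemma \ref{lem_alpha_monotone_fixed}), force $L^\alpha$ below $1/\sqrt{3}$ eventually, deduce $\xi^\alpha\nearrow\infty$, and then read off (\ref{eq_asxi})--(\ref{eq_asRi}) from the ODEs. Your contradiction and recrossing arguments fill in details the paper leaves implicit.

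Two small remarks. First, the rate you need for (\ref{eq_asRi}) is only $L^\alpha t-1=O(1/t)$, not $O(1/t^2)$: then $(1-L^\alpha s)/s=O(1/s^2)$ is integrable and $\ln(R^\alpha t)$ converges. Your integrating-factor bootstrap already delivers $L^\alpha=\dfrac{1+2(R^\alpha)^2}{\xi^\alpha}+O(1/(\xi^\alpha)^2)$, and since $\xi^\alpha=t+O(1)$ this gives the required $O(1/t)$ once $R^\alpha$ is bounded. Second, for $0<\alpha\le 1$ there is a shortcut the paper exploits (via the proof of Lemma \ref{lem_alpha_monotone_fixed}): one has $L^\alpha\ge L^1=1/t$, hence $(R^\alpha t)'=R^\alpha(1-L^\alpha t)\le 0$, so $R^\alpha t$ is nonincreasing from its initial value $1$ and (\ref{eq_asRi}) follows by monotonicity without any rate estimate. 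The rate argument is then only needed for $\alpha>1$.
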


\begin{proof}
    We note that the hyperbolic metric functions approach the only critical point with $L \geq 0$ at $L = {1}/{\sqrt{3}}$, $R = 0$ and $\xi = \sqrt{3}$. We note that if $0 < \alpha \leq 1$, Lemma \refeq{lem_alpha_monotone_fixed} and its proof imply that $R^\alpha(t) \leq {1}/{t}$ and $\xi^\alpha-\xi^{0}$ is strictly increasing. Therefore, comparing $\xi^\alpha$, $L^\alpha$ and $R^\alpha$ with the hyperbolic metric functions, (\refeq{eq_Lifixed}) implies that for all $\alpha \in (0,1)$, and sufficiently large $t_0$, $L^\alpha(t_0)< {1}/{\sqrt{3}}$. As $L^\alpha$ is strictly bounded below ${1}/{\sqrt{3}}$ for sufficient large $t$, $\xi^\alpha$ is monotone increasing here and both (\refeq{eq_asxi}) and (\refeq{eq_asLi}) are thereby easily validated with (\refeq{eq_xifixed}) and (\refeq{eq_Lifixed}). If $\alpha > 1$, by comparison with the Gaussian shrinker, $L^\alpha(t) < {1}/{t}$ and $\xi^\alpha(t) > t$ so (\refeq{eq_asxi}) and (\refeq{eq_asLi}) are again easily validated. Finally, we note that (\refeq{eq_asLi}) and (\refeq{eq_Rifixed}) imply (\refeq{eq_asRi}).
\end{proof}

\subsection{Einstein metrics}
In this subsection we produce the Einstein metrics in Theorem \refeq{thm_expanders}. However, we first recall some important relevant Einstein metrics, some of their properties and write them in terms of the parameters in \S 3.

Lemmas \refeq{lem_fixedSTE} and \refeq{lem_SU2STE} can be used to show that $\mathrm{U}(2)$-invariant Einstein manifolds satisfy
\begin{equation}
    \label{eq_PZ}
    f_1^2 = \frac{f_2^2(f_2')^2}{1+cf_2^2}
\end{equation} for some $c \in \mathbb{R}$, where K\"ahlerity is equivalent to $c=0$; an explicit calculation can be found in \cite{PetersenZhu1995}. The Einstein manifolds with Einstein constant $\lambda = -1$ are complete if and only if $c \geq 0$ \cite{PetersenZhu1995}. The complete non-K\"ahler metrics can be shown to satisfy the asymptotics $L_i \to {1}/{\sqrt{3}}$ and $R_i \to 0$ for $i\in\{1,2,3\}$ as $t \to \infty$, while the K\"ahler metrics achieve the only other $\mathrm{U}(2)$-invariant Einstein critical point which is unstable --- containing a 1 dimensional unstable manifold in both the $\mathrm{U}(2)$ and $\mathrm{SU}(2)$-invariant systems.

Writing $c$ in terms of $a_1,a_2,a_3$, one notes that the complete non-K\"ahler $\mathrm{U}(2)$-invariant Einstein metrics on $\mathbb{R}^4$ correspond to the one-parameter family ${1}/{18} < a_2 = a_3 = {1}/{3}-a_1$ while the K\"ahler Einstein metric is given by $a_1={2}/{9}$ and $a_2 = a_3 = {1}/{18}$, with metric functions
\[f_1(t) = \sqrt{6}\sinh\left(\frac{t}{\sqrt{6}}\right)\cosh\left(\frac{t}{\sqrt{6}}\right) \; \; \text{ and } \; \; f_2(t) = f_3(t) = \sqrt{6}\sinh\left(\frac{t}{\sqrt{6}}\right).\]

On $\mathcal{O}(-n)$, equation (\refeq{eq_PZ}) now implies 
\[c = \frac{1}{4n^3\beta}\left(2(2+n)\beta + n\right)\left(2(2-n)\beta +n\right).\]
In particular, if $n \in \{1,2\}$ then $c > 0$ and all $\mathrm{U}(2)$-invariant Einstein metrics $\alpha = 0 = \gamma, \; \beta >0$ are complete. If $n \geq 3$, the complete non-K\"ahler $\mathrm{U}(2)$-invariant Einstein metrics are given by $\alpha = 0 = \gamma$ and $\beta < {n}/{(2n-4)}$, while $\beta = {n}/{(2n-4)}$ gives the K\"ahler Einstein metrics on $\mathcal{O}(-n)$ found by \cite{GibbonsPope1979} and \cite{Pedersen1985}.

If $n \not \in \{1,2,4\}$, $\mathrm{SU}(2)$-invariant metrics on $\mathcal{O}(-n)$ are necessarily $\mathrm{U}(2)$-invariant and all Einstein manifolds are hence already known --- given above. However, on $\mathbb{R}^4$ and $\mathcal{O}(-n)$ for $n \in \{1,2,4\}$, the $\mathrm{U}(2)$-invariant one-parameter families can be perturbed to construct $\mathrm{SU}(2)$-invariant but not $\mathrm{U}(2)$-invariant Einstein metrics. also a family of complete $\mathrm{SU}(2)$-invariant K\"ahler Einstein metrics corresponding to the parameters $\alpha = 0$, $\beta >0$ and $\gamma = \pm {1}/{4}$ (see Lemma \refeq{lem_SU2STE}), however these metrics have the aforementioned K\"ahler asymptotics allowing us to exclude them from the following lemma. All smooth $\mathrm{SU}(2)$-invariant K\"ahler Einstein metrics are known to be complete \cite{DancerStrachan1994}, so we focus on the non-K\"ahler setting.

\begin{lemma}\label{lem_einstein}
    There exists open sets $\Omega^E \subseteq \mathbb{R}_{\geq 0}^2$ and $\Omega_n^E \subseteq \mathbb{R}^+ \times \mathbb{R}_{\geq 0}$ for $n \in \{1,2,4\}$ such that:
    \begin{itemize}
        \item each $(a_1-a_2,a_2-a_3) \in \Omega^E$ and $(\beta,\gamma) \in \Omega_n^E$ produces a distinct $\mathrm{SU}(2)$-invariant complete Einstein metric on $\mathbb{R}^4$ and $\mathcal{O}(-n)$ respectively;
        \item \big($[0,{1}/{6}) \times \{0\}\big)\cup \big(\{0\} \times \mathbb{R}_{\geq 0} \big) \subseteq \Omega^E$ and $({1}/{6},0) \not \in \Omega^E$;
        \item $\mathbb{R}^+ \times \{0\} \subseteq \Omega_n^E$  for $n \in \{1,2\}$ and $(\mathbb{R}^+ \times \{{1}/{4}\}) \cap \Omega_2^E = \emptyset$; and 
        \item $(0,1) \times \{0\} \subseteq \Omega_4^E$ and $(1,0) \not \in \Omega_4^E$.
    \end{itemize}
\end{lemma}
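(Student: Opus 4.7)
The plan is to perturb the known complete $\mathrm{U}(2)$-invariant Einstein families into genuine $\mathrm{SU}(2)$-invariant Einstein metrics, using continuous dependence of ODE solutions combined with attractor analysis. By Lemmas \refeq{lem_fixedSTE} and \refeq{lem_SU2STE}, imposing $\alpha = 0$ reduces the system to the Einstein subsystem of (\refeq{eq_xiSU2})--(\refeq{eq_LiSU2}) with $\xi = L_1 + L_2 + L_3$ and, setting $\lambda = -1$, leaves the two-parameter families $(a_1 - a_2, a_2 - a_3)$ on $\mathbb{R}^4$ and $(\beta,\gamma)$ on $\mathcal{O}(-n)$. Membership of the $\mathrm{U}(2)$-invariant slices $\gamma = 0$ or $a_2 = a_3$ in the asserted ranges is already covered by the analysis of (\refeq{eq_PZ}), via the classical completeness arguments of \cite{PetersenZhu1995,GibbonsPope1979,Pedersen1985}.

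To obtain openness around each known complete $\mathrm{U}(2)$-invariant Einstein metric, I would analyze the critical points of the autonomous Einstein subsystem. The asymptotically hyperbolic critical point with $L_i = 1/\sqrt{3}$, $R_i = 0$, $\xi = \sqrt{3}$ is attracting, and its stable manifold contains every non-K\"ahler $\mathrm{U}(2)$-invariant Einstein orbit (in agreement with the asymptotics $L_i \to 1/\sqrt{3}$, $R_i \to 0$ recorded in \S 4.3). By continuous dependence on parameters (Lemmas \refeq{lem_fixedSTE}, \refeq{lem_SU2STE}, and Corollary \refeq{cor_EAU}), an $\mathrm{SU}(2)$-invariant perturbation remains close to the unperturbed orbit on any prescribed compact interval $[0,T]$. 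Taking $T$ large enough that the unperturbed orbit enters a small forward-invariant neighborhood $U$ of the attracting critical point---constructed by monotonicity arguments analogous to Lemmas \refeq{lem_alpha_monotone_n} and \refeq{lem_alpha_monotone_fixed}---the perturbed orbit also enters $U$ and is attracted to the critical point. This yields existence for all $t \geq 0$, hence completeness by Corollary \refeq{cor_EAU}, and openness of $\Omega^E$ and $\Omega_n^E$. Distinctness of the resulting metrics across distinct parameters is immediate from the uniqueness statement in Corollary \refeq{cor_EAU} together with Theorem \refeq{thm_diagonalization}.

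The main obstacle is the non-membership of the K\"ahler Einstein parameters $(1/6, 0) \notin \Omega^E$, $(\beta, 1/4) \notin \Omega_2^E$, and $(1,0) \notin \Omega_4^E$. These orbits approach a \emph{different}, unstable critical point of the Einstein subsystem---the $c = 0$ boundary of (\refeq{eq_PZ})---whose stable manifold has strictly higher codimension than the Einstein condition alone. To rule out openness at these points I would linearize the Einstein subsystem at the K\"ahler critical point, identify the unstable eigendirection transverse to the K\"ahler family, and show that the leading-order $\mathrm{SU}(2)$-invariant perturbation (read off from the expansions of \S 3.2 and Table \refeq{tab_compact_smoothness_approx}) carries a nontrivial component along this unstable direction. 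A generic perturbation then drives the orbit off the stable manifold, forcing a metric function to vanish or blow up in finite $t$ and breaking completeness. This transversality computation---showing that no open neighborhood of the K\"ahler parameter in the $\mathrm{SU}(2)$-invariant direction maps into the stable manifold---is the most delicate step.
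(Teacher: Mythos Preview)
Your argument for openness is essentially the paper's: linearize the Einstein subsystem at the hyperbolic critical point $L_i=1/\sqrt{3}$, $R_i=0$, find all eigenvalues negative, and conclude that the basin of attraction is open; combined with continuous dependence on initial data, this gives the open sets. That part is fine.

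The gap is in how you are implicitly defining $\Omega^E$ and $\Omega_n^E$, which then forces you into an unnecessary and ultimately incorrect argument for non-membership. You appear to take $\Omega^E$ to be the set of parameters yielding \emph{complete} Einstein metrics, and then try to exclude the K\"ahler parameters by arguing that nearby $\mathrm{SU}(2)$-perturbations produce incomplete metrics via a transversality computation at the K\"ahler critical point. But this is the wrong target: the K\"ahler--Einstein metrics at $(1/6,0)$, $(\beta,1/4)$ and $(1,0)$ are themselves complete (this is stated in the paper just before the lemma and is classical), so under your implicit definition they would lie in $\Omega^E$ and the non-membership claims would be false. Showing that generic perturbations leave the stable manifold of the K\"ahler critical point says nothing about the K\"ahler parameter itself.

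The paper avoids this entirely by \emph{defining} $\Omega^E$ and $\Omega_n^E$ as the set of parameters whose Einstein trajectories satisfy $L_i\to 1/\sqrt{3}$ and $R_i\to 0$ as $t\to\infty$, i.e.\ the basin of attraction of the hyperbolic critical point. With this definition, non-membership of the K\"ahler parameters is immediate: those trajectories are already known to converge to the \emph{other} (K\"ahler) critical point, not the hyperbolic one. Openness is then a one-line consequence of the eigenvalue computation you outlined, and completeness for parameters in $\Omega^E$ follows from Corollary~\ref{cor_EAU} since the trajectory exists for all $t>0$. No transversality argument is needed anywhere.
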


\begin{proof}
    By Lemmas \refeq{lem_fixedSTE} and \refeq{lem_SU2STE}, if $\alpha = 0$, then (\refeq{eq_xiSU2}) is a first integral of the system (\refeq{eq_xiSU2})--(\refeq{eq_LiSU2}) and the solution is Einstein. Corollary \refeq{cor_param} implies the parameters satisfying $(a_1-a_2,a_2-a_3) \in \mathbb{R}_{\geq 0}^2$ and $(\beta,\gamma) \in \mathbb{R}^+\times \mathbb{R}_{\geq 0}$ are parameters to distinct (not necessarily complete) solutions of the Einstein equations (\refeq{eq_RiSU2})--(\refeq{eq_LiSU2}). We define $\Omega^E \subseteq \mathbb{R}_{\geq 0}^2$ and $\Omega_n^E \subseteq \mathbb{R}^+ \times \mathbb{R}_{\geq 0}$ as the set of parameters such that the corresponding solution to (\refeq{eq_RiSU2})--(\refeq{eq_LiSU2}) satisfies $L_i \to {1}/{\sqrt{3}}$ and $R_i \to 0$ for $i\in\{1,2,3\}$ as $t \to \infty$. Corollary \refeq{cor_EAU} implies the corresponding metrics are complete and by Lemma \refeq{lem_SU2STE} and the discussion above, it only remains to prove these sets are open which is trivial as the critical point $(\frac{1}{\sqrt{3}},\frac{1}{\sqrt{3}},\frac{1}{\sqrt{3}},0,0,0)$ is stable in (\refeq{eq_RiSU2})--(\refeq{eq_LiSU2}). Indeed, if $Z = (L_1 - \frac{1}{\sqrt{3}},L_2 - \frac{1}{\sqrt{3}},L_3 - \frac{1}{\sqrt{3}},R_1,R_2,R_3)$, then $Z' = AZ+B(Z)$ where $B(0) = 0$, $DB(0)=0$ and 
    \[ A = \frac{1}{\sqrt{3}}\left(\begin{matrix} 
        -4 & -1 & -1 & 0 & 0 & 0 \\
        -1 & -4 & -1 & 0 & 0 & 0 \\
        -1 & -1 & -4 & 1 & 0 & 0 \\
        0 & 0 & 0 & -1 & 0 & 0 \\
        0 & 0 & 0 & 0 & -1 & 0 \\
        0 & 0 & 0 & 0 & 0 & -1 \\
    \end{matrix}\right)\]
    which has eigenvalues $-2\sqrt{3},-\sqrt{3},-\sqrt{3},-{1}/{\sqrt{3}},-{1}/{\sqrt{3}},-{1}/{\sqrt{3}}<0$. The result then follows by the continuity of $\Theta_1$ and $\Theta_2$ in Lemmas \refeq{lem_fixedSTE} and \refeq{lem_SU2STE} and the flow for equations (\refeq{eq_RiSU2})--(\refeq{eq_LiSU2}).
\end{proof}

\begin{figure}[h!]
    \caption{Numerical simulation of $\Omega_\cdot^E$}
    \; \includegraphics[scale = 0.45]{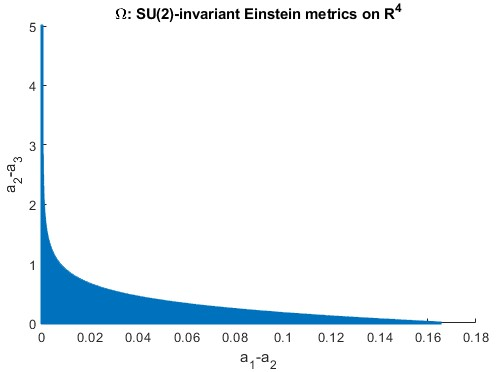}\includegraphics[scale = 0.45]{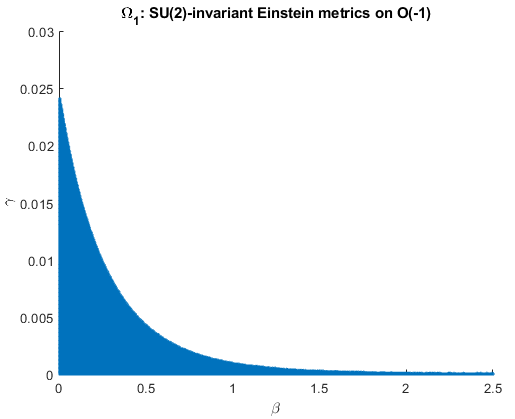}
    \includegraphics[scale = 0.45]{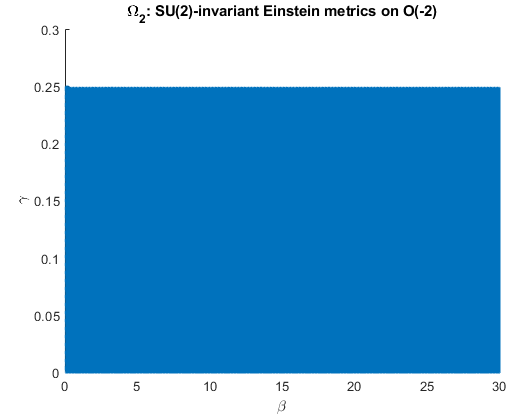}\includegraphics[scale = 0.45]{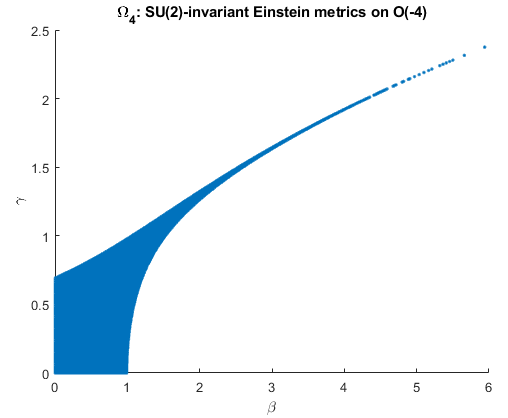}\label{fig_einstein_fixed_1_2_4}
\end{figure}

Using MATLAB, we numerically simulate the sets $\Omega^E$,$\Omega_1^E$,$\Omega_2^E$ and $\Omega_4^E$, as the shaded areas in Figure \refeq{fig_einstein_fixed_1_2_4}. The plots in Figure \refeq{fig_einstein_fixed_1_2_4} give a clearer picture of the space of complete Einstein metrics improving on the last 3 conditions in Lemma \refeq{lem_einstein}. In particular, we predict that $\Omega_2 = \mathbb{R}^+ \times (0,{1}/{4})$.

\subsection{Expanding non-Einstein solitons}
In this subsection we produce the non-Einstein metrics in Theorem \refeq{thm_expanders}. We again first recall some important relevant expanding gradient Ricci solitons and write them in terms of the parameters in \S 3.

Following the work in \cite{Cao1996,Cao1997,FIK2003}, one finds that K\"ahler $\mathrm{U}(2)$-invariant Ricci solitons on $I \times \mathrm{SU}(2)/\mathbb{Z}_n \cong I \times \mathbb{S}^3/\mathbb{Z}_n \cong (\mathbb{C}^2 \setminus \{0\})/\mathbb{Z}_n$ are of the form 
\begin{alignat*}{2}
    g &= \phi_r(\frac{1}{4}dr^2+\eta^2)+\phi\cdot \pi^*g_{FS} &&= \phi_r(\frac{1}{4}dr^2+\omega_i^2)+\phi(\omega_j^2+\omega_k^2)\\ 
    &= \; dt^2 + \frac{\phi_{t}^2}{4}\omega_i^2+\phi(\omega_j^2+\omega_k^2) &&= dt^2 + {(ff_t)}^2\omega_i^2+f^2(\omega_j^2+\omega_k^2)
\end{alignat*}
where $g_{FS}$ is the Fubini--Study metric on $\mathbb{CP}^1$, $\eta$ is a one form dual to the vertical fiber direction of the Hopf fibration, $\pi:\mathbb{S}^3 \to \mathbb{CP}^1$, $r = \log(|z|^2)$ and $\phi = P_r$ is the derivative of the $\mathrm{U}(2)$-invariant K\"ahler potential. In particular, the last equality implies K\"ahlerity for $\mathrm{U}(2)$-invariant gradient Ricci solitons still yields (\refeq{eq_PZ}) with $c = 0$ (see \S 5.3 for more details). The one-parameter families of $\mathrm{U}(2)$-invariant expanding Ricci solitons found in \cite{Cao1997} and \cite{FIK2003} can be given in terms of parameters (\refeq{eq_a_i}),(\refeq{eq_alpha}) and (\refeq{eq_beta})--(\refeq{eq_gamma}) using Lemma \refeq{lem_fixedSTE} and Lemma \refeq{lem_SU2STE} respectively as in Remark \refeq{rem_GRS_parameters} below. We also note that these solitons satisfy (\refeq{eq_asxi}) since they are asymptotically conical and solve (\refeq{eq_ODE1}).

\begin{lemma}\label{lem_noneinstein}
    There exists open sets $\Omega \subseteq \mathbb{R}^+ \times \mathbb{R}_{\geq 0}^2$, $\Omega_n \subseteq {\mathbb{R}^+}^2 \times \mathbb{R}_{\geq 0}$ for $n \in \{1,2,4\}$ and $\Omega_n \subseteq {\mathbb{R}^+}^2$ for $n \not\in \{1,2,4\}$ such that:
    \begin{itemize}
        \item each $(\alpha,a_1-a_2,a_2-a_3) \in \Omega$, $(\alpha,\beta,\gamma) \in \Omega_n$ for $n \in \{1,2,4\}$ and $(\alpha,\beta) \in \Omega_n$ for $n \not\in \{1,2,4\}$ produces a distinct $\mathrm{SU}(2)$-invariant complete non-Einstein expanding gradient Ricci soliton on $\mathbb{R}^4$ and $\mathcal{O}(-n)$ respectively;
        \item $\mathbb{R}^+ \times\{0\}\times \{0\}$, $\{(\alpha,({1-\alpha})/{6},0): \alpha \in (0,1)\}$, $\{(\alpha,0,({\alpha-1})/{6}): \alpha >1\} \subseteq \Omega$;
        \item $\Omega_n' \cap ({\mathbb{R}^+}^2 \times \mathbb{R}_{\geq 0}) \subseteq \Omega_n$ for $n \in \{1,2,4\}$ where $\Omega_n'$ is an open neighborhood of $\mathbb{R}^+ \times \{0\}\times\{0\}$ in $\mathbb{R}^3$;
        \item $\Omega_n' \cap {\mathbb{R}^+}^2 \subseteq \Omega_n$ for $n \not\in \{1,2,4\}$ where $\Omega_n'$ is an open neighborhood of $\mathbb{R}^+ \times \{0\}$ in $\mathbb{R}^2$; and 
        \item $\mathbb{R}^+ \times \{1\} \times \{0\} \subseteq \Omega_4$ and $\mathbb{R}^+ \times \{{n}/{(2n-4)}\} \subseteq \Omega_n$ for all other $n \geq 3$.
    \end{itemize}
\end{lemma}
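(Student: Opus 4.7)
The proof mirrors that of Lemma \refeq{lem_einstein} but the limiting behaviour of interest --- an asymptotically conical blow-up at $t=\infty$ --- is a degenerate feature of \refeq{eq_xiSU2}--\refeq{eq_LiSU2} rather than a finite attracting equilibrium. The plan is: (i) identify explicit ``seed'' complete expanders along each curve listed in the statement; (ii) perform a centre manifold analysis of \refeq{eq_xiSU2}--\refeq{eq_LiSU2} at the conical asymptote at infinity; and (iii) invoke continuous dependence of $\Theta_1$ and $\Theta_n$ from Lemmas \refeq{lem_fixedSTE}--\refeq{lem_SU2STE} to promote perturbations of the seeds into global asymptotically conical expanders, which are then complete by Corollary \refeq{cor_EAU}. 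Distinctness of the resulting metrics is inherited from the injective parameterisations in those lemmas.

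For step (i), on $\mathbb{R}^4$ the curve $\mathbb{R}^+\times\{0\}\times\{0\}$ is the $\mathrm{SO}(4)$-invariant Bryant family recovered in \S4.2, shown to be complete and asymptotically conical by Lemma \refeq{lem_fixed_dynamics}; Lemma \refeq{lem_fixedSTE} identifies the two K\"ahler curves $(a_1-a_2,a_2-a_3)=((1-\alpha)/6,0)$ and $(0,(\alpha-1)/6)$ with the Cao--Chave--Valent family of \cite{Cao1997,ChaveValent1996}, also already known to be complete and conical. On $\mathcal{O}(-n)$ the curve $\mathbb{R}^+\times\{0\}\times\{0\}$ is the degenerate $\mathrm{U}(2)$-invariant family of \S4.1 (complete and asymptotically conical by Lemma \refeq{lem_n_dynamics}), and the line $\beta=n/(2n-4)$ is the K\"ahler FIK family of \cite{FIK2003}. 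Each of these seeds satisfies the conical asymptotics \refeq{eq_asxi}--\refeq{eq_asRi}. For step (ii), introduce rescaled coordinates $(\xi/t,\,tL_i,\,tR_i)$ and the logarithmic time $s=\log t$: the conical limit corresponds to a three-parameter family of equilibria $\xi/t=1$, $tL_i=1$, $tR_i=c_i$, parameterised exactly by the cone constants $c_i=\lim f_i/t$. With $\lambda=-1$, the transverse linearisation has strictly stable spectrum bounded below by a fixed positive constant, so each seed sits on a normally hyperbolic stable/centre manifold of dimension equal to the parameter count in the statement. Defining $\Omega$ and $\Omega_n$ to be the sets of parameters whose short-time solution from Lemmas \refeq{lem_fixedSTE}--\refeq{lem_SU2STE} asymptotes to this attractor, continuity of $\Theta_1,\Theta_n$ together with the normal hyperbolicity forces these sets to be open and to contain the seed curves with their prescribed inclusions.

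The main obstacle is step (ii). The ``equilibrium at infinity'' is highly degenerate: besides the three benign central directions given by the conical constants, a further distinguished direction is occupied by the K\"ahler-Einstein asymptote --- complex hyperbolic space on $\mathbb{R}^4$, and the Pedersen--Gibbons--Pope metrics on $\mathcal{O}(-n)$ for $n\geq 3$ --- which lies on the \emph{boundary} of $\Omega$ and $\Omega_n$ rather than in their interior. The centre manifold reduction therefore has to be carried out with enough precision to separate the generic asymptotically conical basin from this codimension-one K\"ahler-Einstein locus, and this precision is exactly what dictates the parameter counts $(\alpha,a_1-a_2,a_2-a_3)$ and $(\alpha,\beta,\gamma)$ appearing in the statement. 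The same analysis also explains why no additional K\"ahler line appears in $\Omega_1$ or $\Omega_2$: the formula $\beta=n/(2n-4)$ degenerates for $n\in\{1,2\}$, reflecting the absence of a K\"ahler-Einstein expander on $\mathcal{O}(-1)$ and $\mathcal{O}(-2)$.
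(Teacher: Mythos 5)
Your overall skeleton agrees with the paper's proof: identify the seed families (the Bryant expanders of \S 4.2, the degenerate $\beta=0$ metrics of \S 4.1, and the Cao and FIK K\"ahler expanders), define $\Omega$ and $\Omega_n$ as the set of parameters whose trajectories achieve the conical asymptotics, and deduce openness from stability of the limit together with continuity of $\Theta_1,\Theta_n$ and completeness via Corollary \refeq{cor_EAU}. The genuine gap is in your step (ii). First, in your coordinates $(\xi/t,\,tL_i,\,tR_i)$ with $s=\log t$ the system is not autonomous: for instance $\frac{d}{ds}(tL_i)=tL_i+t^2L_i'$ carries a coefficient $t^2=e^{2s}$ multiplying $\bigl(1-(\xi/t)(tL_i)\bigr)$, so in these variables there is no well-defined equilibrium family, let alone a "transverse linearisation" with a spectrum. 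Second, and decisively, once the problem is made autonomous --- the paper sets $\mathcal{L}=1/\xi$ and reparameterizes by $\frac{dt}{ds}=\mathcal{L}$ to get (\refeq{eq_ReSU2}) --- the conical limit is the origin, lying inside the equilibrium set $\{\mathcal{L}=0,\,L_1=L_2=L_3=0\}$, and the linearization there has eigenvalue $-1$ with multiplicity three and eigenvalue $0$ with multiplicity \emph{four} (the $\mathcal{L}$ and $R_i$ directions). The transverse spectrum thus contains $0$: normal hyperbolicity fails precisely in the $\mathcal{L}$ direction, whose decay is equivalent to the asymptotics (\refeq{eq_asxi}) you are trying to establish. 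Your assertion of a "strictly stable spectrum bounded below by a fixed positive constant" is therefore false, and it is also internally inconsistent with invoking a centre manifold (a spectral gap transverse to the equilibria would make that machinery unnecessary). The actual content of the paper's proof is the computation of the reduced dynamics on the centre manifold, $\mu_i'=-\mu_i\mu_4^2(1+O(|Y_c|))$ and $\mu_4'=-\mu_4^3(1+O(|Y_c|))$ as in (\refeq{eq_Lidynamics})--(\refeq{eq_xidynamics}): asymptotic stability of the origin comes from these cubic terms, with only polynomial (not exponential) decay along the centre directions. That computation, or a substitute for it, is missing from your proposal and cannot be replaced by a normal hyperbolicity argument.

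Your final paragraph also misdiagnoses the role of the Einstein metrics. Einstein solutions correspond to $\alpha=0$, which is excluded from the parameter domains $\mathbb{R}^+\times\cdots$ from the outset, and they converge to entirely different critical points (the non-K\"ahler ones to $L_i=1/\sqrt{3}$, $R_i\to 0$, as used in Lemma \refeq{lem_einstein}; the K\"ahler--Einstein ones to the other unstable $\mathrm{U}(2)$-invariant critical point), not to the origin of (\refeq{eq_ReSU2}). So no "separation of the conical basin from a codimension-one K\"ahler--Einstein locus" is needed inside the centre manifold reduction: openness of the basin of the origin suffices. The K\"ahler metrics that do appear in the interior curves of the statement --- the Cao expanders in $\Omega$ and the FIK expanders at $\beta=n/(2n-4)$ in $\Omega_n$, $n\geq 3$ --- are non-Einstein and satisfy the conical asymptotics (\refeq{eq_asxi})--(\refeq{eq_asRi}), which is exactly how the paper places them in $\Omega$ and $\Omega_n$.
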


\begin{remark}
    \label{rem_GRS_parameters}
    The family $\mathbb{R}^+ \times\{0\}\times \{0\} \subseteq \Omega$ is the $\mathrm{SO}(4)$-invariant family constructed in \S 4.2 above. The family $\{(\alpha,({1-\alpha})/{6},0): \alpha \in (0,1)\} \cup \{(\alpha,0,({\alpha-1})/{6}): \alpha >1\} \subseteq \Omega^E$ is the $\mathrm{U}(2)$-invariant K\"ahler expanders of \cite{Cao1997}. The families $\mathbb{R}^+ \times \{0\}\times\{0\}$ and $\mathbb{R}^+ \times\{0\}$ on the boundary of $\Omega_n$ are from the degenerate metrics constructed in \S 4.1. Finally the families $\mathbb{R}^+ \times \{1\} \times \{0\} \subseteq \Omega_4$ and $\mathbb{R}^+ \times \{{n}/{(2n-4)}\} \subseteq \Omega_n$ for all other $n \geq 3$ are the $\mathrm{U}(2)$-invariant K\"ahler expanders of \cite{FIK2003}. 
\end{remark}

\begin{proof}[Proof of Lemma 4.6]
Consider the further transformation of (\refeq{eq_xiSU2})--(\refeq{eq_LiSU2}) given by $\mathcal{L} = {1}/{\xi}$. We define $\Omega$ and $\Omega_n$ as the set of parameters in their corresponding spaces such that $Z =(\mathcal{L},L_1,L_2,L_3,R_1,R_2,R_3) \to 0$ as $t \to \infty$. Since all solitons listed in the previous remark satisfy these asymptotics, it suffices to prove $\Omega$ are $\Omega_n$ are open. Fix parameters in $\Omega$ or $\Omega_n$ and choose $\tau$ as in Lemma \refeq{lem_fixedSTE} or \refeq{lem_SU2STE} respectively. The initial behavior of $\xi$ implies we can reparameterize via an injective function $t:\mathbb{R} \to \mathbb{R}^+$ satisfying $t(0) = \tau>0$ and $\frac{dt}{ds} = \mathcal{L}(t(s))$. In particular, $\displaystyle \lim_{s \to -\infty} t(s) = 0$ and if (\refeq{eq_asxi}) is satisfied, then $\displaystyle \lim_{s \to \infty} t(s) = \infty$ and the metric will be complete by Corollary \refeq{cor_EAU}. Under such a transformation, equations (\refeq{eq_xiSU2})--(\refeq{eq_LiSU2}) become:
\begin{equation}
    \label{eq_ReSU2}
\begin{aligned}
    \frac{d\mathcal{L}}{ds} &= \mathcal{L}^3(L_1^2+L_2^2+L_3^2-1); \\
    \frac{dR_i}{ds} &= \mathcal{L}R_i(L_i-L_j-L_k); \; \text{ and} \\
    \frac{dL_i}{ds} &= -L_i+\mathcal{L}(2R_i^2-2{(R_j-R_k)}^2+1).
\end{aligned}
\end{equation}
Therefore, as in the proof of Lemma {\refeq{lem_einstein}}, it suffices to prove $Z=0$ is asymptotically stable in the system (\refeq{eq_ReSU2}) and solutions satisfy (\refeq{eq_asxi}). Linearising $Z =(\mathcal{L},L_1,L_2,L_3,R_1,R_2,R_3)$ about $0$, one computes $Z' = AZ+B(Z)$ where $B(0) = 0$, $DB(0)=0$ and 
\[ A = \left(\begin{matrix} 
    0 & 0 & 0 & 0 & 0 & 0 & 0 \\
    -1 & 1 & 0 & 0 & 0 & 0 & 0 \\
    -1 & 0 & 1 & 0 & 0 & 0 & 0 \\
    -1 & 0 & 0 & 1 & 0 & 0 & 0 \\
    0 & 0 & 0 & 0 & 0 & 0 & 0 \\
    0 & 0 & 0 & 0 & 0 & 0 & 0 \\
    0 & 0 & 0 & 0 & 0 & 0 & 0 \\
\end{matrix}\right).\]
We diagonalize $A$, $AP=PD$ where $D = \text{diag}(0,0,0,0,-1,-1,-1)$ and consider the transformation $\mu = P^{-1}Z$. The ODE system becomes
\[\mu' = D\mu +P^{-1}B(P\mu)\]
where
\[ P = \left(\begin{matrix} 
    0 & 0 & 0 & 1 & 0 & 0 & 0 \\
    0 & 0 & 0 & 1 & 1 & 0 & 0 \\
    0 & 0 & 0 & 1 & 0 & 1 & 0 \\
    0 & 0 & 0 & 1 & 0 & 0 & 1 \\
    1 & 0 & 0 & 0 & 0 & 0 & 0 \\
    0 & 1 & 0 & 0 & 0 & 0 & 0 \\
    0 & 0 & 1 & 0 & 0 & 0 & 0 \\
\end{matrix}\right) \; \text{ and } \; 
P^{-1} = \left(\begin{matrix} 
    0 & 0 & 0 & 0 & 1 & 0 & 0 \\
    0 & 0 & 0 & 0 & 0 & 1 & 0 \\
    0 & 0 & 0 & 0 & 0 & 0 & 1 \\
    1 & 0 & 0 & 0 & 0 & 0 & 0 \\
    -1 & 1 & 0 & 0 & 0 & 0 & 0 \\
    -1 & 0 & 1 & 0 & 0 & 0 & 0 \\
    -1 & 0 & 0 & 1 & 0 & 0 & 0 \\
\end{matrix}\right)\]
or explicitly
\begin{alignat*}{3}
    \frac{dR_i}{ds} &= &&\mu_i' &&= \mu_i\mu_4(-\mu_4+\mu_{i+4}-\mu_{j+4}-\mu_{k+4}),\\
    \frac{d\mathcal{L}}{ds} &= \; \; &&\mu_4' \;  &&= \mu_4^3\big({(\mu_4+\mu_5)}^2+{(\mu_4+\mu_6)}^2+{(\mu_4+\mu_7)}^2-1\big) \; \text{ and } \hspace{1.9cm}
\end{alignat*}
\begin{multline}
    \label{eq_Lidynamics}    
    \frac{d(L_i-\mathcal{L})}{ds} = \mu_{i+4}' = -\mu_{i+4}\\
    +\mu_4\Bigg(2\mu_i^2-2{(\mu_j-\mu_k)}^2+\mu_4^2\Big(1-{(\mu_4+\mu_{i+4})}^2-{(\mu_4+\mu_{j+4})}^2-{(\mu_4+\mu_{k+4})}^2\Big)\Bigg)
\end{multline}
for $i,j,k \in \{1,2,3\}$, $i \neq j \neq k \neq i$ again. Splitting into the stable and centre eigenspaces $Y_c = (\mu_1,\mu_2,\mu_3,\mu_4)$ and $Y_s = (\mu_5,\mu_6,\mu_7)$, we have equations
\[Y_c' = F_c(Y_c,Y_s) \; \; \; \text { and } \; \; \; Y_s' = -Y_s+F_s(Y_c,Y_s).\]
By the center manifold theorem, there exists a function $H:\mathbb{R}^4 \to \mathbb{R}^3$ and a constant $\varepsilon>0$ such that a local (analytic) centre manifold is a graph of $H$:
\[W_{loc}^c = \{(Y_c,H(Y_c)):|Y_c|<\varepsilon\}.\]
In particular, since centre manifolds begin at the origin and are tangential to the eigenvectors corresponding to $0$, $H(0) = 0$ and $DH(0) = 0$. Moreover, as centre manifolds are part of the flow $Y_s = H(Y_c)$ and $Y_s' = DH(Y_c)Y_c'$. This yields the partial differential equation for $H$:
\[DH(Y_c)F_c(Y_c,H(Y_c)) = -H(Y_c) + F_s(Y_c,H(Y_c)),\]
which can be written as
\begin{multline*}
    H^i := \frac{H_i}{\mu_4} =2\mu_i^2-2{(\mu_j-\mu_k)}^2+\mu_{4}^2\big(1-{(\mu_4+H_i)}^2-{(\mu_4+H_j)}^2-{(\mu_4+H_k)}^2\big) \\
    -\bigg(\frac{\partial H_i}{\partial \mu_1}(\mu_1(-\mu_4+H_1-H_2-H_3))+\frac{\partial H_i}{\partial \mu_2}(\mu_2(-\mu_4+H_2-H_1-H_3))\\
    + \frac{\partial H_i}{\partial \mu_3}(\mu_3(-\mu_4+H_3-H_1-H_2))+\frac{\partial H_i}{\partial \mu_4}\mu_4^2\big({(\mu_4+H_1)}^2+{(\mu_4+H_2)}^2+{(\mu_4+H_3)}^2-1\big) \bigg),
\end{multline*}
where $H = (H_1,H_2,H_3)$ and $i \in \{1,2,3\}$. In particular, we can extend $H^i$ smoothly to some neighborhood of the origin in $\mathbb{R}^4$ such that $H^i(Y_c) = O(|Y_c|)$ as $Y_c \to 0$. The origin is now easily seen to be asymptotically stable since the dynamics along the centre manifold are
\begin{equation}
    \mu_i' = -\mu_i\mu_4^2(1-H^1-H^2-H^3) = -\mu_i\mu_4^2(1+O(|Y_c|))
\end{equation}
for $i \in \{1,2,3\}$ and
\begin{equation}
    \label{eq_xidynamics}
    \mu_4' = -\mu_4^3\big(1-{(\mu_4+H_1)}^2-{(\mu_4+H_2)}^2-{(\mu_4+H_3)}^2\big) = -\mu_4^3(1+O(|Y_c|)).
\end{equation}
Moreover, (\refeq{eq_xidynamics}) implies (\refeq{eq_asxi}) for all solutions with parameters in $\Omega$ or $\Omega_n$.
\end{proof}

Equations (\refeq{eq_Lidynamics})--(\refeq{eq_xidynamics}) reproduce the asymptotics in (\refeq{eq_asxi})--(\refeq{eq_asRi}). Therefore, all solitons in Lemma \refeq{lem_noneinstein} are asymptotically conical. Theorem \refeq{thm_expanders} is now proven as it is a consequence of Lemmas \refeq{lem_einstein} and \refeq{lem_noneinstein}. 

For the remainder of \S 4, we numerically simulate some slices of $\Omega$ and $\Omega_n$ using MATLAB. The following figures aim to highlight the large scope of new metrics produced via Theorem \refeq{thm_expanders}. We begin by plotting slices of $\Omega$ in Figure \refeq{fig_noneinstein_fixed}, where we also display the one-parameter family of metrics found by Cao in the $\mathrm{U}(2)$-invariant slice. We also include some slices of $\Omega_1$, $\Omega_2$ and $\Omega_4$ in Figure \refeq{fig_noneinstein_n}.

\begin{figure}[h]
    \caption{Numerical simulation of $\Omega$}
    \includegraphics[scale = 0.41]{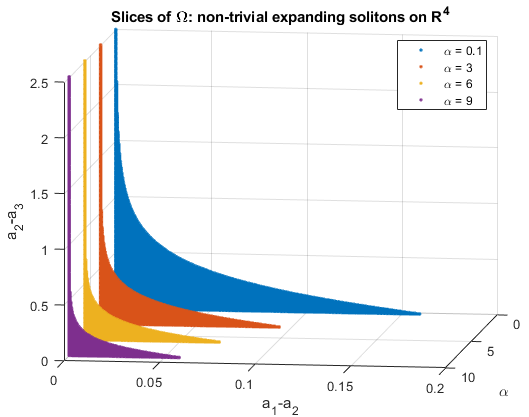}  \includegraphics[scale = 0.42]{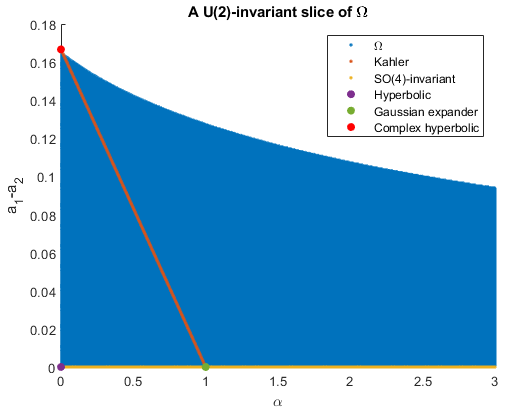}\label{fig_noneinstein_fixed}
\end{figure}
\pagebreak
\begin{figure}[h!]
    \caption{Numerical simulation of $\Omega_n$}
    \includegraphics[scale = 0.41]{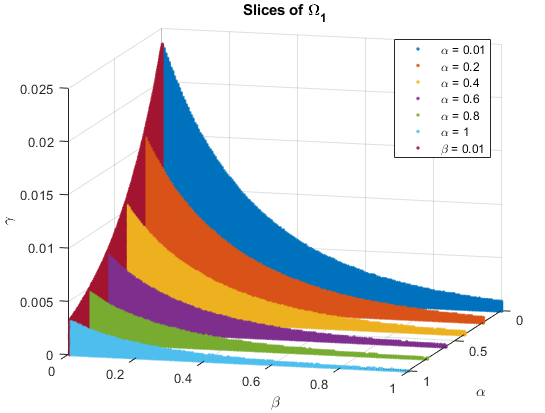}\includegraphics[scale = 0.42]{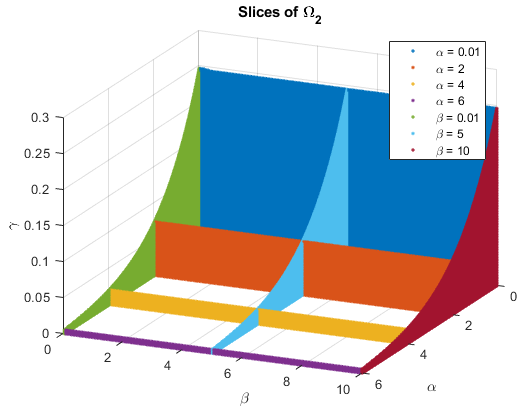}
    \includegraphics[scale = 0.42]{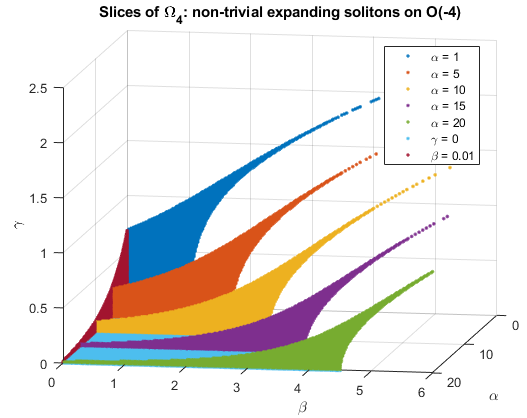}\includegraphics[scale = 0.42]{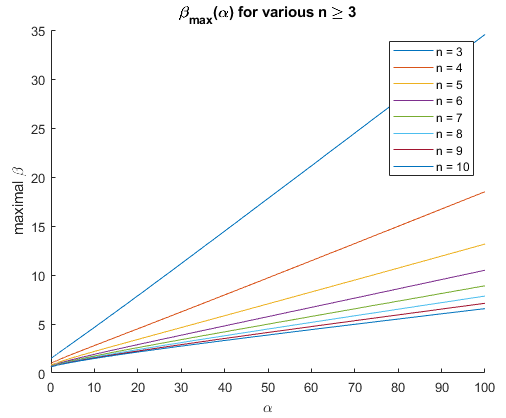}\label{fig_noneinstein_n}
\end{figure}

We note that for all $n \geq 3$, $\Omega_n$ (including the $\mathrm{U}(2)$-invariant slice of $\Omega_4$) appears to admit continuous, asymptotically affine functions $\beta_{\max}^n:\mathbb{R}^+ \to \mathbb{R}^+$, $\beta_{\max}^n(\alpha) = \sup_{\beta \in \mathbb{R}^+}\{\beta:(\alpha,\beta) \in \Omega_n\}$ such that $\lim_{\alpha \to 0^+} \beta_{\max}^n(\alpha) = {n}/{(2n-4)}$ and $(\alpha,\beta) \in \Omega_n$ if and only if $\beta < \beta_{\max}^n(\alpha)$. We also simulate $\beta_{\max}^n$ for $n \leq 3 \leq 10$ in Figure \refeq{fig_noneinstein_n}. If illustrated, the FIK expanders on $\mathcal{O}(n)$ would appear in the $\beta_{\max}^n$ plot as horizontal lines at ${n}/{(2n-4)}$.

Moreover, in all cases the numerics also suggest that the Einstein manifolds found in Lemma \refeq{lem_einstein} are dense in the boundary of $\Omega$ and $\Omega_n$ as $\alpha \to 0$.

\section{$\mathrm{SU}(2)$-invariant compact solitons}
For the remainder of this work we provide evidence for Conjecture \refeq{con_compact_solitons}. First, we recall that compact solitons are shrinking and we can assume, up to rescaling, $\lambda = 1$ (see \S 3). We also recall that we may assume the soliton potential is also $\mathrm{SU}(2)$-invariant if the gradient Ricci soliton is. Hoelscher's classification and the results of \S 3 imply that Conjecture \refeq{con_compact_solitons} is true if is it true over the spaces listed in Table \refeq{tab_compact} for the boundary conditions in Table \refeq{tab_compact_smoothness}.

We summarize in the following lemma.

\begin{lemma}
    \label{lem_compactconditions}
    If $g$ is a cohomogeneity one $\mathrm{SU}(2)$-invariant gradient Ricci soliton on a compact simply-connected $4$-manifold $M$ whose soliton potential is also invariant, then $M$ is diffeomorphic to one of 
    \[\mathbb{S}^4, \; \; \mathbb{S}^2 \times \mathbb{S}^2, \; \; \mathbb{CP}^2 \; \text{ or } \;  \mathbb{CP}^2\# \overline{\mathbb{CP}^2},\]
    and $g$ is of the form (\refeq{eq_metricfunctions}). The metric functions and the soliton potential satisfy equations (\refeq{eq_ODE1})--(\refeq{eq_ODEi}) with $\lambda > 0$ and smoothness conditions $u^{(odd)}(0)=0=u^{(odd)}(T)$ and as listed in Table \refeq{tab_compact_smoothness}. Moreover, for fixed $\lambda >0$, $g$ is uniquely determined by its corresponding parameters satisfying $(\alpha,a_1-a_2,a_2-a_3) \in \mathbb{R} \times {\mathbb{R}_{\geq 0}}^2$, $(\alpha,\beta,\gamma) \in \mathbb{R} \times \mathbb{R}^+ \times \mathbb{R}_{\geq 0}$ or $(\alpha,\beta) \in \mathbb{R} \times \mathbb{R}^+$, both at $0$ and at $T$. 
\end{lemma}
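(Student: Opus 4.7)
The plan is to assemble the statement by reading off each clause from results already established earlier in the paper, so most of the work lies in verifying that the ingredients combine cleanly. First I would dispose of the diffeomorphism classification. By Hoelscher's classification (recalled in Table \refeq{tab_compact}), any almost effective cohomogeneity one action on a compact simply-connected $4$-manifold is equivariantly diffeomorphic to one of the listed actions. Among these, precisely the actions whose acting group is (a cover of) $\mathrm{SU}(2)$ realize the four manifolds $\mathbb{S}^4$, $\mathbb{S}^2\times\mathbb{S}^2$, $\mathbb{CP}^2$ and $\mathbb{CP}^2\#\overline{\mathbb{CP}^2}$; the $\mathrm{SO}(3)$ rows lift to $\mathrm{SU}(2)$ actions via the double cover $\mathrm{SU}(2)\to\mathrm{SO}(3)$ and reproduce the same four manifolds (see the remark after Table \refeq{tab_compact}). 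The shrinking condition $\lambda>0$ follows from the discussion at the end of \S 2: by Perelman, a compact steady or expanding gradient soliton would be Einstein, but the presence of the nonabelian isometries forces (via Besse, Theorem 1.84) contradictions with the abelian constraint on the Einstein isometry group.

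Next I would handle the form of the metric and the ODEs. Since $g$ and its potential are both $\mathrm{SU}(2)$-invariant and $(M,g)$ is cohomogeneity one, Theorem \refeq{thm_diagonalization} produces a unit-speed geodesic $c:(0,T)\to M_{\mathrm{reg}}$ and a basis $X_1,X_2,X_3\in\mathfrak{su}(2)$ with dual one-forms $\omega_1,\omega_2,\omega_3$ on regular orbits such that $g$ has the diagonal form (\refeq{eq_metricfunctions}) on $M_{\mathrm{reg}}$. The Ricci soliton equation (\refeq{eq_soliton}) then reduces, exactly as in Lemma \refeq{lem_BVP}, to the scalar system (\refeq{eq_ODE1})--(\refeq{eq_ODEi}) for $(f_1,f_2,f_3,u)$ on $(0,T)$. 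Smoothness of $g$ and $u$ across each singular orbit forces $u^{(\text{odd})}=0$ at both $t=0$ and $t=T$, and forces the $f_i$ to satisfy one of the conditions (\refeq{eq_boundaryzero}) or (\refeq{eq_boundarynonzeroSU2}) at each endpoint; the admissible combinations are exactly those enumerated in Table \refeq{tab_compact_smoothness}, one row for each of the four diffeomorphism types (this was already established in the discussion preceding Lemma \refeq{lem_BVP}). After the rescaling $f_i(t)\mapsto f_i(ct)/c$, $u(t)\mapsto u(ct)$, $\lambda\mapsto c^2\lambda$ from \S 3.2, we normalise to $\lambda=1$.

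Finally, for the uniqueness statement I would invoke Lemmas \refeq{lem_fixedSTE} and \refeq{lem_SU2STE}. At the endpoint $t=0$, the smoothness condition (\refeq{eq_boundaryzero}) or (\refeq{eq_boundarynonzeroSU2}) determines the short-time solution uniquely from the triple $(a_1,a_2,a_3)$ or $(\alpha,\beta,\gamma)$ (with $\gamma=0$ forced when $n\notin\{1,2,4\}$, leaving only $(\alpha,\beta)$); equivalently, via $\alpha=-\lambda-3(a_1+a_2+a_3)$, from the triple $(\alpha,a_1-a_2,a_2-a_3)$ in the fixed-point case. The orderings $a_1\geq a_2\geq a_3$ and $\gamma\geq 0$ merely fix the labelling of the one-forms, reflecting the uniqueness up to permutation in Theorem \refeq{thm_diagonalization}. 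Standard ODE uniqueness for (\refeq{eq_xiSU2})--(\refeq{eq_LiSU2}) then propagates the short-time solution to the entire open interval $(0,T)$, so the full metric is determined by the parameters at $t=0$ alone. The parameters at $t=T$ are obtained by the same analysis after applying $t\mapsto T-t$ (and possibly the permutation $(f_1,f_3)\mapsto(f_3,f_1)$ as tabulated in Table \refeq{tab_compact_smoothness}); their role in the statement is to encode which row of Table \refeq{tab_compact_smoothness} applies at the second endpoint, thus pinning down the diffeomorphism type of $M$ together with the geometry there.

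The proof is thus essentially a bookkeeping exercise, and I do not expect any serious obstacle; the one point requiring care is confirming that for the two lifted rows (the $M_n$ cases with $n\in\{1,2\}$, where the endpoint permutation must be tracked) the short-time existence of Lemma \refeq{lem_SU2STE} still applies verbatim at the $t=T$ end after the involution $t\mapsto T-t$, so that the parameters $(\alpha,\beta,\gamma)$ at $T$ are well-defined and continue to parameterize solutions uniquely. This was already implicit in the derivation of Table \refeq{tab_compact_smoothness} in \S 3, and amounts to checking that the transformation $t\mapsto T-t$ commutes with the permutation of indices required in those rows.
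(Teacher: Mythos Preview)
Your proposal is correct and takes essentially the same approach as the paper, which presents this lemma explicitly as a summary (``We summarize in the following lemma'') of Hoelscher's classification, Theorem \ref{thm_diagonalization}, Lemma \ref{lem_BVP}, the end-of-\S 2 argument for $\lambda>0$, and the short-time existence Lemmas \ref{lem_fixedSTE}--\ref{lem_SU2STE} (cf.\ also the remark following Corollary \ref{cor_param}). No separate proof is given in the paper; your bookkeeping is exactly the intended assembly.
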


\begin{remark}
    If the smoothness conditions as $t \to 0$ and $T-t \to 0$ are equivalent, then a metric and its reversed orientation ($t \mapsto T-t$) may be realized by two separate parameters at $t=0$. This is why the Koiso--Cao soliton appears twice in Figure \refeq{fig_1_1}. This is the only known scenario where a smooth metric is realized by more than one set of parameters at a chosen end.
\end{remark}

\subsection{Examples}

The Koiso-Cao soliton is constructed in \S5.3 in the proof of Theorem \refeq{thm_Kahlerclassification}. Its construction does not give explicit functions so we do not include it here. The Page metric, while explicit, is unwieldy, so we do not include it here either. 

The round metric on $\mathbb{S}^4$, the Fubini--Study metric on $\mathbb{CP}^2$ and the direct product of round metrics of the same radii on $\mathbb{S}^2\times \mathbb{S}^2$ can be converted into the form (\refeq{eq_metricfunctions}) under the actions listed in \S 2, which we list explicitly in the examples below. All these examples are Einstein so they satisfy $\alpha = 0$ and $u'=0$ uniformly.

\begin{example}\label{ex_round_metric_1}
    The round metric on $\mathbb{S}^4 = \{A \in M_{3\times 3} (\mathbb{R}):A^T=A,\text{tr}(A)=0,\text{tr}(A^2)=1\}$ is invariant under conjugation by $\mathrm{SO}(3)$, which has the group diagram 
    \[\mathrm{S}(\mathrm{O}(1)\times\mathrm{O}(1)\times\mathrm{O}(1)) \subseteq \mathrm{S}(\mathrm{O}(1)\times\mathrm{O}(2)),\mathrm{S}(\mathrm{O}(2)\times\mathrm{O}(1)) \subseteq \mathrm{SO}(3).\] The round metric for this action is realized by functions 
    \[f_1(t) =  4 \sqrt{3} \sin\left(\frac{t}{\sqrt{3}}\right) \; \text{ and } \; f_{2,3}(t)=6\cos\left(\frac{t}{\sqrt{3}}\right)\pm 2\sqrt{3}\sin\left(\frac{t}{\sqrt{3}}\right)\]
    on $I = (0,{\sqrt{3}\pi}/{3})$ with parameters $\beta = {1}/{9}$ and $\gamma = {2}/{3}$ at both ends.
\end{example}

\begin{example}\label{ex_Fubini-Study_1}
    The Fubini--Study metric on $\mathbb{CP}^2$ is invariant under the $\mathrm{SO}(3)$ subaction of the standard $\mathrm{SU}(3)$ action, which has the group diagram \[\mathbb{Z}_2 \subseteq \mathrm{S}(\mathrm{O}(1) \times \mathrm{O}(2)),\mathrm{SO}(2)\times\mathrm{SO}(1) \subseteq \mathrm{SO}(3).\] 
    The Fubini--Study metric for this action is realized by functions
    \[f_1(t) = 2\sqrt{6}\sin\left(\frac{\sqrt{6}}{3}t\right), \; \; f_2(t)=2\sqrt{6}\cos\left(\frac{\pi}{4}-\frac{t}{\sqrt{6}}\right) \; \text{ and } \; f_3(t) = 2\sqrt{6}\sin\left(\frac{\pi}{4}-\frac{t}{\sqrt{6}}\right)\]
    on $I = (0,{\sqrt{6}\pi}/{4})$ with parameters $\beta={1}/{3}$, $\gamma = {\sqrt{6}}/{3}$ at $t=0$ and $\beta={1}/{12}$, $\gamma = {1}/{4}$ at $t=T={\sqrt{6}\pi}/{4}$.
\end{example}

\begin{example}\label{ex_round_metric_2}
    The round metric on $\mathbb{S}^4 \subseteq \mathbb{H} \times \mathbb{R}$ is invariant under the standard $Sp(1)$ action on the first (quaternionic) component, which is equivalent to the group action with diagram $1 \subseteq \mathrm{SU}(2),\mathrm{SU}(2) \subseteq \mathrm{SU}(2)$. The round metric for this action is realized by functions 
    \[f_1(t)=f_2(t)=f_3(t) = \sqrt{3}\sin\left(\frac{t}{\sqrt{3}}\right)\]
    on $I = (0,\sqrt{3}\pi)$ with parameters $ a_1=a_2=a_3=-{1}/{9}$ at both ends.
\end{example}

\begin{example}\label{ex_Fubini-Study_2}
    The Fubini--Study metric on $\mathbb{CP}^2$ is also invariant under the $\mathrm{SU}(2)$ action $A\cdot[z_0,z_1,z_2] = [z_0,A(z_1,z_2)]$, which has the group diagram $1 \subseteq \mathrm{U}(1),\mathrm{SU}(2)\subseteq \mathrm{SU}(2)$. The Fubini--Study metric for this action is realized by functions
    \[f_1(t) =  \frac{\sqrt{6}}{2}\sin\left(\frac{\sqrt{6}}{3}t\right) \; \text{ and } \; f_2(t)=f_3(t)=\sqrt{6}\sin\left(\frac{t}{\sqrt{6}}\right)\]
    on $I = (0,{\sqrt{6}\pi}/{2})$ with parameters $a_1=-{2}/{9}$, $a_2=a_3=-{1}/{18}$ at $t=0$ and $\beta={1}/{6}$, $\gamma=0$ at $t=T={\sqrt{6}\pi}/{2}$.
\end{example}

\begin{example}\label{ex_sum_of_round}
    The direct product of round metrics with the same radii on $\mathbb{S}^2\times \mathbb{S}^2 \subseteq \mathbb{R}^3 \times \mathbb{R}^3$ is invariant under the $\mathrm{SO}(3)$ action given by $A(x,y) = (Ax,Ay)$, which has the group diagram 
    \begin{multline*}
        1 \subseteq \mathrm{SO}(1) \times \mathrm{SO}(2),\mathrm{SO}(2)\times\mathrm{SO}(1) \subseteq \mathrm{SO}(3) \\
        \cong 1 \subseteq \mathrm{SO}(1)\times\mathrm{SO}(2),\mathrm{SO}(1)\times\mathrm{SO}(2) \subseteq \mathrm{SO}(3) \simeq \mathbb{Z}_2 \subseteq \mathrm{U}(1),\mathrm{U}(1) \subseteq \mathrm{SU}(2),
    \end{multline*}
    where $\cong$ represents a $G$-equivariant diffeomorphism since $N(\{1\}) = \mathrm{SO}(3)$ and $\simeq$ represents equivalence up to finite cover. The round metric is realized with different circle groups as in the first presentation of the group diagram above, with metric functions 
    \[f_1(t) = 2\sqrt{2}\sin\left(\frac{t}{\sqrt{2}}\right), \; \; f_2(t) = 2\sqrt{2}  \; \text{ and } \; f_3(t) = 2\sqrt{2}\cos\left(\frac{t}{\sqrt{2}}\right)\]
    on $I = (0,{\sqrt{2}\pi}/{2})$ with parameters $\beta = {1}/{4} = \gamma$ at both ends.
\end{example}

\subsection{Numerical procedure and results}
There are already analytical results supporting Conjecture \refeq{con_compact_solitons} --- most notably Brendle and Schoen's classification of manifolds with weakly $1/4$-pinched sectional curvature, implying simply-connected compact manifolds of that type are $\mathbb{S}^4$ with the round metric or $\mathbb{CP}^2$ with the Fubini--Study metric \cite{BrendleSchoen2008}.

We will not investigate the $\mathrm{SO}(3)\times \mathrm{SO}(2)$ actions in this work. Such actions always give doubly warped metrics over the standard metrics on $\mathbb{S}^1$ and $\mathbb{S}^2$. The short-term existence results in \S 3 can be repeated for these simpler actions by instead using a transformation with $R_i = (f_i)^{-1}$. The short-term existence results also extend to higher dimensional warped product metrics. Some results in this case are also already known; the B\"ohm--Wilking rounding theorem implies the only Einstein $\mathrm{SO}(3)\times \mathrm{SO}(2)$ invariant metric on $\mathbb{S}^4$ is the round one \cite{BohmWilking2008}. Moreover, Buttsworth has completed a comprehensive numerical analysis of these Ricci solitons in \cite{Buttsworth2022}.

There are few results pertaining to the uniqueness of Ricci solitons, specifically on compact cohomogeneity one $4$-dimensional spaces. Cohomogeneity one actions on simply-connected compact $4$-dimensional smooth manifolds form a small list. We give a brief numerical analysis for each of the remaining actions --- those by $\mathrm{SU}(2)$. 

As we have mentioned many times above, in the compact case, the orbit space of an $\mathrm{SU}(2)$-invariant gradient Ricci soliton with its inherited metric is isometric to $[0,T]$. We now point out that the problem of determining $T$ via initial conditions and the evolution equations is easily solved by the monotone quantity $\xi$ as in the following lemma.

\begin{lemma}
    If $(f_1,f_2,f_3,u)$ is a solution to (\refeq{eq_ODE1})--(\refeq{eq_ODEi}) with $\lambda \geq 0$, then $\xi$ is strictly decreasing. If $(f_1,f_2,f_3,u)$ satisfies any of the smoothness conditions at $t=0$ and $t=T$, then $\lim_{t \to 0^+}\xi = \infty$ and $\lim_{t \to T^-}\xi = -\infty$. 
\end{lemma}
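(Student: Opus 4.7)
The plan is to read monotonicity directly off equation (\ref{eq_xiSU2}) and then analyze the boundary asymptotics using the explicit form of the smoothness conditions collected in \S 3.1 and Table \ref{tab_compact_smoothness}.

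For the monotonicity claim, (\ref{eq_xiSU2}) immediately gives
\[
\xi' \;=\; -L_1^2 - L_2^2 - L_3^2 - \lambda \;\leq\; 0
\]
whenever $\lambda \geq 0$, so $\xi$ is non-increasing. To upgrade to strict decrease I would argue by contradiction: if $\xi'\equiv 0$ on some open $J\subseteq I$, then $\lambda = 0$ and $L_1 = L_2 = L_3 \equiv 0$ on $J$. Plugging into (\ref{eq_RiSU2}) gives $R_i' \equiv 0$, and then (\ref{eq_LiSU2}) reduces to the algebraic constraint $R_i^2 = (R_j - R_k)^2$, so $(\xi, L_i, R_i)$ is constant on $J$. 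Since the right-hand side of (\ref{eq_xiSU2})--(\ref{eq_LiSU2}) is smooth in the dependent variables, the uniqueness part of the standard ODE theorem extends this constant tuple to all of $I$, contradicting the blow-up of at least one $L_i$ imposed by any of the smoothness conditions at the endpoints.

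For the behavior at $t \to 0^+$, I would inspect the smoothness conditions case by case. In (\ref{eq_boundaryzero}) all three $f_i$ satisfy $f_i(t) = t + O(t^3)$, so each $L_i = t^{-1} + O(t)$; in (\ref{eq_boundarynonzeroSU2}) one has $f_1(t) = nt + O(t^3)$ and $f_2, f_3$ smooth and bounded away from zero, so $L_1 = t^{-1} + O(t)$ while $L_2, L_3$ remain bounded; the cases in Table \ref{tab_compact_smoothness} reduce to these two by construction. The condition $u^{(\text{odd})}(0) = 0$ gives $u'(0) = 0$, so $u'$ is bounded near $0$. Summing yields $\xi = L_1 + L_2 + L_3 - u' \to +\infty$. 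For $t \to T^-$, I would apply the reflection $s \mapsto T - s$, under which the smoothness conditions at $T$ become smoothness conditions at $s = 0$ (up to a permutation of the indices $\{1,2,3\}$, as discussed around Table \ref{tab_compact_smoothness}). This reflection sends each $L_i$ to $-L_i$ and $u'$ to $-u'$, hence $\xi \mapsto -\xi$, so $\xi \to -\infty$ at $T$ follows from the $t\to 0^+$ statement applied to the reflected solution.

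The only subtle point is ruling out the case $\xi' \equiv 0$ on an interval when $\lambda = 0$, which is handled cleanly by the uniqueness argument above; everything else is a direct reading of the leading-order asymptotics dictated by (\ref{eq_boundaryzero}) and (\ref{eq_boundarynonzeroSU2}).
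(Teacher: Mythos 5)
Your proposal follows essentially the same route as the paper: monotonicity is read directly off (\refeq{eq_xiSU2}), the divergence of $\xi$ at $t=0$ comes from the leading-order behavior $\xi = \tfrac{1}{t}+\eta$ or $\tfrac{3}{t}+\eta$ near the singular orbit (the paper cites Lemmas \refeq{lem_fixedSTE} and \refeq{lem_SU2STE} for this expansion, whereas you re-derive it directly from (\refeq{eq_boundaryzero}) and (\refeq{eq_boundarynonzeroSU2}) --- equivalent content, and your case-check that $L_2,L_3$ stay bounded under (\refeq{eq_boundarynonzeroSU2}) is correct, since $\phi_3\equiv 0$ unless $n\in\{1,2,4\}$ and $t^{4/n}$ has bounded derivative in those cases), and the statement at $T$ follows by the reflection $t\mapsto T-t$, under which $\xi\mapsto-\xi$, exactly as the paper's ``holds similarly.''

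The one genuine wrinkle is your strictness argument for $\lambda=0$. The first sentence of the lemma is unconditional --- it assumes only that $(f_1,f_2,f_3,u)$ solves (\refeq{eq_ODE1})--(\refeq{eq_ODEi}) --- but your contradiction is obtained from the endpoint blow-up of $L_1$, which is available only under the smoothness conditions assumed in the \emph{second} sentence. As written, your proof of strict decrease therefore covers only the boundary-value setting and would not apply, say, to solutions on $\mathbb{R}\times\mathbb{S}^3$ or to local solutions. The repair is cheap and makes the detour through uniqueness and the endpoints unnecessary: since $R_i = f_i/(f_jf_k)$ with the $f_i$ positive (Theorem \refeq{thm_diagonalization}), the algebraic constraint $R_i^2=(R_j-R_k)^2$ for all $i$ that you correctly extract from $L_i\equiv 0$ in (\refeq{eq_LiSU2}) has \emph{no} solution with all $R_i>0$: ordering $R_1\geq R_2\geq R_3>0$ gives $R_1=|R_2-R_3|=R_2-R_3<R_2\leq R_1$, a contradiction. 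So the equilibrium you would need simply does not exist in the admissible region, $\xi'$ cannot vanish on any subinterval, and strict decrease holds unconditionally --- which is presumably what the paper's ``follows trivially'' is eliding.
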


\begin{proof}
The first claim follows trivially from (\refeq{eq_xiSU2}). The short-time existence in Lemma \refeq{lem_fixedSTE} and \refeq{lem_SU2STE} imply that $\xi(t) = {1}/{t} + \eta$ or $\xi(t) = {3}/{t} + \eta$ for small $t>0$ and some smooth $\eta$ with $\eta(0)=0$. The result at $T$ holds similarly.
\end{proof}

In particular, any smooth metric on a compact space has $T$ characterized by $\lim_{t \to T^-}\xi = -\infty$ since $\xi$ is well defined on $I = (0,T)$. As we can find $T$ from the evolution of the metric functions, we may now conduct a numerical analysis following the work in \cite{DHW2013}. Initially, we approximate $\xi,L_i$ and $R_i$ using a power series approximation in $t$ required for the short-time existence, i.e., up first order unless $n = 1$ in (\refeq{eq_boundarynonzeroSU2}). The first order approximations of $\xi,L_i$ and $R_i$ from the smoothness condition (\refeq{eq_boundaryzero}) are:
\begin{equation}
    \label{eq_fixed_approx}
    \begin{aligned}
        \xi(\delta) &= \frac{3}{\delta}+(a_1+a_2+a_3+\alpha)\delta + O(\delta^2)\\
        L_i(\delta) &= \frac{1}{\delta}+a_i\delta + O(\delta^2)\\
        R_i(\delta) &= \frac{1}{\delta}+\frac{1}{2}(a_i-a_j-a_k)\delta + O(\delta^2)
    \end{aligned}
\end{equation}
for $i,j,k \in \{1,2,3\}$, $i \neq j \neq k \neq i$. We now list the first order approximations of $\xi,L_i$ and $R_i$ from from the smoothness condition (\refeq{eq_boundarynonzeroSU2}) for $n \neq 1$ (recalling solutions with $n \not \in \{1,2,4\}$ are $\mathrm{U}(2)$-invariant) in Table \refeq{tab_compact_smoothness_approx} below.

\renewcommand{\arraystretch}{1.35}
\begin{table}[h!]
\begin{center}
    \begin{tabular}{||c|| c | c | c ||} 
        \hline
        {} & $\mathrm{U}(2)$-invariant &  $n = 2$ & $n = 4$ \\ [0.5ex] 
        \hline\hline
        $\xi(\delta)$ & $\frac{1}{\delta}+(\frac{2n\alpha+8\beta-3n\lambda}{3n})\delta$ & $\frac{1}{\delta}+(\frac{2\alpha+4\beta-3\lambda}{3})\delta$ & $\frac{1}{\delta}+(\frac{4\alpha+4\beta-\gamma^2-6\lambda}{6})\delta$\\
        \hline
        $L_1(\delta)$ & $\frac{1}{\delta}-(\frac{n\alpha+4\beta}{3n})\delta$ & $\frac{1}{\delta}-(\frac{\alpha+2\beta}{3})\delta$ & $\frac{1}{\delta}-(\frac{2\alpha+2\beta+\gamma^2}{6})\delta$\\
        \hline
        $L_2(\delta)$ & $(\frac{4\beta-n\lambda}{2n})\delta$ & $(\frac{2\beta+2\gamma-\lambda}{2})\delta$ & $\frac{\gamma}{2} + (\frac{\beta-\lambda}{2})\delta$\\
        \hline
        $L_3(\delta)$ & $(\frac{4\beta-n\lambda}{2n})\delta$ & $(\frac{2\beta-2\gamma-\lambda}{2})\delta$ & $-\frac{\gamma}{2} + (\frac{\beta-\lambda}{2})\delta$\\
        \hline
        $R_1(\delta)$ & $\beta\delta$  & $\beta\delta$ & $\beta\delta$\\
        \hline
        $R_2(\delta)$ & $\frac{1}{n\delta}+(\frac{n\alpha+4\beta}{6n^2})\delta$ & $\frac{1}{2\delta}+(\frac{\alpha+2\beta+6\gamma}{12})\delta$ & $\frac{1}{4\delta}+\frac{\gamma}{4}+(\frac{2\alpha+2\beta+7\gamma^2}{48})\delta$\\
        \hline
        $R_3(\delta)$ & $\frac{1}{n\delta}+(\frac{n\alpha+4\beta}{6n^2})\delta$ & $\frac{1}{2\delta}+(\frac{\alpha+2\beta-6\gamma}{12})\delta$ & $\frac{1}{4\delta}-\frac{\gamma}{4}+(\frac{2\alpha+2\beta+7\gamma^2}{48})\delta$\\
        \hline
    \end{tabular}
    \caption{First order approximations of functions corresponding to smooth gradient Ricci solitons for $\delta>0$.}\label{tab_compact_smoothness_approx}
\end{center}
\end{table}
\renewcommand{\arraystretch}{1}

\pagebreak

If $n=1$, $\gamma$ only appears in third order and higher terms for $\xi,L_i$ and $R_i$. Thus, we list the third order approximation of $\xi,L_i$ and $R_i$ given the boundary conditions (\refeq{eq_boundarynonzeroSU2}) with $n=1$ as shown below:

\begin{align*}
    \xi(\delta) &= \frac{1}{\delta}+\left(\frac{2\alpha+8\beta-3\lambda}{3}\right)\delta + \left(\frac{-4\alpha^2-32\alpha\beta+3\alpha \lambda-226\beta^2 + 84 \beta \lambda - 9 \lambda^2}{45}\right)\delta^3 \\
    L_1(\delta) &= \frac{1}{\delta}-\left(\frac{\alpha+4\beta}{3}\right)\delta+\left(\frac{14\alpha^2+112\alpha\beta-18\alpha\lambda+476\beta^2-144\beta\lambda+9\lambda^2}{180}\right)\delta^3 \\
    L_2(\delta) &= \left(\frac{4\beta-\lambda}{2}\right)\delta +\left(\frac{48\gamma-8\alpha\beta + 2 \alpha \lambda-92 \beta^2+32\beta \lambda-3\lambda^2}{24}\right)\delta^3\\
    L_3(\delta) &= \left(\frac{4\beta-\lambda}{2}\right)\delta +\left(\frac{-48\gamma-8\alpha\beta + 2 \alpha \lambda-92 \beta^2+32\beta \lambda-3\lambda^2}{24}\right)\delta^3\\
    R_1(\delta) &= \beta\delta+\left(\frac{-\alpha\beta-16\beta^2+3\beta\lambda}{6}\right)\delta^3\\
    R_2(\delta) &= \frac{1}{\delta}+\left(\frac{\alpha+4\beta}{6}\right)\delta+\left(\frac{720\gamma-4\alpha^2-32\alpha\beta+18\alpha \lambda - 316\beta^2+144\beta\lambda-9\lambda^2}{720}\right)\delta^3\\
    R_3(\delta) &= \frac{1}{\delta}+\left(\frac{\alpha+4\beta}{6}\right)\delta+\left(\frac{-720\gamma-4\alpha^2-32\alpha\beta+18\alpha \lambda - 316\beta^2+144\beta\lambda-9\lambda^2}{720}\right)\delta^3,
\end{align*}
where the $O(\delta^4)$ terms are missing. We then check the smoothness of the metric as $t \to T$ by a function $(\delta,C,x) \mapsto SOL(\delta,C,x) \in \mathbb{R}_{\geq 0}$. Explicitly, one first approximates $\xi,L_i$ and $R_i$ at $\delta$ with the truncated series expansions as above for the chosen parameters $x = a$ for (\refeq{eq_boundaryzero}) and $x = (\alpha,\beta,\gamma)$ for (\refeq{eq_boundarynonzeroSU2}). We then solve (\refeq{eq_xiSU2})--(\refeq{eq_LiSU2}) from $\delta$ to $t_0$ where $\xi(t_0) = C$. Finally, we compute
\[SOL_F(\delta,C,x) = u'(t_0)^2+\sum_{i=1}^3 f_i(t_0)^2+(f_i'(t_0)+1)^2\]
if the end smoothness conditions are (\refeq{eq_boundaryzero}) and
\[SOL_n(\delta,C,x) = u'(t_0)^2+f_1(t_0)^2+(f_1'(t_0)+n)^2+(f_2(t_0)-f_3(t_0))^2+f_2'(t_0)^2+f_3'(t_0)^2\]
if the end smoothness conditions are (\refeq{eq_boundarynonzeroSU2}) with $n \neq 4$ and
\[SOL_4(\delta,C,x) = u'(t_0)^2+f_1(t_0)^2+(f_1'(t_0)+4)^2+(f_2(t_0)-f_3(t_0))^2+(f_2'(t_0)+f_3'(t_0))^2\]
otherwise. We also minimize $SOL$ over all permutations of functions $(f_1,f_2,f_3)$ if $n=1$ or $n=2$ as described in \S 3. If $x$ are one-sided parameters for a compact Ricci soliton, then the smoothness conditions at $t=T$ imply that 
\[limSOL(x):=\displaystyle \lim_{C \to -\infty}\lim_{\delta \to 0} SOL(\delta,C,x) = 0.\] 
Moreover, for fixed $C$, $\displaystyle \lim_{\delta \to 0} SOL(\delta,C,\cdot)$ is continuous in a neighborhood of the parameters for a compact Ricci soliton. In particular, smaller $C$ distinguishes compact solitons, while for larger $C$, the better behaved $SOL(\delta,C,\cdot)$ is about a compact Ricci soliton. We found using $\delta = 0.001$ and $C= -20$ gave results with small numerical error, while still being able discern all known Ricci solitons and without losing clarity. 

We numerically compute $SOL(0.001,-20,\cdot)$ and display results in Figure \refeq{fig_compact_solitons} and in the Appendix. Large $SOL(0.001,-20,x)$ suggests that $x$ are not the parameters for a Ricci soliton on a compact cohomogeneity one manifold by the discussion above. In Figure \refeq{fig_compact_solitons}, we compute $SOL(0.001,-20,\cdot)$ over each slice of $\alpha$ and find that the only smooth solitons suggested are the known Einstein metrics, the Koiso--Cao (and reverse Koiso--Cao) metric and a family of singular metrics with a smooth end described in the followed subsection. The slice of the image of $SOL(0.001,-20,\cdot)$ about the known Ricci solitons is found in the Appendix.

\begin{figure}[h!]
    \caption{Numerical simulation of compact solitons}
    \includegraphics[scale = 0.7]{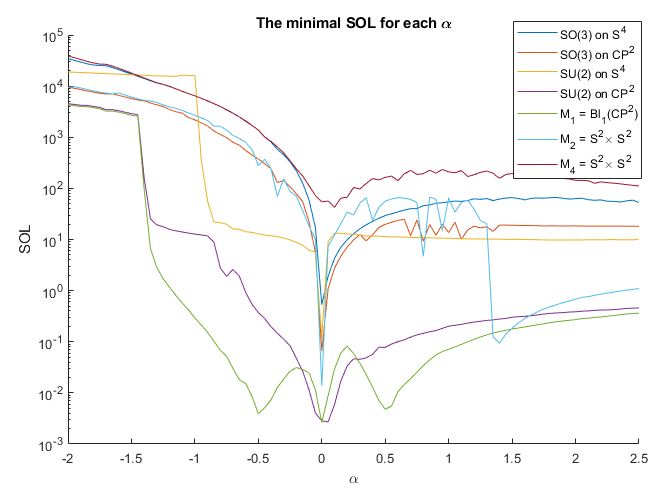}\label{fig_compact_solitons}
\end{figure}

We remark that the \textit{almost} smooth compact solitons detected by the numerics in light blue are on $\mathbb{S}^2\times \mathbb{S}^2$ and is in fact a $\mathrm{U}(2)$-invariant K\"ahler Ricci soliton that has cone angle arbitrarily close to $2\pi$ along an $\mathbb{S}^2$ orbit. In fact we find parameters which give $limSOL(x)$ arbitrarily close to $(n-2)^2$ on $M_n$ for each $n$. They are a few of the many singular gradient Ricci solitons we produce in the following section. These metrics constructed in the following section (applying the construction to non-orbifold metrics) give approximate upper bounds for the $SOL$ function that agree with Figure \refeq{fig_compact_solitons} on:
\begin{itemize}
    \item the $\mathrm{SU}(2)$ action on $\mathbb{S}^4$ (yellow) for $\alpha \in (-1,\infty) \setminus\{0\}$,  
    \item the $\mathrm{SU}(2)$ action on $\mathbb{CP}^2$ (purple) for $\alpha > -\sqrt{2}$,
    \item $M_1$ (green) for $\alpha \in (-\sqrt{2},\infty) \setminus\{0\}$,
    \item $M_2$ (light blue) for $\alpha > \approx 1.344.$
\end{itemize}
Indeed, in Figure \refeq{fig_compact_solitons_orbifolds} we plot $limSOL$ for the parameters which give the orbifold metrics we construct in the following section. We can do similarly for parameters on $M_n$ with $n\geq 3$ and on the $\mathrm{SO}(3)$ action on $\mathbb{S}^4$, but these all have large closing values and only occur for $\alpha > 2.604$ on $M_3$, $\alpha > 3.771$ on $M_4$ and on the $\mathrm{SO}(3)$ action on $\mathbb{S}^4$, $\alpha > 4.872$ on $M_5$ \dots . Figure \refeq{fig_compact_solitons_orbifolds} and Figure \refeq{fig_1_1} describe the only times when parameters give closing value $SOL<10$ in Figure \refeq{fig_compact_solitons}, thereby giving strong evidence for Conjecture \refeq{con_compact_solitons}.

\begin{figure}[h!]
    \caption{Closing conditions for compact K\"ahler solitons with one smooth end}
    \includegraphics[scale = 0.9]{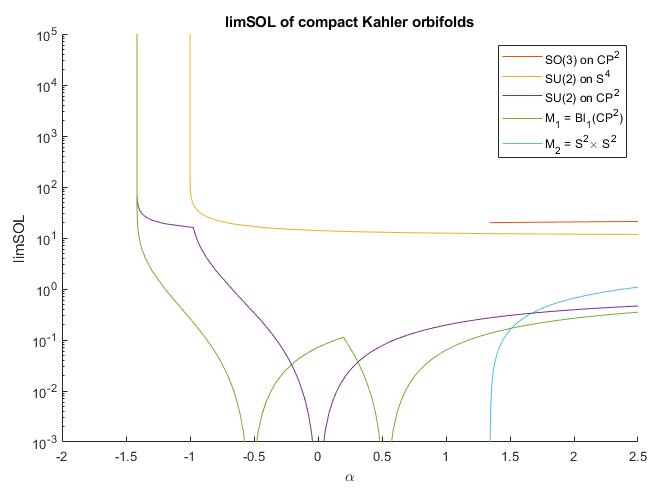}\label{fig_compact_solitons_orbifolds}
\end{figure}

\pagebreak

\subsection{$\mathrm{U}(2)$-invariant shrinking gradient K\"ahler Ricci solitons with orbifold singularities}

Conjecture \refeq{con_compact_solitons} suggests that the Koiso--Cao metric is the only non-Einstein $\mathrm{SU}(2)$-invariant compact gradient Ricci soliton. However, the numerics in Figure \refeq{fig_compact_solitons} detect ``almost smooth'' gradient Ricci solitons on $M_n$ and $\mathbb{CP}^2$ even where smooth Ricci solitons are not known. These are detected about the $\mathrm{U}(2)$-invariant K\"ahler parameters. If any of these metrics are indeed K\"ahler, then they cannot be smooth by the classification of K\"ahler shrinkers in complex dimension 2. Nevertheless, we still construct the metrics detected in Figure \refeq{fig_compact_solitons} --- they contain a family of orbifolds. This construction leads to a classification of $\mathrm{U}(2)$-invariant shrinking gradient K\"ahler Ricci solitons with orbifold singularities whose base space is a smooth manifold --- Theorem \refeq{thm_Kahlerclassification}.

The proof of Theorem \refeq{thm_Kahlerclassification} is only possible because of the simplifications of the ODEs (\refeq{eq_ODE1})--(\refeq{eq_ODEi}) to just one ODE due to K\"ahlerity (see Lemma \refeq{lem_U2Kahler_simpler}). The boundary conditions (\refeq{eq_u_smoothness})--(\refeq{eq_boundarynonzeroSU2}) are adjusted to include orbifolds and applied to the ODE to obtain the classification.

Without loss of generality, let $f_2 = f_3$ by $\mathrm{U}(2)$-invariance and set $f:= f_2$. We also set $\lambda = 1$ as we classify the metrics up to rescaling. 

By Lemmas \refeq{lem_fixedSTE} and \refeq{lem_SU2STE}, $\mathrm{U}(2)$-invariant K\"ahler metrics are described by $f$ since $f = f_2 = f_3$, $f_1 = \pm ff'$ and $u'=Cff'$ for some $C \in \mathbb{R}$. This is also true for $\mathrm{U}(2)$-invariant K\"ahler orbifold metrics. The singular orbits are thus characterized by the vanishing of $f$ or $f'$. Clearly, $f$ is either strictly increasing or strictly decreasing on $I$. 

The ODEs (\refeq{eq_ODE1})--(\refeq{eq_ODEi}) are reduced to:
\begin{align} 
    -\frac{f'''}{f'}-5\frac{f''}{f}+C(ff''+(f')^2) &= 1 \; \text { and } \label{eq_Kahler_order_3}\\
    -2\frac{f''}{f}+C(f')^2-4\frac{(f')^2}{f^2}+\frac{4}{f^2} &= 1. \nonumber
\end{align}
The latter equation can be presented more conveniently as
\begin{equation}
    \label{eq_Kahler_order_2}
    \frac{d}{df}\left(f^4(f')^2e^{-C f^2/2}\right) = -f^5e^{-C f^2/2}\left(-2\frac{f''}{f}+Cf'^2-4\frac{(f')^2}{f^2}\right) = f^3e^{-C f^2/2}(4-f^2). 
\end{equation}
Differentiating equation (\refeq{eq_Kahler_order_2}), one easily sees that (\refeq{eq_Kahler_order_2}) implies (\refeq{eq_Kahler_order_3}). 

We now find the boundary conditions to ensure the metrics close as orbifolds. On a $\mathbb{CP}^1$ singular orbit, i.e., one given by stabilizer groups 
\[H = \mathbb{Z}_n \times \mathrm{U}(1) \subset K = \mathrm{U}(1)\times \mathrm{U}(1) \subset G = \mathrm{U}(2),\] 
the orbifold closing conditions are well known (see \cite{PetersenZhu1995} for example). The orbifold closing conditions with cone angle ${2 \pi}/{q}$ are simply given by replacing $n$ with $n/q$ in (\refeq{eq_boundarynonzeroSU2}):
\[u^{(odd)}(0)=0, \; \; f_1'(0) = n/q, \; \; f_1^{(even)}(0) = 0 \; \; \text{ and } \; \; f_{2}^{(odd)}(0) = f_{3}^{(odd)}(0) = 0.\]

We note that in the $\mathrm{U}(2)$-invariant case, Lemma \refeq{lem_SU2STE} was proven for any real $n>0$. In particular, Lemma \refeq{lem_SU2STE} implies that if the $\mathrm{U}(2)$-invariant orbifold metric is K\"ahler, then $f:=f_2=f_3$, $f_1 = \pm ff'$ and $u' = \mp \alpha qff'/n$. Moreover, if $f$ does not vanish on the singular orbit at $t=0$, then Lemma \refeq{lem_SU2STE} and an appropriately adjusted definition for $\beta$ show that K\"ahler orbifolds also satisfy $f(0)^2 = 4\pm 2n/q$. Since $f(0)>0$, if $f(0)^2<4$, i.e., $f$ is increasing, then $q>n/2$. By a simple induction argument, (\refeq{eq_Kahler_order_3}) implies that if $f(t) \neq 0$ and $f'(t) = 0$ then $f^{(\text{odd})}(t)=0$. Moreover, $f(0)^2 = 4 \pm 2n/q$ and $f'(0) = 0$ gives $f_1'(0) = n/q$ by (\refeq{eq_Kahler_order_2}). 

The only other possibility for an orbifold singular orbit is when $f$ vanishes  (with $f'$ bounded). This occurs when $f \to 0$ and $f_1 \to 0$. By (\refeq{eq_Kahler_order_2}), $\displaystyle \lim_{f \to 0} (f_1)^2=0$ implies $\displaystyle \lim_{f \to 0}(f_1')^2=1=\lim_{f \to 0}(f')^2$. A simple induction argument applied to (\refeq{eq_Kahler_order_2}) shows that if $f(t) = 0$ and $f'(t) = \pm 1$, then $f^{(even)}(t)=0$. In particular, if $f \to 0$ and $ff' \to 0$, then (\refeq{eq_boundaryzero}) is satisfied and a point singular orbit is formed. Moreover, the metric is smooth on a neighborhood of the singular orbit point if and only if the principal orbit is a $3$-sphere. In fact, since a fixed point has isotropy $K=G$, Lemma \refeq{lem_isotropy_sphere} implies the underlying space is not a smooth manifold if the principal orbit $G/H=K/H$ not a sphere. If $f$ and $ff'$ both vanish at $0 = \inf(I)$, Lemma \refeq{lem_fixedSTE} implies $f_1 = ff'$ and $u'=-\alpha ff'$.

\begin{remark}
    Orbifold points can be achieved by an obvious quotient of a space whose only singular orbits are fixed points. Orbifold points can also be achieved when the other singular orbit is $\mathbb{CP}^1$ and $n \neq 1$. We will not explicitly analyze these options, however we will note the non-trivial constructions with orbifold points from the $\mathrm{U}(2)$ actions we must investigate.
\end{remark}

We integrate (\refeq{eq_Kahler_order_2}) and summarize in the following Lemma. Recall that we only consider $\mathrm{U}(2)$ actions that reduce to a cohomogeneity one $\mathrm{SU}(2)$-action.

\begin{lemma}
    \label{lem_U2Kahler_simpler}
    If $g$ is a $\mathrm{U}(2)-$invariant shrinking gradient K\"ahler Ricci soliton with orbifold singularities along singular orbits, then there exists a constant $C \in \mathbb{R}$ and a function $f:I \to \mathbb{R}_{\geq 0}$ such that $g$ is of the form (\refeq{eq_metricfunctions}) where $f_1 = \pm ff'$, $f = f_2 = f_3$ and $u'=Cff'$ for some $C \in \mathbb{R}$. Moreover, the vanishing set of $ff'$ is $\partial I$. If $C = 0$, the metric is Einstein and
    \begin{equation}
        \label{eq_Kahler_Einstein_cons}
        (f')^2 = 1-\frac{1}{6}f^2 + D_0f^{-4},
    \end{equation}
    where $D_0$ is an integration constant. If $C \neq 0$, instead $f$ solves
    \begin{equation}
        \label{eq_Kahler_not_Einstein}
        (f')^2 = \frac{1}{C^3 f^4} (C^2f^2( f^2-4)+4C( f^2-2)+8) + Df^{-4}e^{Cf^2/2},
    \end{equation}
    where $D$ is an integration constant. If $\inf I = 0$, then there exists a $q \in \mathbb{Z}_{>0}$ such that 
    \begin{align}
        f(0) = 0, \; \; f'(0) = 1 \; \; \; &\text{ if } f(0)=0  \label{eq_Kahler_orbifold_closingvan}; \\
        f(0) = \sqrt{4-2n/q}, \; \; f'(0) = 0 \; \; \; &\text{ if } f(0)>0 \text{ and } f \text{ is increasing}; \; \; \text{or} \label{eq_Kahler_orbifold_closinginc} \\
        f(0) = \sqrt{4+2n/q}, \; \; f'(0) = 0 \; \; \; &\text{ if } f(0)>0 \text{ and } f \text{ is decreasing}. \label{eq_Kahler_orbifold_closingdec}
    \end{align}
    Conversely, any solution $f:I \to \mathbb{R}$ to (\refeq{eq_Kahler_Einstein_cons}) or (\refeq{eq_Kahler_not_Einstein}) with $ff' \neq 0$ on $\text{Int}(I)$ induces a $\mathrm{U}(2)-$invariant shrinking gradient K\"ahler Ricci soliton with orbifold singularities along singular orbits if $f$ satisfies one of (\refeq{eq_Kahler_orbifold_closingvan})--(\refeq{eq_Kahler_orbifold_closingdec}) (and another of (\refeq{eq_Kahler_orbifold_closingvan})--(\refeq{eq_Kahler_orbifold_closingdec}) after the transformation $t \mapsto T-t$ if $I = [0,T]$).
\end{lemma}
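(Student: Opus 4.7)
The plan is to derive the structure in three stages: first reduce the full soliton system to a single scalar ODE via the Kähler ansatz, then integrate to obtain an explicit first integral, and finally read off the orbifold boundary conditions from the short-time analysis of the previous section.

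For the structural reduction, I would invoke the K\"ahler statements in Lemmas \refeq{lem_fixedSTE} and \refeq{lem_SU2STE} directly. In the $\mathrm{U}(2)$-invariant case these give $f_2 = f_3 =: f$, $f_1 = \pm f f'$, and $\xi = L_1 + 2 R_1 + \alpha/(n R_2)$ (or the analogous identity at a fixed point), from which $u' = \xi - L_1 - L_2 - L_3$ collapses to a constant multiple of $ff'$, so that $u' = Cff'$ for some $C \in \mathbb{R}$. Exactly the same argument applies in the orbifold setting once $n$ is replaced by $n/q$ in (\refeq{eq_boundarynonzeroSU2}), since the proof of Lemma \refeq{lem_SU2STE} is formal in the positive real parameter $n$. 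The fact that $ff'$ vanishes only on $\partial I$ follows because any interior zero of $f$ or $f'$ would produce a non-principal orbit in the interior of the orbit space, contradicting Theorem \refeq{thm_1}.

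For the integration step, substituting $f_1 = \pm f f'$, $f_2 = f_3 = f$, and $u' = C f f'$ into (\refeq{eq_ODE1})--(\refeq{eq_ODEi}) with $\lambda = 1$ collapses the system to the pair displayed immediately before the lemma: the third-order equation (\refeq{eq_Kahler_order_3}) and the second-order equation that rewrites as (\refeq{eq_Kahler_order_2}). Since differentiating (\refeq{eq_Kahler_order_2}) recovers (\refeq{eq_Kahler_order_3}), it suffices to integrate (\refeq{eq_Kahler_order_2}) once, using $f$ as the independent variable (which is legitimate by the monotonicity of $f$ on $I$). In the Einstein case $C = 0$ this is immediate and yields (\refeq{eq_Kahler_Einstein_cons}); for $C \neq 0$, the antiderivative of $f^3(4-f^2)e^{-Cf^2/2}$ is computed explicitly by integration by parts, producing the closed form (\refeq{eq_Kahler_not_Einstein}) with an integration constant $D$.

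For the closing conditions, I would follow the case analysis sketched just before the lemma. Either $f(0) = 0$ --- in which case $f_1 = \pm f f' \to 0$ forces a fixed-point singular orbit, and induction on (\refeq{eq_Kahler_order_2}) (together with $f_1'(0)^2 = 1$ from evaluating the first integral as $f \to 0$) pins $f'(0) = 1$ and vanishing of all even derivatives of $f$ at $0$, giving (\refeq{eq_Kahler_orbifold_closingvan}) --- or $f(0) > 0$, in which case $f_1(0) = 0$ forces $f'(0) = 0$, the orbifold angle datum at the $\mathbb{CP}^1$ singular orbit gives $f_1'(0) = n/q$, and substituting $f'(0) = 0$ into (\refeq{eq_Kahler_order_2}) quantizes $f(0)^2 = 4 \pm 2n/q$, yielding (\refeq{eq_Kahler_orbifold_closinginc}) or (\refeq{eq_Kahler_orbifold_closingdec}) according to the sign of $f'$ on the adjacent interior. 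The converse is obtained by reversing these steps: any $f$ solving (\refeq{eq_Kahler_Einstein_cons}) or (\refeq{eq_Kahler_not_Einstein}) with $ff' > 0$ on the interior and satisfying one of (\refeq{eq_Kahler_orbifold_closingvan})--(\refeq{eq_Kahler_orbifold_closingdec}) at each end defines, via $f_1 = \pm f f'$, $f_2 = f_3 = f$, $u' = C f f'$, a quadruple whose short-time expansion matches exactly the Kähler initial data of Lemmas \refeq{lem_fixedSTE} and \refeq{lem_SU2STE} (with $n \mapsto n/q$), so that the resulting metric and potential extend smoothly as an orbifold across $\partial I$ and solve (\refeq{eq_ODE1})--(\refeq{eq_ODEi}) on the interior.

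The main obstacle is the $f(0) > 0$ branch of the boundary analysis, where one must extract the precise quantization $f(0)^2 = 4 \pm 2n/q$ from the orbifold closing data; this requires carefully matching the short-time asymptotics provided by Lemma \refeq{lem_SU2STE} (analytically continued in the parameter $n$ to $n/q$) against the conserved quantity (\refeq{eq_Kahler_not_Einstein}), and verifying that the two sign choices in $f_1 = \pm f f'$ correspond bijectively to the increasing and decreasing regimes of $f$.
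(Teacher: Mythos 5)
Your proposal is correct and takes essentially the same approach as the paper, which likewise obtains $f=f_2=f_3$, $f_1=\pm ff'$ and $u'=Cff'$ from the K\"ahler statements of Lemmas \refeq{lem_fixedSTE} and \refeq{lem_SU2STE} (noting these hold for real $n>0$, hence for $n/q$), integrates (\refeq{eq_Kahler_order_2}) in the variable $f$ to get (\refeq{eq_Kahler_Einstein_cons}) and (\refeq{eq_Kahler_not_Einstein}), and reads the orbifold closing conditions off (\refeq{eq_boundaryzero})--(\refeq{eq_boundarynonzeroSU2}) with $n$ replaced by $n/q$. The only cosmetic difference is directional: you derive the quantization $f(0)^2=4\pm 2n/q$ from $f_1'(0)=n/q$ together with (\refeq{eq_Kahler_order_2}), whereas the paper obtains it from the K\"ahler parameter relation $(4-2\epsilon n)\beta=n\lambda$ of Lemma \refeq{lem_SU2STE} (with $\beta$ suitably adjusted) and then verifies $f_1'(0)=n/q$ via (\refeq{eq_Kahler_order_2}) --- both computations are sound and equivalent.
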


\begin{remark}
    Lemma \refeq{lem_U2Kahler_simpler} can be rewritten for steady and expanding gradient K\"ahler Ricci solitons, though (\refeq{eq_Kahler_orbifold_closingdec}) only appears in the shrinking case.
\end{remark}

We can now construct our singular metrics, noting that the constructions in the following three lemmas also produce non-orbifold metrics for positive non-integer $q$.

\begin{lemma}
    \label{lem_kahler_orbifolds}
    For any $q_1, q_2 \in \mathbb{Z}_{>0}$, define $m$ as follows: 
    \begin{itemize}
        \item $m=0$ if $q_1,q_2\leq {n}/{2}$;
        \item $m=2$ if $q_1,q_2 > {n}/{2}$ and $q_1 \neq q_2$; and
        \item $m=1$ otherwise.
    \end{itemize}
    Up to rescaling, there exists exactly $m$ $\mathrm{U}(2)$-invariant gradient K\"ahler Ricci solitons on $M_n$ with orbifold singularities at the singular orbits whose cone angles are ${2 \pi}/{q_1}$ and ${2 \pi }/{q_2}$.
\end{lemma}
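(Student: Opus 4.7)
The plan is to use Lemma~\refeq{lem_U2Kahler_simpler} to reduce the orbifold K\"ahler BVP on $M_n$ to a single scalar ODE for $f$, and then to recast the closing condition as a transcendental equation in the soliton constant $C$ amenable to a clean monotonicity argument.

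First, by Lemma~\refeq{lem_U2Kahler_simpler}, any such soliton corresponds to a smooth positive function $f:[0,T]\to\mathbb{R}_{>0}$ satisfying (\refeq{eq_Kahler_order_2}), with $ff'$ vanishing only at the endpoints of $[0,T]$. Since $f>0$ everywhere, $f'$ has constant sign on $(0,T)$, and applying the reflection $t\mapsto T-t$ if needed I may assume $f$ is strictly increasing. The boundary conditions (\refeq{eq_Kahler_orbifold_closinginc}) and (\refeq{eq_Kahler_orbifold_closingdec}) then force
\[
f(0)=\sqrt{4-2n/q_a},\qquad f(T)=\sqrt{4+2n/q_b},
\]
where $\{q_a,q_b\}=\{q_1,q_2\}$ and the starting index must satisfy $q_a>n/2$ (so that $f(0)\in\mathbb{R}$). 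Enumerating admissible labeled pairs $(q_a,q_b)$ immediately yields the trichotomy of the lemma: no labeling when $q_1,q_2\le n/2$; a unique labeling (modulo the reflection) when exactly one of $q_1,q_2$ exceeds $n/2$ or when $q_1=q_2>n/2$; and two genuinely distinct labelings when $q_1,q_2>n/2$ with $q_1\ne q_2$. It therefore suffices to prove that each admissible labeled pair is realised by exactly one soliton.

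The key reformulation is the first integral $V(f):=f^4(f')^2e^{-Cf^2/2}$ from (\refeq{eq_Kahler_order_2}), satisfying $V'(f)=f^3(4-f^2)e^{-Cf^2/2}$. Imposing $V(f_0)=0$ fixes the integration constant $D$, and the closing condition $V(f_T)=0$ reduces the BVP to the single scalar equation
\begin{equation*}
I(C):=\int_{f_0}^{f_T} f^3(4-f^2)\,e^{-Cf^2/2}\,df=0.
\end{equation*}
The substitution $y=\tfrac12 f^2-2$, after factoring out $e^{-2C}$, rewrites this as $K(C)=0$ where
\begin{equation*}
K(C):=\int_{-u_1}^{u_2} y(y+2)\,e^{-Cy}\,dy,\qquad u_1=\tfrac{n}{q_a}\in(0,2),\quad u_2=\tfrac{n}{q_b}>0.
\end{equation*}
Differentiating under the integral gives
\[
K'(C)=-\int_{-u_1}^{u_2} y^2(y+2)\,e^{-Cy}\,dy.
\]
Because $u_1<2$, the integrand $y^2(y+2)e^{-Cy}$ is strictly nonnegative and not identically zero on $(-u_1,u_2)\subseteq(-2,\infty)$, so $K'<0$ everywhere. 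Elementary tail estimates give $K(C)\to+\infty$ as $C\to-\infty$ (dominated by the positive part $y\in(0,u_2)$) and $K(C)\to-\infty$ as $C\to+\infty$ (dominated by the negative part $y\in(-u_1,0)$). Hence $K$ has a unique zero $C^*\in\mathbb{R}$: $C^*=0$ recovers the Einstein subfamily (\refeq{eq_Kahler_Einstein_cons}), while $C^*\ne0$ produces a genuine K\"ahler shrinker. The trajectory is automatically admissible, because $V$ integrates a strictly positive density on $(f_0,2)$ and a strictly negative one on $(2,f_T)$, so $V>0$ on the open interval, $f'$ never vanishes on $(0,T)$, and $f$ is monotone as assumed.

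The principal obstacle is finding the right reformulation $K(C)=0$. The original closing identity has a spurious high-order zero at $C=0$ (reflecting the non-Einstein ODE's degeneration into the Einstein one) and no manifest monotonicity; only after centring the integration at the interior root $f=2$ of the weight $4-f^2$ does the quadratic factor $y^2(y+2)$ in $K'$ become obviously nonnegative on the relevant interval, collapsing uniqueness to an elementary one-variable sign argument.
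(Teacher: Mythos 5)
Your proof is correct, and its core is genuinely different from the paper's. The reduction is the same (Lemma \ref{lem_U2Kahler_simpler}, normalizing $f$ increasing, enumerating labeled pairs $(q_a,q_b)$ with $q_a>n/2$, so the trichotomy for $m$ reduces to ``exactly one soliton per admissible labeled pair''), but where the paper then splits into four steps --- an explicit Petersen--Zhu-style factorization for the Einstein case, an intermediate value argument on the transcendental function $h_1$, and two separate Taylor-coefficient sign-change (Descartes-type) uniqueness arguments following Cao, with the triple zero of $h_1$ at the origin handled via $h_1'''(0)$ --- you integrate the first integral in \eqref{eq_Kahler_order_2} directly between the prescribed endpoints and center the substitution at the root $f=2$ of the weight $4-f^2$, obtaining $K(C)=\int_{-u_1}^{u_2}y(y+2)e^{-Cy}\,dy$. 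Your reformulation is exactly equivalent to the paper's: one checks the identity
\begin{equation*}
    h_1(x) = x^3 e^{k_1 x}\,K(-x),
\end{equation*}
so the factor $x^3$ is precisely the ``spurious high-order zero'' you identify, and $K(0)=0$ recovers the Einstein relation \eqref{eq_Kahler_Einstein_rationals} (equivalently Remark \ref{rem_KE}). What your route buys is a uniform treatment: $K'(C)=-\int y^2(y+2)e^{-Cy}\,dy<0$ (using $u_1<2$) together with the tail asymptotics gives a unique root $C^*$ in one stroke, covering Einstein ($C^*=0$) and non-Einstein cases simultaneously and replacing all of Steps 1--4 of the paper; your admissibility observation ($V>0$ strictly between the endpoints, so $f'$ cannot vanish in the interior) is also the right one, and finiteness of $T$ with smooth orbifold closing follows from the simple zeros of $V$ at the endpoints plus the converse direction of Lemma \ref{lem_U2Kahler_simpler}. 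Two trivial points of polish: the justification ``so that $f(0)\in\mathbb{R}$'' for $q_a>n/2$ is slightly off, since $q_a=n/2$ still gives real $f(0)=0$; what rules it out is your standing requirement $f>0$ (a $\mathbb{CP}^1$ rather than point orbit). And for $m=2$ you should say explicitly why the two labelings give non-isometric metrics --- immediate, since the orbit with cone angle $2\pi/q_1$ has $f^2=4-2n/q_1$ in one solution and $f^2=4+2n/q_1$ in the other --- though the paper is equally brief on this point.
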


\begin{remark}
    \label{rem_KE}
    If $\max(q_1,q_2) > {n}/{2}$, then one of the metrics in Lemma \refeq{lem_kahler_orbifolds} is Einstein if and only if $6q_i^2(2q_j+n)-6q_i(q_i+q_j)(q_j+n)+2n(q_i+q_j)^2=0$ for $i,j \in \{1,2\}$, $i \neq j$. For each pair $q_1,q_2 \in \mathbb{Z}_{>0}$, there is at most one Einstein metric. There are infinitely Einstein orbifold metrics included in Lemma \refeq{lem_kahler_orbifolds}.
\end{remark}

\begin{remark}
    The Koiso--Cao soliton is the only smooth metric included in Lemma \refeq{lem_kahler_orbifolds}. 
\end{remark}

\begin{proof}[Proof of Lemma \refeq{lem_kahler_orbifolds}]
    Fix $q_1,q_2 \in \mathbb{Z}_{>0}$ and define $k_i = {n}/{q_i}$. Since $M_n$ is compact, we can choose the parameterization such that $f_1 = ff'$. We also choose the closing conditions to have cone angle ${2 \pi}/{q_1}$ at $t=0$. By Lemma \refeq{lem_SU2STE}, $C=-\alpha/k_1$. It suffices to prove that there exists a unique solution for $f$ in Lemma \refeq{lem_U2Kahler_simpler} with $f' \neq 0$ on $\text{Int}(I) = (0,T)$ that satisfies the boundary conditions 
    \[L:=f(0)^2 = 4-2k_1, \; \; f(T)^2 = 4+2k_2 \; \text{ and } \; f'(0)=0=f'(T)\]
    for each $k_1,k_2>0$, such that $k_1<2$. If $k_1,k_2 < 2$ then the second solution is given by exchanging $k_1$ and $k_2$ which produces distinct metric functions unless $k_1=k_2$.

    \textbf{Step 1: Existence of the Einstein metrics.}\\
    The metric is Einstein if and only if $\alpha = 0$. All these metrics have been constructed in \cite{PetersenZhu1995} already. We closely follow their procedure below. The boundary condition at $0$ implies $D_0 = -(4-2k_1)^2(1+k_1)/3$. We then factorize to produce:
    \begin{equation}
        \label{eq_Kahler_Einstein}
        (f')^2 = \frac{1}{6f^4}(L-f^2)(f^4-2(1+k_1)f^2-4(2-k_1)(1+k_1)).
    \end{equation}
    We note that $f(t)^{-4}/6 \neq 0 \neq L-f(t)^2$ while $f'(t) > 0$. A solution to (\refeq{eq_Kahler_Einstein}) has $f'(t) > 0$ for $t > 0$ until $f(t)^2 = (1+k_1)+\sqrt{3(1+k_1)}$. In particular, the solution always exists while $f(t)^2 \leq 4$. The boundary condition at $T$ is therefore satisfied exactly when choosing 
    \begin{equation}
        \label{eq_Kahler_Einstein_rationals}
        k_2 = \frac{1}{2}(-(3-k_1)+\sqrt{3(1+k_1)(3-k_1)}).
    \end{equation}
    If $0<k_1<2$, then (\refeq{eq_Kahler_Einstein_rationals}) implies $0<k_2<k_1$. One easily finds an infinite family of solutions to (\refeq{eq_Kahler_Einstein_rationals}) with $0<k_1<2$. This gives us Remark \refeq{rem_KE} and existence of K\"ahler Einstein metrics.

    \textbf{Step 2: Existence of non-Einstein Ricci solitons.}\\
    When $\alpha \neq 0$, $C \neq 0$, and the boundary condition at $0$ is
    \[C^2L(L-4)+4C(L-2)+8 = -C^3 De^{\frac{CL}{2}}.\]
    Applying the initial condition, we can rewrite (\refeq{eq_Kahler_not_Einstein}) as
    \[(f')^2 = \frac{e^{C(f^2-L)/2}}{C^3 f^4} \bar{H_1}(f^2)\]
    where
    \[\bar{H_1}(F) = e^{-C(F-L)/2}(C^2F(F-4)+4C(F-2)+8) - (C^2L(L-4)+4C(L-2)+8).\]
    Clearly, for all $f^2>0$, $f'=0$ if and only if $\bar{H_1}(f^2)=0$. Since 
    \[\bar{H_1}'(F) = -C^3e^{-C(F-L)/2}F(F-4)/2 \text{ and } \bar{H_1}(L)=0,\]
    $\bar{H_1}'$ is non-zero and does not change sign on $F \in (0,4)$. Thus, $\bar{H_1}(F)$ is non-zero on $(L,4]$. The expression for $\bar{H_1}'(F)$ also implies there is at most one root of $\bar{H_1}$ for $F>4$. Therefore, the solution for $f$ satisfying the boundary conditions in (\refeq{eq_Kahler_not_Einstein}) exists with $f' \neq 0$ on $(0,T)$ if and only if we can apply the boundary conditions.
    
    We can also note that, by the intermediate value theorem, there exists a root bigger than $4$ for $\bar{H_1}$ if and only if the sign of $\bar{H_1}'(L)$ is opposite that of $\displaystyle \lim_{F \to \infty} \bar{H_1}(F)$. This occurs exactly when 
    \[C < -\frac{2(L-2) - 2 \sqrt{-L^2+4L+4}}{L(4-L)}\]  
    where $\displaystyle \lim_{F \to \infty} \bar{H_1}(F) = 0$ at equality. We now impose the the orbifold closing conditions at $T$:
    \[4C^2k_2(2+k_2)+8C(1+k_2)+8 = -C^3De^{C(2+k_1)}.\]
    Equivalently, $\alpha/k_1 = -C$ is a root of the function
    \[h_1(x) = e^{(k_1+k_2)x}\left(k_2(2+k_2)x^2-2(1+k_2)x+2\right) + k_1(2-k_1)x^2+2(1-k_1)x-2.\]
    Therefore, there exists a non-Einstein Ricci soliton on $M_n$, with orbifold closing conditions corresponding to $k_1$ and $k_2$, for each non-zero root of $h_1$. We now follow the construction in \cite{Cao1996} for the remainder of this and the next step. It is easily seen that $h_1(0)=0=h_1'(0)=h_1''(0)$ and 
    \[h_1'''(0) = (k_1+k_2)\big(6k_2(2+k_2)-6(k_1+k_2)(1+k_2)+2(k_1+k_2)^2\big).\]
    Moreover, $h_1'''(0)=0$ exactly when the metric is Einstein. The limits $\displaystyle \lim_{x \to \pm\infty}h_1(x) = \infty$, imply there exists a negative root for $h_1$ if $h_1'''(0)>0$ and a positive root for $h_1$ if $h_1'''(0)<0$ by the intermediate value theorem.

    \textbf{Step 3: Uniqueness of Ricci solitons with $\alpha<0$.}\\
    If $h_1'''(0)\geq 0$, then, for all $x>0$, $h_1'''(x)>0$ and hence $h_1(x)>0$. Indeed,
    \[h_1'''(x)e^{-(k_1+k_2)x} = k_2(2+k_2)(k_1+k_2)^3x^2+(k_1+k_2)\big(h_1'''(0)+2(k_1+k_2)^2(k_2+2-k_1)\big)x + h_1'''(0).\]
    Now consider the Taylor series for $\tilde{h_1}$:
    \[\tilde{h_1}(x) := e^{(k_1+k_2)x}h_1(-x) = -\sum_{l=3}^\infty \frac{(k_1+k_2)^{l-2}}{l!}\kappa_1(l)x^l,\]
    where
    \[\kappa_1(l) = -k_1(2-k_1)l^2+(2(1-k_1)(k_1+k_2)+k_1(2-k_1))l+2(k_1+k_2)^2.\]
    $\kappa_1$ changes sign at most once for $l>0$, so as in Step 4. pp. 13--14 of \cite{Cao1996}, $\tilde{h_1}$ has at most one root in $(0,\infty)$ and $h_1$ has at most one root in $(-\infty,0)$

    \textbf{Step 4: Uniqueness of Ricci solitons with $\alpha\geq 0$.}\\
    We first show that if $h_1'''(0) \leq 0$, then $h_1(x)>0$ for all $x<0$. Note that 
    \[h_1''(x) = 2k_1(2-k_1)(1-e^{(k_1+k_2)x})+k_2(k_1+k_2)^2(2+k_2)xe^{(k_1+k_2)x}(x-A_0)\]
    where
    \[A_0 = \frac{2(k_1+k_2)(1+k_2)-4k_2(2+k_2)}{k_2(k_1+k_2)(2+k_2)}.\]
    In particular, $h_1''(x)>0$ for all $x<\min(0,A_0)$. Thus, it suffices to show $h_1'''(x)<0$ for all $x \in (A_0,0)$ whenever $A_0<0$. In fact, it suffices to show that $h_1'''(A_0)< 0$ as the coefficients of $h_1'''(x)e^{-(k_1+k_2)x}$ imply that $h_1'''(x)$ changes sign at most once on $(-\infty,0)$. We compute that, if $A_0 <0$, then
    \[h_1'''(A_0) = (k_1+k_2)e^{(k_1+k_2)A_0}(A_0-2k_1(2-k_1))<0.\]
    Therefore, if $h_1'''(0)=0$, then $0$ is the only root and the Einstein solution is unique. As in the previous step, all that remains to consider is uniqueness of a positive root of $h_1$. We have the Taylor series 
    \[h_1(x) = \sum_{l=3}^\infty \frac{(k_1+k_2)^{l-2}}{l!}\kappa_2(l)x^l\]
    for $h_1$, where
    \[\kappa_2(l) = k_2(2+k_2)l^2-(2(1+k_1)(k_1+k_2)+k_1(2+k_1))l+2(k_1+k_2)^2.\]
    Since $\kappa_2$ changes sign at most once for $l>0$, if $h_1'''(0)<0$, then there is a unique positive root of $h_1$.
\end{proof}

We now construct some complete non-compact solitons. If there is a singular orbit, we may assume $I = [0,\infty)$. Since $f$ is monotone, if $f$ is bounded, then $f \to K$ as $t \to \infty$ for some $K\geq 0$. Moreover, it must be that $f',f'' \to 0$ as $f \to K$. However, the ODEs (\refeq{eq_Kahler_Einstein_cons}) and (\refeq{eq_Kahler_not_Einstein}) imply that if $f \to 0$, then $f' \not \to 0$. Moreover, (\refeq{eq_Kahler_Einstein_cons}) and (\refeq{eq_Kahler_not_Einstein}) also imply that if $f \to K>0$ and $f' \to 0$, then $f'' \to (K^2-4)/K$. Finally, if $f' = 0$ at $f^2 = 4$, then the integration constants of (\refeq{eq_Kahler_Einstein_cons}) and (\refeq{eq_Kahler_not_Einstein}) must be either $D_0 = -16/3$ or $D = -8e^{-2C}(C+1)/C^3$ and then the ODEs become $(f')^2 \leq 0$ with equality only at $f^2=4$ --- there are also no metrics in this case. Therefore, if the metric is on a non-compact space, $f$ is not bounded, $f$ is increasing and $f' > 0$ for all $f > f(0)$.

\begin{lemma}
    \label{lem_kahler_orbifolds_noncompact}
    For every integer $q>{n}/{2}$, there exists a unique, up to rescaling, complete $\mathrm{U}(2)$-invariant gradient K\"ahler Ricci soliton on $\mathcal{O}(-n)$ with cone angle ${2 \pi}/{q}$ along orbifold singularities on the singular orbit.
\end{lemma}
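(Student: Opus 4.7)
By Lemma \refeq{lem_U2Kahler_simpler} it suffices to choose $C\in\mathbb{R}$ and a solution $f$ of (\refeq{eq_Kahler_not_Einstein}) (or of (\refeq{eq_Kahler_Einstein_cons}) when $C=0$) with the prescribed orbifold closing. Since $\mathcal{O}(-n)$ is non-compact, the discussion preceding the lemma forces $f$ strictly increasing and unbounded; combined with the $\mathbb{CP}^1$ closing condition (\refeq{eq_Kahler_orbifold_closinginc}) this pins $L := f(0)^{2} = 4 - 2n/q$ and $f'(0)=0$, where the hypothesis $q>n/2$ gives $L\in(0,4)$. The Einstein case $C=0$ is immediately incompatible: with $f'(0)=0$ the integration constant in (\refeq{eq_Kahler_Einstein_cons}) is forced to equal $D_{0}=L^{2}(L/6-1)$, and the dominant $-f^{2}/6$ term drives $(f')^{2}\to-\infty$ as $f\to\infty$. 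Hence $C\ne 0$ throughout.

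\textbf{Reduction to a one-parameter family.} Eliminating $D$ from (\refeq{eq_Kahler_not_Einstein}) using $f'(0)=0$, the ODE takes exactly the form used in the proof of Lemma \refeq{lem_kahler_orbifolds}:
\[
C^{3} f^{4} e^{-C(f^{2}-L)/2}(f')^{2} \;=\; \bar{H}(f^{2}),\qquad \bar{H}(F):=e^{-C(F-L)/2}\phi(F)-\phi(L),
\]
with $\phi(F)=C^{2}F(F-4)+4C(F-2)+8$. The crucial derivative identity, already recorded there, is $\bar{H}'(F)=-\tfrac{C^{3}}{2}\,e^{-C(F-L)/2}F(F-4)$. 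The task is to select the unique $C\ne 0$ for which the resulting $f$ extends to all $f^{2}>L$ and produces infinite $t$-length.

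\textbf{Sign analysis.} For $C<0$, the derivative formula shows $\bar{H}$ strictly decreasing from $0$ on $(L,4)$ and then strictly increasing to $+\infty$ on $(4,\infty)$; since $(f')^{2}\ge 0$ requires $\bar{H}/C^{3}\ge 0$, i.e.\ $\bar{H}\le 0$, the solution stops at the unique zero of $\bar{H}$ in $(4,\infty)$, yielding a compact (not $\mathcal{O}(-n)$) metric. For $C>0$ the picture reverses: $\bar{H}$ climbs above $0$ on $(L,4]$ and then decays toward $-\phi(L)$ as $F\to\infty$; avoiding a second zero forces $\phi(L)\le 0$. Viewed as a quadratic in $C$, $\phi(L)=C^{2}L(L-4)+4C(L-2)+8$ opens downward with $\phi(L)|_{C=0}=8>0$, so this is equivalent to $C\ge C_{+}$, where $C_{+}>0$ is the unique positive root. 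For $C>C_{+}$ one has $\bar{H}(F)\to-\phi(L)>0$, so $(f')^{2}\sim\frac{-\phi(L)}{C^{3}f^{4}}e^{C(f^{2}-L)/2}$; then $dt/df=1/f'$ decays like $f^{2}e^{-Cf^{2}/4}$, hence is integrable and the metric closes at finite $t$, excluding completeness on $\mathcal{O}(-n)$. Thus $C=C_{+}$ is the only candidate.

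\textbf{The critical value $C=C_{+}$.} At $C_{+}$ we have $\phi(L)=0$ and the ODE collapses to $(f')^{2}=\phi(f^{2})/(C_{+}^{3}f^{4})$. The main obstacle is to verify that $\phi(F)>0$ for all $F>L$. Since $\phi(\cdot)$ is an upward-opening quadratic with $\phi(L)=0$, this is equivalent to $\phi'(L)>0$, i.e.\ $C_{+}(2-L)<2$. The case $L\ge 2$ is immediate. For $L<2$, substituting $u=2-L\in(0,2)$ and inserting the quadratic-formula expression for $C_{+}$ reduces the inequality to $u\sqrt{8-u^{2}}<4$, equivalently $(u^{2}-4)^{2}>0$, which holds. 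Hence $\phi>0$ on $(L,\infty)$ and a unique $f\colon[0,\infty)\to[\sqrt L,\infty)$ exists. As $f\to\infty$, $\phi(f^{2})\sim C_{+}^{2}f^{4}$ gives $(f')^{2}\to 1/C_{+}$, so $f\sim t/\sqrt{C_{+}}$ and the $t$-length is infinite. Lemma \refeq{lem_U2Kahler_simpler} then assembles this $f$ into a complete $\mathrm{U}(2)$-invariant gradient K\"ahler Ricci soliton on $\mathcal{O}(-n)$ with cone angle $2\pi/q$, unique up to rescaling.
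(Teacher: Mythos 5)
Your proof is correct and follows essentially the same route as the paper: reduce via Lemma \refeq{lem_U2Kahler_simpler} to the ODE (\refeq{eq_Kahler_not_Einstein}) with closing condition (\refeq{eq_Kahler_orbifold_closinginc}), analyze the auxiliary function $\bar H_1$ from the proof of Lemma \refeq{lem_kahler_orbifolds}, and show that strictly above the critical value of $C$ the metric has finite length (incomplete), while exactly at the critical value $(f')^2\to 1/C$, giving the unique asymptotically conical complete soliton. You add details the paper only cites implicitly from Lemma \refeq{lem_kahler_orbifolds} — the exclusion of $C\le 0$ and, usefully, the positivity of $\phi$ on $(L,\infty)$ at the critical value via the reduction to $u\sqrt{8-u^2}<4$, i.e.\ $(u^2-4)^2>0$, which the paper glosses over. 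One substantive point in your favor: your critical value, the positive root $C_{+}$ of $\phi(L)=C^2L(L-4)+4C(L-2)+8=0$, is correct, whereas the closed-form threshold printed in the paper,
\[
-\frac{(1-k)-\sqrt{1+2k-k^{2}}}{k(2-k)} \qquad (k=n/q,\ L=4-2k),
\]
carries a sign slip: it equals minus the \emph{negative} root of $\phi(L)$ and coincides with $C_{+}=\frac{(1-k)+\sqrt{1+2k-k^{2}}}{k(2-k)}$ only when $k=1$ (the smooth FIK case). For instance at $L=3$ ($k=1/2$) the roots of $-3C^{2}+4C+8$ are $\approx 2.43$ and $\approx -1.10$; the paper's formula gives $\approx 1.10$, at which $\phi(L)>0$, so $\bar H_1$ still has a zero beyond $F=4$ and the solution closes compactly — the completeness dichotomy the paper asserts at its stated threshold actually occurs at your $C_{+}$. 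The structure of the paper's argument (and the lemma itself) is unaffected, but your version fixes the constant.
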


\begin{remark}
    There are no Einstein metrics included in Lemma \refeq{lem_kahler_orbifolds_noncompact}. 
\end{remark}

\begin{remark}
    The FIK shrinker on $\mathcal{O}(-1)$ is the only smooth metric included in Lemma \refeq{lem_kahler_orbifolds_noncompact}. 
\end{remark}

\begin{proof}[Proof of Lemma \refeq{lem_kahler_orbifolds_noncompact}]
    Let $k = {n}/{q}$. The initial conditions for a $\mathrm{U}(2)$-invariant gradient K\"ahler Ricci soliton orbifold on $\mathcal{O}(-n)$ are (\refeq{eq_Kahler_orbifold_closinginc}). Moreover, as in the proof of Lemma \refeq{lem_kahler_orbifolds}, this metric is over $\mathcal{O}(-n)$ if and only if 
    \[C = -\frac{\alpha}{k} \geq -\frac{(1-k) - \sqrt{1+2k-k^2}}{k(2-k)}>0.\]
    If this inequality is strict, then the limit $\displaystyle \lim_{f \to \infty} f^4e^{-Cf^2/2}(f')^2$ is finite and positive, so the metric is incomplete since $C>0$. Instead, at equality,
    \[(f')^2 = \frac{C^2f^2(f^2-4)+4C(f^2-2)+8}{C^3f^4} \; \text{ and } \; \lim_{f \to \infty} (f')^2 = \frac{1}{C},\]
    so the metric is asymptotically conical and is complete.
\end{proof}

We now construct the compact K\"ahler gradient Ricci soliton orbifolds with smooth fixed points. Clearly, the two-fixed point action does not give any $\mathrm{U}(2)$-invariant K\"ahler gradient Ricci solitons since $f$ cannot vanish twice as it is strictly increasing.

\begin{lemma}
    \label{lem_kahler_orbifolds_CP1}
    For every $q \in \mathbb{Z}_{>0}$, there exists a unique, up to rescaling, complete $\mathrm{U}(2)$-invariant gradient K\"ahler Ricci soliton on $\mathbb{CP}^2$ with cone angle ${2 \pi}/{q}$ along the orbifold singularities on the $\mathbb{CP}^1$ orbit.
\end{lemma}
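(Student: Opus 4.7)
The plan is to mirror the argument of Lemma~\refeq{lem_kahler_orbifolds} for the case where one singular orbit is a smooth fixed point and the other is a $\mathbb{CP}^1$ orbifold with cone angle $2\pi/q$. Setting $k:=1/q$ (which plays the role of $n/q$ in Lemma~\refeq{lem_U2Kahler_simpler}, with $n=1$ by Table~\refeq{tab_compact_smoothness}), the task reduces—after rescaling to $\lambda=1$ and applying $t\mapsto T-t$ if needed—to finding $T>0$, $C\in\mathbb{R}$, and a strictly increasing $f:[0,T]\to[0,\infty)$ solving (\refeq{eq_Kahler_Einstein_cons}) or (\refeq{eq_Kahler_not_Einstein}) with $f(0)=0$, $f'(0)=1$, $f(T)^2=4+2k$, and $f'(T)=0$.

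In the Einstein case $C=0$, the initial condition forces $D_0=0$ in (\refeq{eq_Kahler_Einstein_cons}), yielding $(f')^2=1-f^2/6$; the first zero of $f'$ occurs at $f^2=6$, matching $4+2k$ exactly when $q=1$, which recovers the Fubini--Study metric of Example~\refeq{ex_Fubini-Study_2}. For $C\neq 0$, expanding (\refeq{eq_Kahler_not_Einstein}) about $f=0$ and matching the $f^{-4}$, $f^{-2}$, and $f^0$ coefficients against $(f')^2=1+O(f^2)$ uniquely determines $D=8(C-1)/C^3$. Substituting this value and imposing $f'(T)=0$ at $f(T)^2=4+2k$ reduces the problem to finding a non-zero root of
\[h(C):=\tfrac12 C^2 k(2+k)+C(1+k)+1+(C-1)e^{C(2+k)}.\]

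The sign analysis of $h$ is the crux of the argument. Iterated differentiation yields $h(0)=h'(0)=h''(0)=0$, $h'''(0)=(2+k)^2(1-k)$, and the compact expression
\[h'''(C)=(2+k)^2 e^{C(2+k)}\bigl[(2+k)C+1-k\bigr],\]
whose only zero is $C_0:=(k-1)/(2+k)$. For $q>1$ (so $k<1$), $h''$ decreases then increases about $C_0<0$, with minimum $h''(C_0)=(2+k)(k-e^{k-1})<0$ (using the strict convex inequality $e^{k-1}>k$ for $k<1$), boundary value $h''(0)=0$, and limit $k(2+k)>0$ as $C\to-\infty$; therefore $h''$ has exactly one zero $C_1$ on $(-\infty,0)$. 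Descending the monotonicity hierarchy to $h'$ (using $h'(0)=0$ and $h'(-\infty)=-\infty$) forces a unique zero $C^*<C_1$ of $h'$; the resulting strict minimum $h(C^*)<h(0)=0$ combined with $h(C)\to+\infty$ as $C\to-\infty$ produces exactly one negative root of $h$. For $C>0$ (any $q$), the Taylor series
\[h(C)=\sum_{l=3}^\infty \frac{(2+k)^{l-1}(l-(2+k))}{l!}\,C^l\]
has coefficients non-negative for $k\leq 1$ and strictly positive for $l\geq 4$, so $h>0$ on $(0,\infty)$. For $q=1$ the same formula for $h'''$ yields $h''\geq 0$ globally with equality only at $C=0$, so the Einstein solution is the unique metric.

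To close the argument I would verify that the $f$ realizing the unique $C$ gives a complete metric; following Step~2 of Lemma~\refeq{lem_kahler_orbifolds}, let $N(F)$ be the bracketed combination making $(f')^2=N(f^2)/(C^3 f^4)$, and set $G(F):=e^{-CF/2}N(F)$. A direct computation gives $G'(F)=-\tfrac12 C^3 e^{-CF/2} F(F-4)$, so with $C<0$ the function $G$ is strictly decreasing on $(0,4)$ and strictly increasing on $(4,\infty)$. Since $G(0)=G(4+2k)=0$, we have $G<0$ and hence $N/C^3>0$ on $(0,4+2k)$, so $f'>0$ throughout $(0,T)$; the non-vanishing of $N'(4+2k)=-2C^3 k(2+k)$ (obtained by substituting $h(C)=0$) then ensures $f$ reaches $\sqrt{4+2k}$ in finite time. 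The main obstacle is the hierarchical sign analysis $h'''\to h''\to h'\to h$ performed in the previous paragraph; once that is in place, the remaining steps follow from Lemma~\refeq{lem_U2Kahler_simpler} and the explicit factorisation above.
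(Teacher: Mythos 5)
Your proposal is correct and, modulo the substitution $C=-\alpha$, it is in substance the paper's own proof: your closing function satisfies $h(C)=\tfrac{1}{8}h_2(-C)$ where $h_2$ is the root function the paper derives, your value $D=8(C-1)/C^3$ is the paper's $D=(8\alpha+8)/\alpha^3$, your $G$ is $-\bar{H_2}$, and the key computations agree (I checked $h'''(C)=(2+k)^2e^{C(2+k)}\bigl((2+k)C+1-k\bigr)$, the Taylor coefficients $(2+k)^{l-1}(l-2-k)/l!$, $h''(C_0)=(2+k)\bigl(k-e^{k-1}\bigr)$, and $N'(4+2k)=-2C^3k(2+k)$; all are right). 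The only real difference is organizational: for the root that produces the soliton ($C<0$, i.e.\ $\alpha>0$) the paper obtains existence by the intermediate value theorem and uniqueness from the Descartes-type observation, borrowed from Cao, that the coefficients of $h_2(\alpha)e^{L\alpha/2}$ change sign at most once, whereas you run the full monotonicity hierarchy $h'''\to h''\to h'\to h$ to count the negative root exactly, and you use the series only to exclude roots with $C>0$ (the paper symmetrically uses the sign of $h_2'''$ on $(-\infty,0)$ for that half-line, and its series argument on the other). Both routes are sound and of comparable length; yours trades the sign-rule for the explicit bookkeeping at the inflection point $C_0=(k-1)/(2+k)$. Your turning-point analysis via $G'(F)=-\tfrac12 C^3e^{-CF/2}F(F-4)$ reproduces the paper's treatment of $\bar{H_2}$ and correctly yields $f'>0$ on $(0,T)$ and finite-time arrival at $\sqrt{4+2k}$ through the simple zero of $N$.

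The one omission worth repairing: uniqueness requires ruling out the other cohomogeneity one action on $\mathbb{CP}^2$ from Table \refeq{tab_compact} (the lift of the $\mathrm{SO}(3)$ action, which has no fixed point), and your proposal silently assumes the fixed-point structure by taking $n=1$ at the $\mathbb{CP}^1$ end. The paper dispatches this in its opening paragraph: for that action the smoothness conditions at the two singular orbits ($n=4$ at one end, $n=2$ with indices swapped at the other) would force $\mathrm{U}(2)$-invariance to identify different pairs of metric functions at the two ends, hence $f_1=f_2=f_3$, so all functions vanish simultaneously and the only possible singular orbits are points --- no $\mathrm{U}(2)$-invariant K\"ahler metric arises from that action. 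Adding this one observation makes your argument a complete proof of Lemma \refeq{lem_kahler_orbifolds_CP1}.
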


\begin{remark}
    The Fubini--Study metric is both the only Einstein metric and the only smooth metric included in Lemma \refeq{lem_kahler_orbifolds_CP1}. 
\end{remark}

\begin{proof}[Proof of Lemma \refeq{lem_kahler_orbifolds_CP1}]
The $\mathrm{SU}(2)$-action on $\mathbb{CP}^2$ that does not have a fixed point does not produce any $\mathrm{U}(2)$-invariant metrics. Indeed, if it were to, then the boundary conditions necessitate $f_1=f_2$ and $f_2=f_3$. In particular $f_1=f_2=f_3$ vanish together so the only possible singular orbits are points.

Therefore the $\mathrm{U}(2)$-invariant K\"ahler metrics on $\mathbb{CP}^2$ have a fixed point, which we reparameterize to be at $t=0$ as in (\refeq{eq_Kahler_orbifold_closingvan}). The Einstein equation now becomes (\refeq{eq_Kahler_Einstein_cons}) with $D_0=0$. This has the unique solution given in Example \refeq{ex_Fubini-Study_2} and is the Fubini study metric on $\mathbb{CP}^2$. In the non-Einstein case, $C = - \alpha$ and $D = (8 \alpha +8)/\alpha^3$ in (\refeq{eq_Kahler_not_Einstein}). The Ricci soliton equation becomes: 
\[(f')^2 = e^{-\alpha f^2/2}\frac{\bar{H_2}(f^2)}{\alpha^3 f^4}, \; \text{ where } \; \bar{H_2}(F) = (8\alpha+8) - e^{\alpha F/2}(\alpha^2F(F-4)-4\alpha(F-2)+8).\] 

As in the proof of Lemma \refeq{lem_kahler_orbifolds}, for all $f^2>0$, $f'=0$ if and only if $\bar{H_2}(f^2)=0$. Again, $\bar{H_2}'(F) = -\alpha^3e^{\alpha F/2}F(F-4)/2$ and $\bar{H_2}(F)$ is non-zero on $(0,4]$ as $\bar{H_2}(0)=0$. Moreover, the expression for $\bar{H_2}'(F)$ also implies there is at most one root of $\bar{H_2}$ for $F>4$. Therefore, the solution for $f$ satisfying the boundary conditions in (\refeq{eq_Kahler_not_Einstein}) exists with $f' \neq 0$ on $(0,T)$ if and only if we can apply the boundary conditions.

Once again, by the intermediate value theorem, if $\alpha \neq 0$, then there exists a root bigger than $4$ for $\bar{H_2}$ if and only if the sign of $\bar{H_2}''(0)$ is opposite that of $\displaystyle \lim_{F \to \infty} \bar{H_2}(F)$. This occurs exactly when $\alpha > -1$.

As $f$ is strictly increasing and the principal orbit is a $3$-sphere, adding a $\mathbb{CP}^1$ singular orbit such that the manifold closes up as an orbifold is equivalent to
\[L:=f(T)^2 = 4+\frac{2}{q} \; \text{ and } \; f'(T) = 0.\]
Therefore, the boundary conditions are satisfied for an non-Einstein soliton if and only if $\alpha$ is a non-zero root of the function:
\[h_2(\alpha) = L(L-4)\alpha^2-4(L-2)\alpha+8 - (8\alpha+8)e^{-\frac{L\alpha}{2}}.\]
We note that $h_2(0) = 0 = h_2'(0)=h_2''(0)$ and $h_2'''(\alpha) = L^2e^{-\frac{L\alpha}{2}}\left(L-6+L\alpha\right)$. Clearly $h_2'''(0) < 0$ for $L \in (0,6)$ with equality at the Einstein case $L=6$. Moreover, $h_2'''(\alpha) < 0$ on $(-\infty,0)$ if $L \in (0,6]$ and $h_2'''(\alpha) > 0$ on $(0,\infty)$ if $L \in [6,\infty)$. Therefore the Fubini--Study metric is unique for its boundary conditions. By the intermediate value theorem, there exists a root to $h_2$ on $(0,\infty)$ if $L \in (4,6)$ since $h_2'''(0)<0$ for all $L<6$ and $\displaystyle \lim_{\alpha \to \infty} h_2(\alpha) = \infty \; \text{ for all } \; L>4$. To get uniqueness, we note that the coefficients of the Taylor series
\[h_2(\alpha)e^{\frac{L\alpha}{2}} = \sum_{m=3}^\infty\frac{L^{m-1}}{m!2^{m-2}}(m-2)\big((L-4)m-L\big)\alpha^m\]
change sign at most once. 
\end{proof}

\begin{remark}
    One also gets non-zero roots of $h_2$ for all $L>6$ since $\displaystyle \lim_{\alpha \to -\infty} h_2(\alpha) = \infty$. By a similar Taylor series expansion argument for $\alpha<0$, it is seen that each non-zero root is unique for the choice of $L$. This allows one to take $L = 4 + {n}/{q}$ for any $n\in \mathbb{Z}_{>0}$, generating orbifold metrics on $G_n$. The space $G_n$ is constructed by adding a point to $\mathcal{O}(-n)$ at infinity. These metrics have orbifold singularities along the $\mathbb{CP}^1$ orbit and also an isolated orbifold point, generalizing the metrics found in \cite{FIK2003}. The only Einstein metrics constructed with this procedure are orbifold quotients of the Fubini--Study metric.  
\end{remark}

\begin{theorem}
    \label{thm_kahler_orbifolds_classification}
    Let $M$ be a $4$-dimensional simply-connected smooth manifold admitting a cohomogeneity one action by both $\mathrm{SU}(2) \subset \mathrm{U}(2)$ with a non-principal orbit. Every complete shrinking gradient K\"ahler Ricci soliton on $M$ invariant under these actions, potentially with orbifold singularities along non-principal orbits, is a rescaling of one of the following:
    \begin{enumerate}
        \item A metric from Lemma \refeq{lem_kahler_orbifolds} on $M_n$;
        \item A metric from Lemma \refeq{lem_kahler_orbifolds_noncompact} on $\mathcal{O}(-n)$;
        \item A metric from Lemma \refeq{lem_kahler_orbifolds_CP1} on $\mathbb{CP}^2$; or
        \item The shrinking Gaussian soliton on $\mathbb{C}^2$.
    \end{enumerate}
\end{theorem}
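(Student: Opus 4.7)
The plan is to apply Lemma \refeq{lem_U2Kahler_simpler} to reduce the classification to an enumeration of monotone solutions $f: I \to \mathbb{R}_{\geq 0}$ of the ODE (\refeq{eq_Kahler_Einstein_cons}) (when $C = 0$) or (\refeq{eq_Kahler_not_Einstein}) (when $C \neq 0$), subject to the orbifold closing conditions (\refeq{eq_Kahler_orbifold_closingvan})--(\refeq{eq_Kahler_orbifold_closingdec}) at every endpoint of $I$ that lies at finite distance. The enumeration is then carried out by analyzing the possible endpoint behaviors and matching them to the construction lemmas already proved in this subsection.

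First I would establish that at each endpoint of $I$, the closing conditions and completeness force exactly one of three behaviors: (i) $f \to 0$ with $|f'| \to 1$, producing a smooth fixed-point singular orbit whose principal orbit is $\mathbb{S}^3$; (ii) $f$ attains a positive value with $f' = 0$, producing an orbifold $\mathbb{CP}^1$ singular orbit with cone angle $2\pi/q$ determined via $f(\cdot)^2 = 4 \pm 2n/q$; or (iii) $f \to \infty$, which is the only completable asymptotic at a non-compact end by the paragraph preceding Lemma \refeq{lem_kahler_orbifolds_noncompact}. Monotonicity of $f$ on $I$ excludes two endpoints of type (i), and the hypothesis that $M$ admits a non-principal orbit excludes two endpoints of type (iii). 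The three surviving pairings are then handled directly: (i)+(ii) forces $M = \mathbb{CP}^2$ (possibly with orbifold singularity along the $\mathbb{CP}^1$), and is covered by Lemma \refeq{lem_kahler_orbifolds_CP1}; (ii)+(ii) forces $M = M_n$ and is covered by Lemma \refeq{lem_kahler_orbifolds}; (ii)+(iii) forces $M = \mathcal{O}(-n)$ and is covered by Lemma \refeq{lem_kahler_orbifolds_noncompact}.

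The remaining case (i)+(iii) forces $M = \mathbb{C}^2$, and showing that the only such metric is the Gaussian soliton is where the real work lies. I would impose $f(0) = 0$, $f'(0) = 1$ in (\refeq{eq_Kahler_not_Einstein}) and expand its right-hand side in powers of $f$ near $0$; cancellation of the $f^{-4}$ singularity pins the integration constant to $D = 8(C-1)/C^3$, after which the lower-order terms match automatically. The substitution $s = Cf^2/2$ then rewrites the equation as
\[
(f')^2 f^4 = C^{-3}\bigl[4s^2 + 8(C-1)(e^s - 1 - s)\bigr].
\]
A case split on $C$ shows that $C < 0$ and $0 < C < 1$ both drive the right-hand side negative at some finite $f$, while $C > 1$ makes the radial integral $t = \int df/f'$ converge as $f \to \infty$; each outcome contradicts completeness on $\mathbb{C}^2$. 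Only $C = 1$ survives, giving $D = 0$ and $(f')^2 \equiv 1$, i.e., $f(t) = t$ and the Gaussian metric. The Einstein alternative $C = 0$ is excluded because (\refeq{eq_Kahler_Einstein_cons}) with $f(0) = 0$ forces $D_0 = 0$, and the resulting $(f')^2 = 1 - f^2/6$ vanishes at $f = \sqrt{6}$, preventing $f \to \infty$.

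The main obstacle is this last $\mathbb{C}^2$ case analysis; the existence and uniqueness for all other admissible topologies is already packaged in Lemmas \refeq{lem_kahler_orbifolds}, \refeq{lem_kahler_orbifolds_noncompact}, and \refeq{lem_kahler_orbifolds_CP1}, so once the endpoint classification and the Gaussian uniqueness are settled, the theorem assembles by citation.
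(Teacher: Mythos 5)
Your proposal is correct and takes essentially the same route as the paper: Lemma \refeq{lem_U2Kahler_simpler} reduces everything to the scalar ODE, Lemmas \refeq{lem_kahler_orbifolds}, \refeq{lem_kahler_orbifolds_noncompact} and \refeq{lem_kahler_orbifolds_CP1} absorb the $M_n$, $\mathcal{O}(-n)$ and $\mathbb{CP}^2$ cases, and the only fresh work is on $\mathbb{C}^2$, where your analysis (with $C=-\alpha$, your $D = 8(C-1)/C^3$ is the paper's $D=(8\alpha+8)/\alpha^3$) reproduces exactly the paper's dichotomy --- noncompact iff $\alpha \leq -1$, incomplete for $\alpha<-1$ via $\lim_{f\to\infty} f^4e^{\alpha f^2/2}(f')^2 = -8\alpha-8>0$, and the Gaussian $(f')^2\equiv 1$ at $\alpha=-1$. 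Your endpoint-pairing bookkeeping is just a repackaging of the paper's appeal to Tables \refeq{tab_compact} and \refeq{tab_non_compact} together with its exclusion (via the proof of Lemma \refeq{lem_kahler_orbifolds_CP1}) of the two-fixed-point action and of the $\mathrm{SU}(2)$ actions with different $\mathrm{U}(1)$ singular isotropies, so the content coincides.
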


\begin{proof}
    We note that the $\mathrm{SU}(2)$ actions with different $\mathrm{U}(1)$ singular isotropies cannot produce $\mathrm{U}(2)$-invariant metrics as described in the proof of Lemma \refeq{lem_kahler_orbifolds_CP1}. Also as described above, a two fixed point action does not produce K\"ahler gradient Ricci solitons. Following Table \refeq{tab_compact}, all the compact examples are treated in Lemma \refeq{lem_kahler_orbifolds} and Lemma \refeq{lem_kahler_orbifolds_CP1}. 

    In the non-compact case, following Table \refeq{tab_non_compact}, by Lemma \refeq{lem_kahler_orbifolds_noncompact} it only remains to consider the $\mathrm{U}(2)$-action on $\mathbb{C}^2$. This again gives the initial conditions in (\refeq{eq_Kahler_orbifold_closingvan}) and by the proof of Lemma \refeq{lem_kahler_orbifolds_CP1}, the metric is over a non-compact space exactly when $\alpha \leq -1$. If $\alpha <-1$, then
    \[\lim_{f \to \infty} f^4e^{\frac{\alpha}{2}f^2}(f')^2 = -8\alpha - 8>0,\]
    and the metric is incomplete. Finally if $\alpha = -1$, then $(f')^2 = 1$ and the metric is the Gaussian shrinker is given by $f(t)=t$.
\end{proof}

Theorem \refeq{thm_Kahlerclassification} is therefore proven as $\mathbb{R}\times \mathbb{S}^3$ does not admit a complete K\"ahler metric (since they are connected at infinity). 

\begin{remark} 
    Since $\mathrm{U}(2)/\{\pm I\}$ is isomorphic to $\mathrm{SO}(3) \times \mathrm{SO}(2)$, a classification of all the $\mathrm{U}(2)$-invariant gradient K\"ahler Ricci soliton orbifolds also includes the $\mathrm{SO}(3) \times \mathrm{SO}(2)$ actions. In particular, it would remain to consider the three $\mathrm{SO}(3) \times \mathrm{SO}(2)$ actions on $\mathbb{CP}^1 \times \mathbb{CP}^1$, $\mathbb{S}^4$ and $\mathbb{C}\times \mathbb{CP}^1$. The remaining smooth examples are the direct Riemannian sum of Fubini--Study metrics on $\mathbb{CP}^1 \times \mathbb{CP}^1$ and the shrinking cylinder $\mathbb{C}\times\mathbb{CP}^1$. However, $\mathrm{SO}(3) \times \mathrm{SO}(2)$ actions are is not within the scope of this work --- we focus on $\mathrm{SU}(2)$ actions and construct the non-smooth metrics in Theorem \refeq{thm_Kahlerclassification} as an aside when generating Figure \refeq{fig_compact_solitons_orbifolds}.
\end{remark}

\printbibliography

@article{AngenentKnopf2022,
  author = {Angenent, Sigurd B.
  and Knopf, Dan},
  title = {Ricci solitons, conical singularities, and nonuniqueness},
  journal = {Geometric and Functional Analysis},
  year = {2022},
  month = {06},
  day = {01},
  volume = {32},
  number = {3},
  pages = {411-489},
  issn={1420-8970},
  doi={10.1007/s00039-022-00601-y},
  url={https://doi.org/10.1007/s00039-022-00601-y}
}

@article{ArroyoLafuente2014,
  author = {Arroyo, Romina M. 
  and Lafuente, Ramiro},
  title = {Homogeneous Ricci solitons in low dimensions},
  journal = {International Mathematics Research Notices},
  volume = {2015},
  number = {13},
  pages = {4901-4932},
  year = {2014},
  month = {06},
  issn = {1073-7928},
  doi = {10.1093/imrn/rnu088},
  url = {https://doi.org/10.1093/imrn/rnu088}
}

@misc{Appleton2022,
  title = {A family of non-collapsed steady Ricci solitons in even dimensions greater or equal to four}, 
  author = {Alexander Appleton},
  year = {2022},
  eprint = {1708.00161},
  archivePrefix = {arXiv},
  primaryClass = {math.DG},
  url={https://arxiv.org/abs/1708.00161},
}

@misc{BamlerChen2025,
  title = {Degree theory for 4-dimensional asymptotically conical gradient expanding solitons}, 
  author = {Richard H. Bamler 
  and Eric Chen},
  year = {2025},
  eprint = {2305.03154},
  archivePrefix = {arXiv},
  primaryClass = {math.DG},
  url={https://arxiv.org/abs/2305.03154} 
}

@article{BCCD2024,
  author = {Bamler, Richard H.
  and Cifarelli, Charles
  and Conlon, Ronan J.
  and Deruelle, Alix},
  title = {A new complete two-dimensional shrinking gradient K{\"a}hler-Ricci soliton},
  journal = {Geometric and Functional Analysis},
  year = {2024},
  month = {04},
  day = {01},
  volume = {34},
  number = {2},
  pages = {377-392},
  issn={1420-8970},
  doi={10.1007/s00039-024-00668-9},
  url={https://doi.org/10.1007/s00039-024-00668-9}
}

@article{BDGW2015,
  author = {Buzano, Maria
  and Dancer, Andrew S.
  and Gallaugher, Michael
  and Wang, McKenzie},
  title = {Non-K{\"a}hler expanding Ricci solitons, Einstein metrics, and exotic cone structures},
  journal = {Pacific Journal of Mathematics},
  year = {2015},
  month = {12},
  day = {23},
  volume = {273},
  number = {2},
  pages = {369-394},
  issn = {1945-5844},
  doi = {10.2140/pjm.2015.273.369},
  url = {https://doi.org/10.2140/pjm.2015.273.369}
}

@book{Besse1987,
  author = {Besse, Arthur L.},
  publisher = {Springer Berlin, Heidelberg},
  title = {Einstein Manifolds},
  year = {1978}
}

@article{BernsteinMettler2015,
  author = {Bernstein, Jacob 
  and Mettler, Thomas},
  title = {Two-dimensional gradient Ricci solitons revisited},
  journal = {International Mathematics Research Notices},
  volume = {2015},
  number = {1},
  pages = {78-98},
  year = {2013},
  month = {09},
  issn = {1073-7928},
  doi = {10.1093/imrn/rnt177},
  url = {https://doi.org/10.1093/imrn/rnt177}
}

@article{Brendle2013,
  author = {Brendle, Simon},
  title = {Rotational symmetry of self-similar solutions to the Ricci flow},
  journal = {Inventiones mathematicae},
  year = {2013},
  month = {12},
  day = {01},
  volume = {194},
  number = {3},
  pages = {731-764},
  issn={1432-1297},
  doi={10.1007/s00222-013-0457-0},
  url={https://doi.org/10.1007/s00222-013-0457-0}
}

@article{BrendleSchoen2008,
  author = {Brendle, Simon
  and Schoen, Richard M.},
  title = {Classification of manifolds with weakly 1/4-pinched curvatures},
  volume = {200},
  journal = {Acta Mathematica},
  number = {1},
  publisher = {Institut Mittag-Leffler},
  pages = {1-13},
  year = {2008},
  doi = {10.1007/s11511-008-0022-7},
  URL = {https://doi.org/10.1007/s11511-008-0022-7}
}

@article{Buttsworth2022,
  author = {Buttsworth, Timothy},
  title = {$SO(2)\times SO(3)$-invariant Ricci solitons and ancient flows on $\mathbb{S}^4$},
  journal = {Journal of the London Mathematical Society},
  volume = {106},
  number = {2},
  pages = {1098-1130},
  doi = {https://doi.org/10.1112/jlms.12593},
  url = {https://londmathsoc.onlinelibrary.wiley.com/doi/abs/10.1112/jlms.12593},
  year = {2022}
}

@article{Buttsworth2025,
  author = {Buttsworth, Timothy},
  title = {$SU(2)$-invariant steady gradient Ricci solitons on four-manifolds},
  journal = {Communications in Analysis and Geometry},
  year = {in press, 2025},
  volume = {33},
  number = {1}
}

@article{Buzano2011,
  title = {Initial value problem for cohomogeneity one gradient Ricci solitons},
  journal = {Journal of Geometry and Physics},
  volume = {61},
  number = {6},
  pages = {1033-1044},
  year = {2011},
  issn = {0393-0440},
  doi = {https://doi.org/10.1016/j.geomphys.2011.02.003},
  url = {https://www.sciencedirect.com/science/article/pii/S0393044011000301},
  author = {Maria Buzano}
}

@article{BohmWilking2008,
  title = {Manifolds with positive curvature operators are space forms},
  author = {Christoph B{\"o}hm 
  and Burkhard Wilking},
  journal = {Annals of Mathematics},
  year = {2008},
  month = {05},
  volume = {167},
  number = {3},
  pages = {1079-1097},
  doi = {https://doi.org/10.4007/annals.2008.167.1079}
}

@incollection{Cao1996,
  title = {Existence of gradient Kahler-Ricci solitons}, 
  author = {Huai-Dong Cao},
  pages = {1-16},
  crossref = {Chow1996}
}

@book{Chow1996,
  author = {Chow, Bennett},
  title = {Elliptic and parabolic methods in geometry},
  booktitle = {Elliptic and parabolic methods in geometry},
  year = {1996},
  publisher = {A K Peters},
  address = {Wellesley, MA},
  isbn = {1568810644}
}

@article{Cao1997,
  author = {Huai-Dong Cao},
  title = {Limits of solutions to the K{\"a}hler-Ricci flow},
  volume = {45},
  journal = {Journal of Differential Geometry},
  number = {2},
  publisher = {Lehigh University},
  pages = {257-272},
  year = {1997},
  doi = {10.4310/jdg/1214459797},
  URL = {https://doi.org/10.4310/jdg/1214459797}
}

@book{CCG2007,
  author = {Bennett Chow
  and Sun-Chin Chu
  and David Glickenstein
  and Christine Guenther
  and James Isenberg
  and Tom Ivey
  and Dan Knopf
  and Peng Lu
  and Feng Luo
  and Lei Ni},
  title={The Ricci Flow: Techniques and Applications: Part I: Geometric Aspects},
  series = {Mathematical Surveys and Monographs},
  number = {135},
  year={2007},
  publisher={American Mathematical Society},
  address={Providence, RI},
  isbn={978-0-8218-3946-1}
}

@article{Chodosh2014,
  author = {Chodosh, Otis},
  title = {Expanding Ricci solitons asymptotic to cones},
  journal = {Calculus of Variations and Partial Differential Equations},
  year = {2014},
  month = {09},
  day = {01},
  volume = {51},
  number = {1},
  pages = {1-15},
  issn = {1432-0835},
  doi = {10.1007/s00526-013-0664-y},
  url = {https://doi.org/10.1007/s00526-013-0664-y}
}

@article{ChaveValent1996,
  title = {On a class of compact and non-compact quasi-Einstein metrics and their renormalizability properties},
  journal = {Nuclear Physics B},
  volume = {478},
  number = {3},
  pages = {758-778},
  year = {1996},
  issn = {0550-3213},
  doi = {https://doi.org/10.1016/0550-3213(96)00341-0},
  url = {https://www.sciencedirect.com/science/article/pii/0550321396003410},
  author = {Thierry Chave and Galliano Valent},
}

@article{Dammerman2009,
  title = {Diagonalizing cohomogeneity-one Einstein metrics},
  journal = {Journal of Geometry and Physics},
  volume = {59},
  number = {9},
  pages = {1271-1284},
  year = {2009},
  issn = {0393-0440},
  doi = {https://doi.org/10.1016/j.geomphys.2009.06.010},
  url = {https://www.sciencedirect.com/science/article/pii/S0393044009001053},
  author = {Brandon Dammerman}
}

@article{Deruelle2016,
  author = {Deruelle, Alix},
  title = {Smoothing out positively curved metric cones by Ricci expanders},
  journal = {Geometric and Functional Analysis},
  year = {2016},
  month = {02},
  day = {01},
  volume = {26},
  number = {1},
  pages = {188-249},
  issn = {1420-8970},
  doi = {10.1007/s00039-016-0360-0},
  url = {https://doi.org/10.1007/s00039-016-0360-0}
}

@article{DHW2013,
  author = {Andrew S. Dancer 
  and Stuart J. Hall 
  and McKenzie Y. Wang},
  title = {Cohomogeneity one shrinking Ricci solitons: An analytic and numerical study},
  volume = {17},
  journal = {Asian Journal of Mathematics},
  number = {1},
  publisher = {International Press of Boston},
  pages = {33-62},
  year = {2013}
}

@article{DancerStrachan1994,
  author = {Dancer, Andrew S. 
  and Strachan, Ian A. B.},
  year = {1994},
  month = {05},
  pages = {513-525},
  title = {K{\"a}hler-Einstein metrics with SU(2) action},
  volume = {115},
  journal = {Mathematical Proceedings of the Cambridge Philosophical Society},
  doi = {10.1017/S0305004100072273}
}

@article{DancerWang2011,
  author = {Dancer, Andrew S.
  and Wang, McKenzie Y.},
  title = {On Ricci solitons of cohomogeneity one},
  journal = {Annals of Global Analysis and Geometry},
  year = {2011},
  month = {03},
  day = {01},
  volume = {39},
  number = {3},
  pages = {259-292},
  issn = {1572-9060},
  doi = {10.1007/s10455-010-9233-1},
  url = {https://doi.org/10.1007/s10455-010-9233-1}
}

@article{FIK2003,
  author = {Mikhail Feldman 
  and Tom Ilmanen 
  and Dan Knopf},
  title = {Rotationally symmetric shrinking and expanding gradient K{\"a}hler-Ricci solitons},
  volume = {65},
  journal = {Journal of Differential Geometry},
  number = {2},
  publisher = {Lehigh University},
  pages = {169-209},
  year = {2003},
  doi = {10.4310/jdg/1090511686},
  URL = {https://doi.org/10.4310/jdg/1090511686}
}

@inProceedings{GastelKronz2004,
  author = {Gastel, Andreas
  and Kronz, Manfred},
  title = {A family of expanding Ricci solitons},
  booktitle = {Variational Problems in Riemannian Geometry},
  series = {Progress in Nonlinear Differential Equations and Their Applications},
  number = {59},
  year = {2004},
  publisher = {Birkh{\"a}user Basel},
  address = {Basel},
  pages = {81-93},
  isbn = {978-3-0348-7968-2}
}

@article{GibbonsPope1979,
  author = {Gibbons, G. W.
  and Pope, C. N.},
  title = {The Positive Action conjecture and asymptotically Euclidean metrics in quantum gravity},
  journal = {Communications in Mathematical Physics},
  year = {1979},
  month = {10},
  day = {01},
  volume = {66},
  number = {3},
  pages = {267-290},
  issn = {1432-0916},
  doi = {10.1007/BF01197188},
  url = {https://doi.org/10.1007/BF01197188}
}

@article{GianniotisSchulze2018,
  author = {Panagiotis Gianniotis 
  and Felix Schulze},
  title = {Ricci flow from spaces with isolated conical singularities},
  volume = {22},
  journal = {Geometry and Topology},
  number = {7},
  publisher = {MSP},
  pages = {3925-3977},
  keywords = {conical singularities, Ricci flow, Singular initial data},
  year = {2018},
  doi = {10.2140/gt.2018.22.3925},
  URL = {https://doi.org/10.2140/gt.2018.22.3925}
}

@article{GWZ2008,
  author = {Karsten Grove 
  and Burkhard Wilking 
  and Wolfgang Ziller},
  title = {Positively curved cohomogeneity one manifolds and 3-Sasakian geometry},
  volume = {78},
  journal = {Journal of Differential Geometry},
  number = {1},
  publisher = {Lehigh University},
  pages = {33-111},
  year = {2008},
  doi = {10.4310/jdg/1197320603},
  URL = {https://doi.org/10.4310/jdg/1197320603}
}

@article{GroveZiller2002,
  author = {Grove, Karsten
  and Ziller, Wolfgang},
  title = {Cohomogeneity one manifolds with positive Ricci curvature},
  journal = {Inventiones mathematicae},
  year = {2002},
  month = {09},
  day = {01},
  volume = {149},
  number = {3},
  pages = {619-646},
  issn = {1432-1297},
  doi = {10.1007/s002220200225},
  url = {https://doi.org/10.1007/s002220200225}
}

@article{Hamilton1986,
  author = {Richard S. Hamilton},
  title = {Four-manifolds with positive curvature operator},
  volume = {24},
  journal = {Journal of Differential Geometry},
  number = {2},
  publisher = {Lehigh University},
  pages = {153-179},
  year = {1986},
  doi = {10.4310/jdg/1214440433},
  URL = {https://doi.org/10.4310/jdg/1214440433}
}

@article{Hoelscher2008,
  author = {Hoelscher, Corey},
  year = {2010},
  month = {05},
  pages = {129-185},
  title = {Classification of cohomogeneity one manifolds in low dimensions},
  volume = {246},
  number = {1},
  journal = {Pacific Journal of Mathematics},
  doi = {10.2140/pjm.2010.246.129}
}

@article{Ivey1994,
  title = {New examples of complete Ricci solitons},
  author = {Thomas A. Ivey},
  year = {1994},
  month = {9},
  doi = {10.2307/2160866},
  volume = {122},
  number = {1},
  pages = {241-245},
  journal = {Proceedings of the American Mathematical Society}
}

@article{Jensen1969,
  author = {Gary R. Jensen},
  title = {Homogeneous Einstein spaces of dimension four},
  volume = {3},
  journal = {Journal of Differential Geometry},
  number = {3-4},
  publisher = {Lehigh University},
  pages = {309-349},
  year = {1969},
  doi = {10.4310/jdg/1214429056},
  URL = {https://doi.org/10.4310/jdg/1214429056}
}

@incollection{Koiso1990,
  title = {On rotationally symmetric Hamilton's equation for K{\"a}hler-Einstein metrics},
  booktitle = {Recent Topics in Differential and Analytic Geometry},
  publisher = {Academic Press},
  pages = {327-337},
  year = {1990},
  volume = {18},
  series = {Advanced Studies in Pure Mathematics},
  isbn = {978-0-12-001018-9},
  doi = {https://doi.org/10.1016/B978-0-12-001018-9.50015-4},
  url = {https://www.sciencedirect.com/science/article/pii/B9780120010189500154},
  address = {Cambridge, MA},
  author = {Norihito Koiso}
}

@misc{Liu2023,
  title = {Toric Einstein 4-manifolds with nonnegative sectional curvature}, 
  author = {Tianyue Liu},
  year = {2023},
  eprint = {2311.04840},
  archivePrefix = {arXiv},
  primaryClass = {math.DG},
  url = {https://arxiv.org/abs/2311.04840}, 
}

@article{Mostert1957,
  ISSN = {0003486X, 19398980},
  URL = {http://www.jstor.org/stable/1970056},
  author = {Paul S. Mostert},
  journal = {Annals of Mathematics},
  number = {3},
  pages = {447-455},
  publisher = {[Annals of Mathematics, Trustees of Princeton University on Behalf of the Annals of Mathematics, Mathematics Department, Princeton University]},
  title = {On a compact Lie group acting on a manifold},
  urldate = {2025-03-05},
  volume = {65},
  year = {1957}
}

@article{MunteanuWang2011,
  author = {Munteanu, Ovidiu 
  and Wang, Jiaping},
  year = {2012},
  month = {3},
  pages = {55-94},
  title = {Analysis of weighted Laplacian and applications to Ricci solitons},
  volume = {20},
  number = {1},
  journal = {Communications in Analysis and Geometry},
  doi = {10.4310/CAG.2012.v20.n1.a3}
}

@article{NienhausWink2024,
  author = {Nienhaus, Jan
  and Wink, Matthias},
  title = {New expanding Ricci solitons starting in dimension four},
  journal = {The Journal of Geometric Analysis},
  year = {2024},
  month = {08},
  day = {31},
  volume = {34},
  number = {11},
  pages = {327},
  issn = {1559-002X},
  doi = {10.1007/s12220-024-01778-4},
  url = {https://doi.org/10.1007/s12220-024-01778-4}
}

@article{Page1978,
  title = {A compact rotating gravitational instanton},
  journal = {Physics Letters B},
  volume = {79},
  number = {3},
  pages = {235-238},
  year = {1978},
  issn = {0370-2693},
  doi = {https://doi.org/10.1016/0370-2693(78)90231-9},
  url = {https://www.sciencedirect.com/science/article/pii/0370269378902319},
  author = {Don N Page}
}

@article{Pedersen1985,
  doi = {10.1088/0264-9381/2/4/022},
  url = {https://dx.doi.org/10.1088/0264-9381/2/4/022},
  year = {1985},
  month = {07},
  volume = {2},
  number = {4},
  pages = {579-587},
  author = {H Pedersen},
  title = {Eguchi-Hanson metrics with cosmological constant},
  journal = {Classical and Quantum Gravity}
}

@misc{Perelman2002,
  title = {The entropy formula for the Ricci flow and its geometric applications}, 
  author = {Grisha Perelman},
  year = {2002},
  eprint = {math/0211159},
  archivePrefix = {arXiv},
  primaryClass = {math.DG},
  url = {https://arxiv.org/abs/math/0211159} 
}

@article{PagePope1987,
  doi = {10.1088/0264-9381/4/2/005},
  url = {https://dx.doi.org/10.1088/0264-9381/4/2/005},
  year = {1987},
  month = {03},
  volume = {4},
  number = {2},
  pages = {213-225},
  author = {Don N Page and C N Pope},
  title = {Inhomogeneous Einstein metrics on complex line bundles},
  journal = {Classical and Quantum Gravity}
}

@article{PetersenWylie2007,
  author = {Petersen, Peter 
  and Wylie, William},
  year = {2009},
  month = {01},
  day = {22},
  title = {On gradient Ricci solitons with symmetry},
  volume = {137},
  number={6},
  pages={2085-2092},
  journal = {Proceedings of the American Mathematical Society},
  doi = {10.1090/S0002-9939-09-09723-8}
}

@article{PetersenZhu1995,
  issn = {00222518, 19435258},
  url = {http://www.jstor.org/stable/24898606},
  author = {Peter Petersen 
  and Shunhui Zhu},
  journal = {Indiana University Mathematics Journal},
  number = {2},
  pages = {451-465},
  publisher = {Indiana University Mathematics Department},
  title = {U(2) invariant four dimensional Einstein metrics},
  urldate = {2025-03-05},
  volume = {44},
  year = {1995}
}

@article{Thompson2024,
  author = {Thompson, Adam},
  title = {Inhomogeneous deformations of Einstein solvmanifolds},
  journal = {Journal of the London Mathematical Society},
  volume = {109},
  number = {5},
  pages = {e12904},
  doi = {https://doi.org/10.1112/jlms.12904},
  url = {https://londmathsoc.onlinelibrary.wiley.com/doi/abs/10.1112/jlms.12904},
  year = {2024}
}

@article{VerdianiZiller2022,
  author = {Verdiani, L.
  and Ziller, W.},
  title = {Smoothness conditions in cohomogeneity one manifolds},
  journal = {Transformation Groups},
  year = {2022},
  month = {Mar},
  day = {01},
  volume = {27},
  number = {1},
  pages = {311-342},
  abstract = {We present an efficient method for determining the conditions that a metric on a cohomogeneity one manifold, defined in terms of functions on the regular part, needs to satisfy in order to extend smoothly to the singular orbit.},
  issn = {1531-586X},
  doi = {10.1007/s00031-020-09618-9},
  url = {https://doi.org/10.1007/s00031-020-09618-9}
}

@article{Wink2019,
  author = {Wink, Matthias},
  journal = {International Mathematics Research Notices}, 
  title = {Complete Ricci solitons via estimates on the soliton potential}, 
  year = {2019},
  volume = {2021},
  number = {2},
  pages = {4487-4521},
  doi = {10.1093/imrn/rnaa147}
}

@article{Wink2024,
  title = {Cohomogeneity one Ricci solitons from Hopf fibrations},
  author = {Matthias Wink},
  journal = {Communications in Analysis and Geometry},
  year = {2024},
  month = {01},
  day = {04},
  volume = {31},
  number = {3},
  pages = {625-676},
  doi = {10.4310/CAG.2023.v31.n3.a4}
}

\pagebreak

\section{Appendix: additional figures for compact shrinkers}

We give the $2$-dimensional slices of the $3$-dimensional heatmap produced by $SOL(0.001,-20,\cdot)$ that best highlight the known compact gradient solitons. We list these in the same order as in Examples \refeq{ex_round_metric_1}-\refeq{ex_sum_of_round}.

\begin{figure}[h!]
    \caption{Ricci solitons on $\mathbb{S}^4$, $\mathbb{CP}^2$ and $M_n$}
    \includegraphics[scale = 0.38]{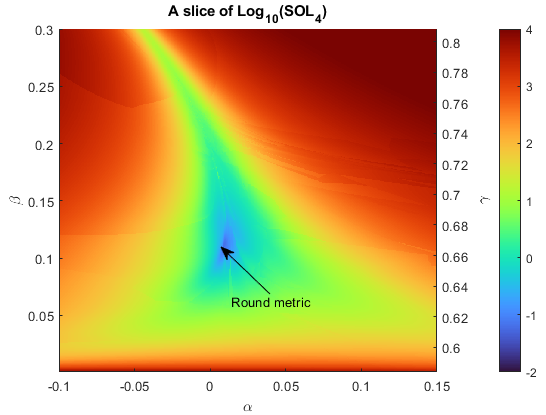}
    \includegraphics[scale = 0.51]{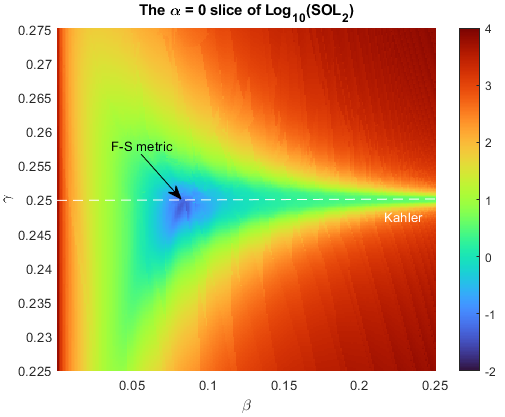}\newline
    \noindent \includegraphics[scale = 0.54]{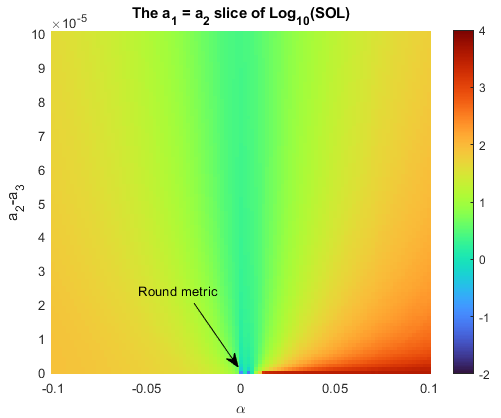}
    \includegraphics[scale = 0.54]{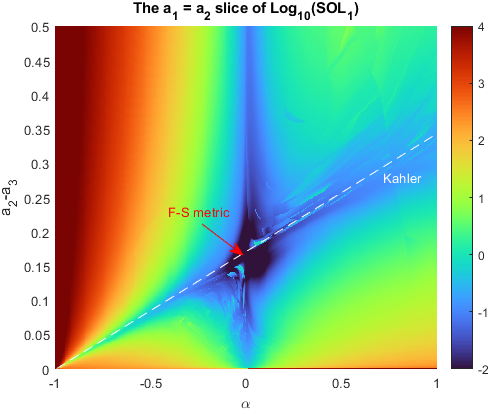} \newline
    \noindent \includegraphics[scale = 0.52]{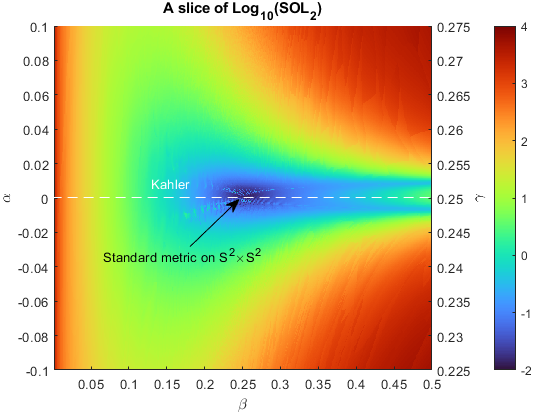}
    \includegraphics[scale = 0.52]{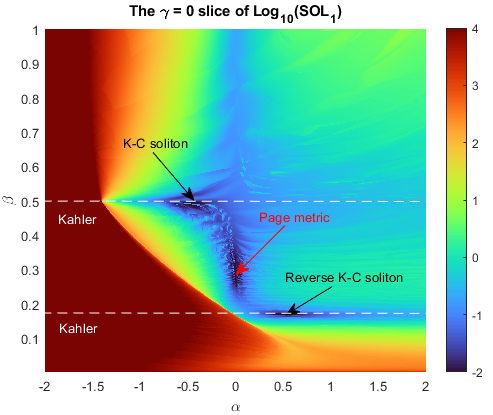}
    \label{fig_1_1}
\end{figure}

\end{document}